\newtheorem{theorem}{Theorem}[section]
\newtheorem{lemma}[theorem]{Lemma}
\newtheorem{proposition}[theorem]{Proposition}
\newtheorem{definition}[theorem]{Definition}
\theoremstyle{remark}
\newtheorem{remark}{Remark}[section]
\theoremstyle{definition}
\newtheorem{example}{Example}[section]
\newcommand{\Z}{\mathbb{Z}}
\newcommand{\R}{\mathbb{R}}
\newcommand{\Rn}{\R^n}
\newcommand{\bO}{\mathcal{O}}
\newcommand{\bq}{\begin{equation}}
\newcommand{\eq}{\end{equation}}
\newcommand{\boundary}{\partial \Omega}
\newcommand{\norm}[1]{\Vert#1\Vert}
\newcommand{\abs}[1]{\vert#1\vert}
\DeclareMathOperator{\grad}{\nabla}
\DeclareMathOperator{\argmin}{{argmin}}
\DeclareMathOperator{\argmax}{{argmax}}
\newcommand{\widthtwofigures}{0.48\textwidth}
\newcommand{\widththreefigures}{0.31\textwidth}
\newcommand{\xs}{x^*}
\newcommand{\vs}{v^*}
\begin{document}

\title[star-shaped envelope]
{A Partial Differential Equation Obstacle Problem for the Level Set Approach to Visibility}

\author{Adam M. Oberman}
\address{Department of Mathematics and Statistics, McGill University, 805 Sherbrooke Street West,
Montreal, Quebec, H3A 0G4, Canada ({\tt adam.oberman@mcgill.ca}).}

\author{Tiago Salvador}
\address{Department of Mathematics, University of Michigan, 530 Church St. Ann Arbor, MI 48105 ({\tt saldanha@umich.edu}).}

\date{\today}

\keywords{Visibility, Level set method, Viscosity solutions, Fast sweeping method}


\begin{abstract}
In this article we consider the problem of finding the visibility set from a given point when the obstacles are represented as the level set of a given function.
Although the visibility set can be computed efficiently by ray tracing, there are advantages to using a level set representation for the obstacles, and to characterizing the solution using a Partial Differential Equation (PDE).
A nonlocal PDE formulation was proposed in Tsai et. al. (Journal of Computational Physics 199(1):260-290, 2004) \cite{TsaiVisibilityPDE}: in this article we propose a simpler PDE formulation, involving a nonlinear  obstacle problem.  We present a simple numerical scheme and show its convergence using the framework of Barles and Souganidis. Numerical examples in both two and three dimensions are presented.
\end{abstract}

\maketitle

%

\section{Introduction}

In this article we consider the problem of finding the visibility set from a given viewpoint given a set of known obstacles using a Partial Differential Equation (PDE).
In principle, the visibility set is simply given by ray tracing and there are numerous algorithms for solving the visibility problem using explicit representations of the obstacles \cite{Coorg1997,Durand2000,AgarwalRayShooting2D,AgarwalRayShooting3D}.

Finding the visibility set plays a crucial role in numerous  applications including rendering, visualization \cite{VisualizationReference}, etching \cite{EtchingReference}, surveillance, exploration \cite{TsaiInformationVisibility}, navigation \cite{LandaTsaiVisibilityPointClouds}, and inverse problems, to only name a few.
Specifically, in \cite{TsaiVisibilityExtensions} the level set framework \cite{OSnum,SethianBook} developed in \cite{TsaiVisibilityPDE} was extended to deal with the optimal placing of a single viewer or a group of viewers and $A$-to-$B$ optimal path planning, where optimality is measured in terms of the volume of the visible region.
More recently, in \cite{TsaiDeepLearning} a convolutional neural network is proposed to determine the vantage points that maximize visibility in the context of surveillance and exploration, with the visibility sets of the training data being computed efficiently using the PDE formulation introduced in \cite{TsaiVisibilityPDE}.
For applications which involve optimization of the viewpoint, the discontinuity of the visibility can make optimization more difficult.
The advantage of using level set/PDE methods is the improved regularity of the solution.

It is clear then that the ultimate goal of the work is inverse problems involving visibility.
As is the case with inverse problems, a better understanding of the forward problem is essential for better results of the more challenging inverse problem.
In this work, we focus our attention in the forward problem and do not go further and study the inverse problem.
We propose a simple formulation of the visibility problem - the visibility set is the subzero level set of the solution of a nonlinear obstacle problem.

In \cite{TsaiVisibilityPDE} the visibility problem is presented as a boundary value problem for a first order differential equation: the visibility set to a given viewpoint $\xs$ is given by $\{\psi(x) \geq 0\}$ where the function $\psi(x)$ is the solution of
\bq\label{PDE:Tsai}
\grad \psi \cdot \frac{x-\xs}{\abs{x-\xs}} = \min\left\{H(\psi-g)\grad g \cdot \frac{x-\xs}{\abs{x-\xs}},0\right\}
\eq
with $\psi(\xs) = g(\xs)$.
Here $H(z) = \chi_{[0,\infty)}(z)$ is the characteristic function of $[0,\infty)$ and $g$ is a signed distance function to the obstacles, positive outside the obstacles and negative inside.
Despite the complex nature of the operator in \eqref{PDE:Tsai}, in \cite{TsaiPDEProof} the visibility function $\psi$ is shown to be the viscosity solution of an equivalent Hamilton-Jacobi type equation involving jump discontinuities in the Hamiltonian.
A numerical scheme to solve this equation is presented and its convergence is established.


For our formulation, $g$ is still a signed distance function to the obstacles, but is instead negative outside and positive inside. Then the visibility set is given as $\{u(x) \leq 0\}$ where the function $u(x)$ solves the following nonlinear first order local PDE
\[
\min\{u(x)-g(x), (x-\xs)\cdot \grad u(x)\} = 0
\]
with $u(\xs) = g(\xs)$. This is not only considerably simpler than \eqref{PDE:Tsai}, but can also be generalized to allow multiple viewpoints as we will show. Moreover, each sublevel set of $u$ is in fact the visibility set of the corresponding superlevel set of $g$. Efficiencies then arise when the obstacles are given by the graph of a function (for example, heights of buildings). In this case, we can reduce the dimension of the problem, and compute the horizontal visibility set from a given height, using the level set representation. Similarly, if the $t$ superlevel set of $g$ represents the position of the obstacles at a certain time $t$ ($g$ can for instance be the solution of an Eikonal equation), then the PDE needs to be solved only once and the visibility set at any given time can be extracted from the corresponding sublevel set.

The paper is organized as follows.
In \autoref{sec:SS}, we characterize visibility sets as star-shaped envelopes.
In \autoref{sec:PDE} we derive the new visibility PDE and its generalization to multiple viewpoints.
In \autoref{sec:scheme} we present the numerical scheme, while in \autoref{sec:convergence} we establish its convergence.
Finally, in \autoref{sec:numerics} we present both two-dimensional and three-dimensional examples of visibility sets computed using the new proposed PDE.

\section{Star shaped sets and functions}\label{sec:SS}

In this section we give an interpretation of the visibility set from a given point $\xs$ as the star-shaped envelope with respect to the point $\xs$. The definitions of star-shaped sets and envelopes are then extended to functions. Finally, we provide explicit formulas for the star-shaped envelopes of a function.

We star by recalling the definition of a star-shaped set.

\begin{definition}
We say the set $S \subset \Rn$ is \emph{star-shaped} with respect to $\xs$ if 
\[
x \in S \implies t\xs + (1-t)x \in S, \quad \text{ for all } t \in [0,1].
\]
\end{definition}

A simple example of a star-shaped set is a convex set. Indeed, convex sets are star-shaped with respect to every point inside. Moreover, intersections of convex sets are convex, but unions are not. As a consequence there is a natural (outer) convex envelope, but not an inner one. On the other hand, star-shaped sets are closed under both intersections and unions, which means one can define two star-shaped envelopes (with respect to $x^*$) for sets, the inner and outer envelopes.

\begin{definition}
Given $S\subset \R^n$ and $\xs \in S$, the outer star-shaped envelope  of $S$ with respect to $\xs$ is the intersection of all star-shaped sets with respect to $\xs$ which contain $S$. The inner star-shaped envelope of $S$ is the union of all star-shaped sets contained in it.
\end{definition}

\begin{figure}[h]
\centering
\subfigure{\includegraphics[width=\widththreefigures,trim={4cm 2.3cm 5cm 2.3cm},clip]{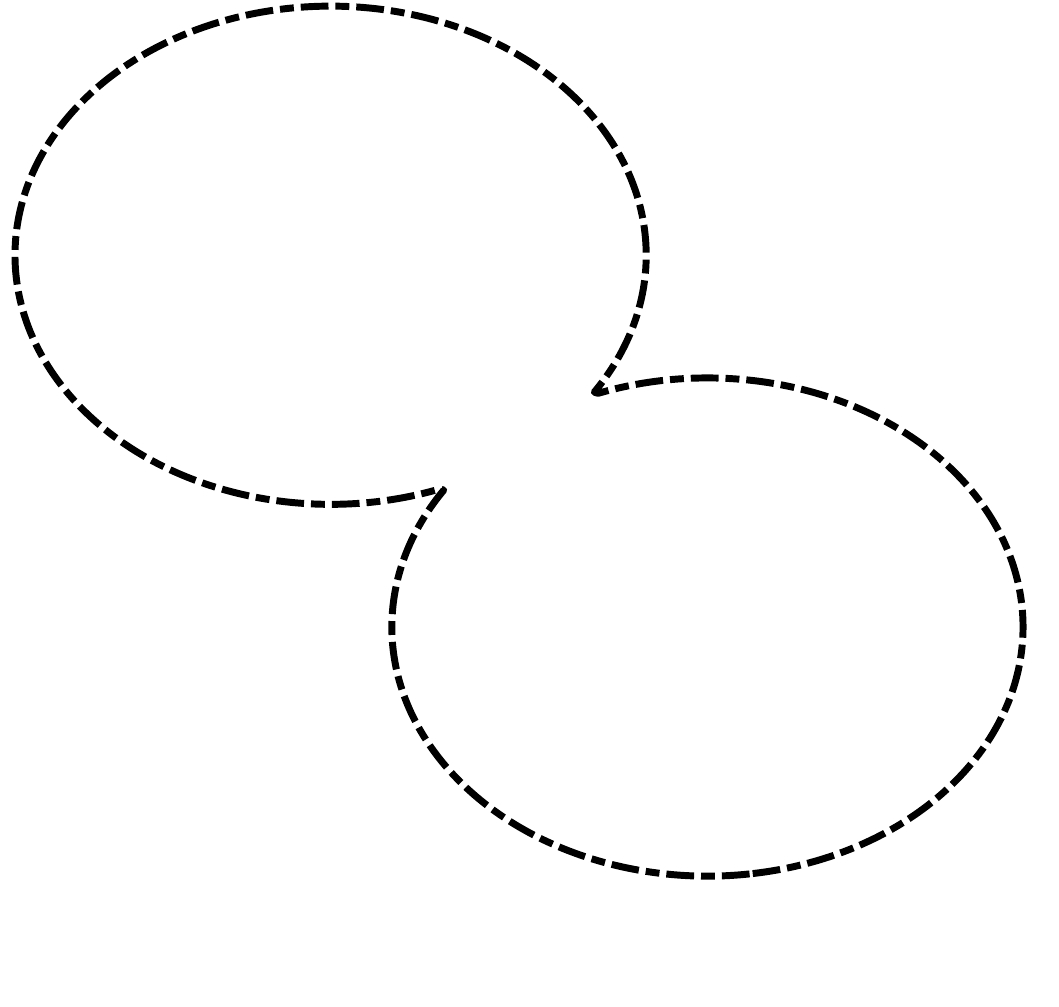}}
\subfigure{\includegraphics[width=\widththreefigures,trim={4cm 2.3cm 5cm 2.3cm},clip]{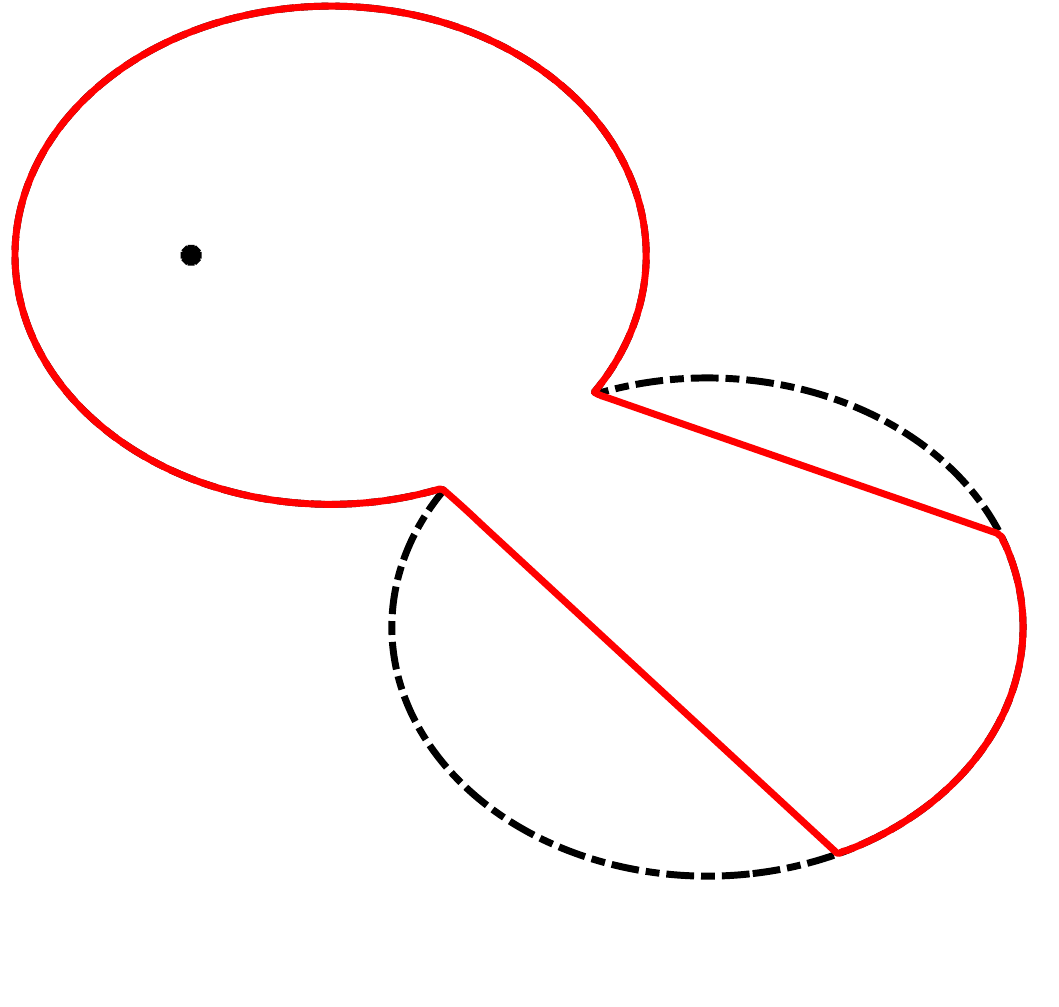}}
\subfigure{\includegraphics[width=\widththreefigures,trim={4cm 2.3cm 5cm 2.3cm},clip]{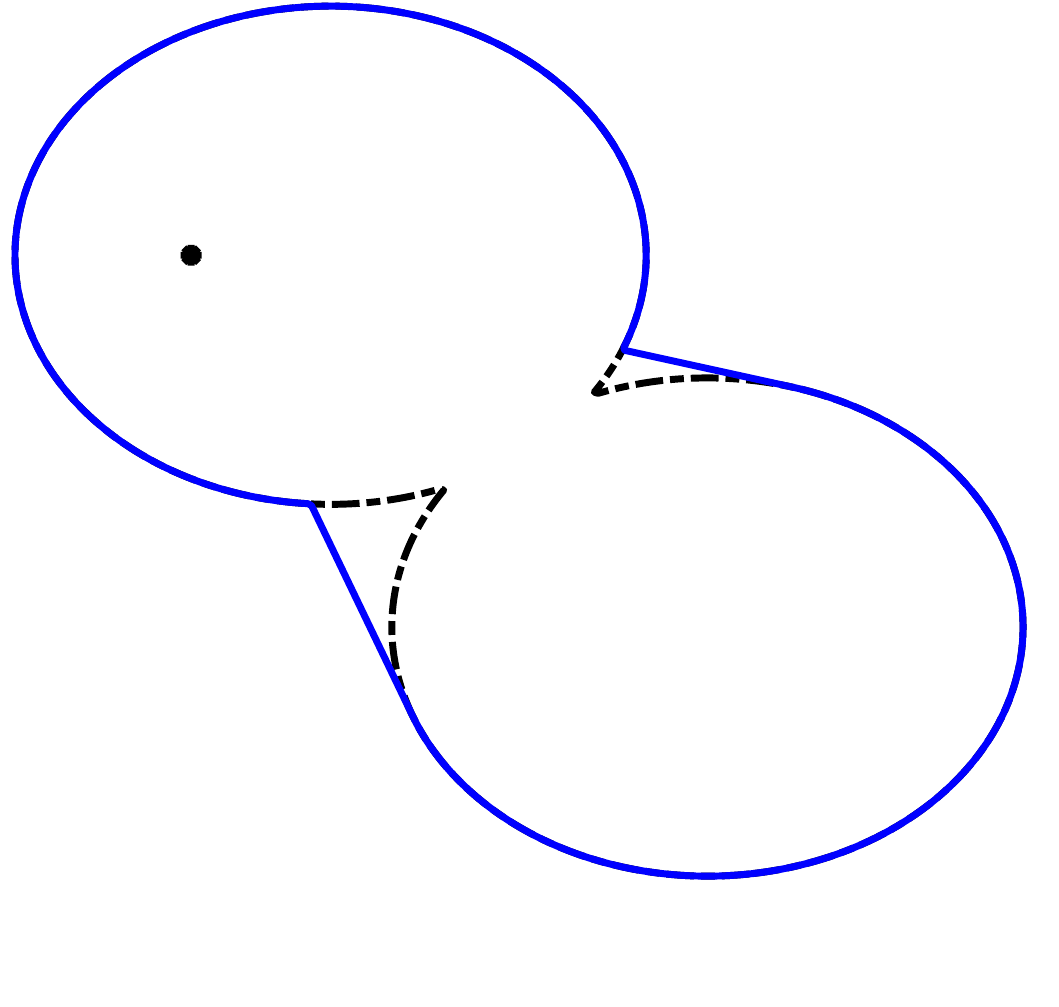}}
\caption{The inner and outer star-shaped envelopes of a set: the original set (left, dashed); inner star-shaped envelope / visibility set (center); outer star-shaped envelope (right).}
\label{fig:SSenvelopes}
\end{figure}
	
Looking at Figure \ref{fig:SSenvelopes}, one immediately sees how the inner star-shaped envelope corresponds to the \emph{visibility} subset of $S$ if there is an illumination source at the point $\xs$.
The remainder of $S$ is the invisible part.
One the other hand, the outer star-shaped envelope minus $S$ corresponds to the least amount of obstacles which would need to be removed so that all of $S$ is visible from $\xs$.

We now discuss star-shaped functions which are the main building block to characterize the visibility set as the solution of a nonlinear obstacle PDE. We write  $S_\alpha(u)\equiv\{x \in \Rn \mid u(x)\leq\alpha\}$ for the $\alpha$-sublevel set of a function $u$ and let $\Omega$ be a star-shaped domain with respect to $\xs$.

\begin{definition}
We say that the function $u: \Omega \to \R$ is star-shaped with respect to $\xs$ if
\[
\text{$S_\alpha(u)$ is star-shaped with respect to $\xs$}
\]
for all $\alpha\in\R$.
\end{definition}

\begin{remark}
Notice the similarity to quasiconvex functions: $u$ is said to be quasiconvex if $S_\alpha(u)$ is convex for all $\alpha\in\R$.
\end{remark}

We now characterize star-shaped functions with a zero-order condition.

\begin{lemma}\label{lemma:SSequivalence}
A function $u:\Omega \to \R$ is star-shaped with respect to $\xs$ if and only if  
\begin{equation}
\label{eq:SScond0}
u(t \xs + (1-t)y) \le  u(y), 
\quad \text{ for all } y \in \Omega,  0 \le t \le 1. 	
\end{equation}
\end{lemma}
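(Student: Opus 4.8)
The plan is to prove the two implications separately, exploiting the fact that the defining inequality in \eqref{eq:SScond0} is precisely the statement that the sublevel set passing through $y$ is star-shaped at $y$. Throughout I will use that $\Omega$ is star-shaped with respect to $\xs$, so that the interpolated point $t\xs + (1-t)y$ lies in $\Omega$ whenever $y \in \Omega$ and $t \in [0,1]$; this guarantees that $u(t\xs + (1-t)y)$ is well defined and that all the quantities below make sense.

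For the forward implication, I assume $u$ is star-shaped and fix $y \in \Omega$ and $t \in [0,1]$. The key step is to choose the level $\alpha = u(y)$. Then $y \in S_\alpha(u)$ trivially, and since $S_\alpha(u)$ is star-shaped with respect to $\xs$ by hypothesis, the segment point $t\xs + (1-t)y$ also belongs to $S_\alpha(u)$. Unwinding the definition $S_\alpha(u) = \{x \mid u(x) \le \alpha\}$ gives $u(t\xs + (1-t)y) \le \alpha = u(y)$, which is exactly \eqref{eq:SScond0}.

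For the reverse implication, I assume \eqref{eq:SScond0} and fix an arbitrary $\alpha \in \R$; the goal is to show that $S_\alpha(u)$ is star-shaped. Taking any $x \in S_\alpha(u)$ and any $t \in [0,1]$, I apply \eqref{eq:SScond0} with $y = x$ to obtain $u(t\xs + (1-t)x) \le u(x) \le \alpha$, so $t\xs + (1-t)x \in S_\alpha(u)$. Since $x$ and $t$ were arbitrary, $S_\alpha(u)$ is star-shaped, and since $\alpha$ was arbitrary, $u$ is star-shaped.

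I do not expect a genuine obstacle here: the argument is a direct translation between the level-set formulation and the pointwise inequality. The only point requiring a little care is the choice $\alpha = u(y)$ in the forward direction, which is the standard device for passing from a condition that holds on all sublevel sets to a pointwise condition. The hypothesis that $\Omega$ is star-shaped enters only to keep the interpolated points inside the domain, and is otherwise inert in the logical structure of the proof.
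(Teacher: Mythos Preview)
Your proof is correct and follows essentially the same approach as the paper: both arguments reduce the equivalence to the observation that the implication ``$u(y)\le\alpha \Rightarrow u(t\xs+(1-t)y)\le\alpha$ for all $\alpha$'' is equivalent to \eqref{eq:SScond0}, with the key step being the choice $\alpha=u(y)$. The paper states this as a compressed chain of equivalences, while you spell out the two implications separately, but the content is identical.
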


\begin{proof}
By definition, $u$ is star-shaped with respect to $\xs$ if and only if for all $\alpha\in\R$, $S_\alpha(u)$ is star-shaped with respect to $\xs$. This is equivalent to the condition 
\[
u(y) \le \alpha \implies u(t \xs + (1-t)y) \le \alpha,
\quad \text{ for all } y \in \Omega,  0 \le t \le 1
\]
for all all $\alpha \in \R$, which in turn is equivalent to \eqref{eq:SScond0}.
\end{proof}

We use this result to describe the monotonicity of a star-shaped function.

\begin{proposition}\label{prop:SSequivalence}
Let $u:\Omega\to\R$ be a function. Then $u$ is star-shaped with respect to $\xs$ if and only if $u$ is increasing along rays from $\xs$ to $x$. Moreover, if $u$ is star-shaped with respect to $\xs$, then $\xs$ is a global minimum of $u$.
\end{proposition}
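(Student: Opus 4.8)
The plan is to derive everything from Lemma~\ref{lemma:SSequivalence}, which already replaces the geometric star-shaped condition by the pointwise inequality \eqref{eq:SScond0}. The single idea I need is the affine reparameterization
\[
t\xs + (1-t)y = \xs + (1-t)(y-\xs),
\]
which identifies the segment from $y$ (at $t=0$) to $\xs$ (at $t=1$) with the ray $s \mapsto \xs + s(y-\xs)$ traversed backwards. Writing $v = y - \xs$ and $s = 1-t$, condition \eqref{eq:SScond0} becomes the statement that $u(\xs + sv) \le u(\xs + v)$ for every $s \in [0,1]$, i.e. along each ray the value at any interior point is at most the value at the endpoint.

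For the equivalence I would argue both directions by comparing values of the one-variable restriction $\phi_v(s) := u(\xs + s v)$, which is well defined because $\Omega$ is star-shaped with respect to $\xs$ and hence contains the whole segment. The condition that $u$ be increasing along rays means precisely that each $\phi_v$ is nondecreasing. If $u$ is increasing along rays, then comparing the two parameters $s \le 1$ gives $\phi_v(s) \le \phi_v(1)$, which is \eqref{eq:SScond0}, so $u$ is star-shaped. Conversely, suppose \eqref{eq:SScond0} holds and fix $0 \le s_1 \le s_2$ with $s_2 > 0$. I apply the rewritten condition along the ray of direction $v$ to the point $w = s_2 v$ with scaling factor $\lambda = s_1/s_2 \in [0,1]$: since $\xs + \lambda w = \xs + s_1 v$ and $\xs + w = \xs + s_2 v$, this yields $\phi_v(s_1) \le \phi_v(s_2)$. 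The degenerate case $s_2 = 0$ forces $s_1 = 0$ and is trivial. Hence $\phi_v$ is nondecreasing for every $v$, which is exactly the statement that $u$ is increasing along rays.

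For the final assertion I would simply specialize \eqref{eq:SScond0} to $t = 1$: then $t\xs + (1-t)y = \xs$, so $u(\xs) \le u(y)$ for every $y \in \Omega$, which says exactly that $\xs$ is a global minimum of $u$.

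There is no genuinely hard step here; the whole content is bookkeeping. The only points that demand a little care are checking that the changes of variable $s = 1-t$ and $\lambda = s_1/s_2$ stay inside $[0,1]$ so that Lemma~\ref{lemma:SSequivalence} applies, separating out the degenerate ray $s_2 = 0$, and invoking the star-shapedness of $\Omega$ to guarantee that every intermediate point $\xs + s v$ lies in the domain, so that $\phi_v$ is defined on all of $[0,1]$.
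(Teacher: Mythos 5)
Your proof is correct and follows exactly the route the paper takes: the paper's own proof of Proposition~\ref{prop:SSequivalence} is the one-line remark that it ``follows immediately from Lemma~\ref{lemma:SSequivalence},'' and your argument is precisely a careful write-up of that deduction, via the substitution $s = 1-t$ and the rescaling $\lambda = s_1/s_2$ to get monotonicity of $\phi_v$, plus the $t=1$ specialization for the global minimum. Nothing to correct.
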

\begin{proof}
This follows immediately from Lemma \ref{lemma:SSequivalence}.
\end{proof}

Next, in a similar way to star-shaped envelopes of a set, we define upper and lower star-shaped envelopes of a function $g$ with respect to a point $\xs$.

\begin{definition}
Let $g\in C(\Omega)$ be bounded by below. The lower star-shaped envelope of $g$ with respect to $\xs$ is defined as 
\begin{equation}
SS^-(g)(x) = \sup \{ v(x) \mid \text{ $v$ is star-shaped with respect to $\xs$ and $v \leq g$}\},
\end{equation}
while the upper star-shaped envelope is given by
\begin{equation}
SS^+(g)(x) = \inf  \{ v(x) \mid \text{ $v$ is star-shaped with respect to $\xs$ and $v \geq g$}\}.
\end{equation}
\end{definition}

\begin{remark}
We require that $g$ is bounded by below in order for the lower star-shaped envelope to be well defined since otherwise there would no star-shaped function with respect to $\xs$ bounded from above by $g$.
\end{remark}

We finish this section by proving the following simple explicit solution formulas for the star-shaped envelope of a function.

\begin{proposition}\label{prop:LowerEnvExplicit}
Let $g \in C(\Omega)$ be bounded by below and define $w:\Omega\to\R$ to be given by
\bq\label{eq:lowerss_formula} 
w(x) = \min \left\{g(y) \mid y = x + t(x-\xs) \in \Omega,  t\geq 0\right\}
\eq
with $w(\xs) = \min_{x\in\Omega} g(x)$. Then $w$ is star-shaped with respect to $\xs$ and $w = SS^-(g)$.
\end{proposition}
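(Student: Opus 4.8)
The plan is to prove both assertions by first reparametrizing the defining formula \eqref{eq:lowerss_formula} ray by ray from $\xs$, and then running the standard ``largest structured minorant'' envelope argument through the characterization in Proposition \ref{prop:SSequivalence}.

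First I would rewrite the formula along rays. Writing a point $x \neq \xs$ as $x = \xs + r\theta$ with $\theta$ a unit vector and $r > 0$, the condition $y = x + t(x-\xs)$ with $t \ge 0$ becomes $y = \xs + r(1+t)\theta$, so that $y$ ranges exactly over the outward portion $\{\xs + s\theta : s \ge r\}$ of the ray through $\xs$ and $x$. Hence
\[
w(\xs + r\theta) = \inf\{\, g(\xs + s\theta) : s \ge r,\ \xs + s\theta \in \Omega \,\},
\]
the running infimum of $g$ from $r$ outward (a minimum under the stated hypotheses). Two facts are then immediate: taking $t=0$ (i.e.\ $s=r$) shows $w \le g$ on $\Omega$; and since increasing $r$ shrinks the index set, $r \mapsto w(\xs + r\theta)$ is nondecreasing, i.e.\ $w$ is increasing along every ray from $\xs$. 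At the vertex the formula degenerates because $x-\xs=0$, and the separate prescription $w(\xs)=\min_\Omega g$ is precisely what forces $\xs$ to be a global minimum of $w$; combined with the per-ray monotonicity, this makes $w$ increasing along all rays from $\xs$, so $w$ is star-shaped by Proposition \ref{prop:SSequivalence}.

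It remains to identify $w$ with $SS^-(g)$ via two inequalities. Since $w$ is star-shaped and $w \le g$, it is a competitor in the supremum defining $SS^-(g)$, whence $w \le SS^-(g)$. For the reverse, let $v$ be any star-shaped function with $v \le g$. For $x = \xs + r\theta$ and any admissible $y = \xs + s\theta$ with $s \ge r$, the point $x$ lies between $\xs$ and $y$ on the ray, so monotonicity of $v$ along rays (Proposition \ref{prop:SSequivalence}) gives $v(x) \le v(y) \le g(y)$; taking the infimum over such $y$ yields $v(x) \le w(x)$. At the vertex, $v(\xs) \le \min_\Omega g = w(\xs)$ because $\xs$ is a global minimum of $v$. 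Taking the supremum over all such $v$ gives $SS^-(g) \le w$, and the two inequalities together give $w = SS^-(g)$.

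I expect the only genuinely delicate point to be the treatment of the vertex $\xs$: recognizing that the separate value $w(\xs)=\min_\Omega g$ (rather than the degenerate $g(\xs)$) is exactly what enforces the global-minimum property demanded of a star-shaped function, and keeping the orientation of the ray (outward from $\xs$) consistent throughout. The rest is a routine envelope computation once the ray reparametrization is in place.
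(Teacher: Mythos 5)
Your proof is correct and follows essentially the same route as the paper's: show $w \le g$ and that $w$ is increasing along rays from $\xs$ (hence star-shaped by Proposition \ref{prop:SSequivalence}), then use the ray monotonicity of an arbitrary star-shaped minorant $v$ of $g$ to conclude $v \le w$, so that $w$ is the largest such minorant. The only difference is presentational: you prove the maximality directly via the chain $v(x) \le v(y) \le g(y)$ and an infimum over the outward ray (which also sidesteps attainment of the minimum), whereas the paper runs the same inequality as a contradiction through an $\argmin$ point; your explicit treatment of the vertex value $w(\xs)=\min_\Omega g$ is likewise a slightly more careful version of the paper's ``by construction'' step.
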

\begin{proof}
By the assumptions on $g$, $w$ is well-defined. By construction, $w$ is increasing along rays from $\xs$ to $x$, and so, by Proposition \ref{prop:SSequivalence}, $w$ is star-shaped with respect to $\xs$. Moreover, it is clear that $w \leq g$.

We want to show that $w = SS^-(g)$. Suppose by contradiction that it is not. This means that there exists a star-shaped with respect to $\xs$ function $v$ with $v\leq g$ such that $v(x) > w(x)$ for some $x\in\Omega$. Without loss of generality, assume that $x\neq\xs$. We have $w(x) < g(x)$ and that there exists $y \in \Omega$ such that
\[
y \in \argmin \left\{g(y) \mid y = x+t(x-\xs)\in\Omega,t > 0\right\}.
\]
Hence $v(x) > w(x) = w(y) = g(y) \geq v(y)$ and therefore $v$ is not increasing along the ray from $\xs$ to $x$. Finally, we invoke Proposition \ref{prop:SSequivalence} to conclude that $v$ is not star-shaped with respect to $\xs$, which leads to the desired contradiction.
\end{proof}

\begin{remark}\label{rmk:prop_lower_ss}
Intuitively, we can find $w$ by tracing the values from the boundary along rays to $\xs$ and taking the minimum of $g$ along the way.
\end{remark}

\begin{proposition}\label{prop:UpperEnvExplicit}
Let $g \in C(\Omega)$ and define $u:\Omega\to\R$ to be given by
\bq\label{eq:explicit_visibility}
u(x) = \max \left\{g(y) \mid y = \xs + t(x-\xs) \in \Omega, t\in[0,1]\right\}.
\eq
Then $u$ is star-shaped with respect to $\xs$ and $u = SS^+(g)$.
\end{proposition}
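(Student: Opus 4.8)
The plan is to mirror the argument for the lower envelope in Proposition \ref{prop:LowerEnvExplicit}, adapting it to a maximum over the inner segment rather than a minimum over the outer ray. First I would check that $u$ is well defined. Since $\Omega$ is star-shaped with respect to $\xs$ and $x \in \Omega$, the whole closed segment $\{\xs + t(x-\xs) : t \in [0,1]\}$ lies in $\Omega$ and is compact, so the continuous function $g$ attains its maximum on it. Note that, unlike the lower envelope, no boundedness hypothesis is needed here: compactness of the segment already guarantees finiteness of the maximum. Observe too that at $x = \xs$ the feasible set collapses to $\{\xs\}$, so the formula automatically gives $u(\xs) = g(\xs)$ with no separate definition required.

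Next I would show that $u$ is star-shaped with respect to $\xs$ by verifying, via Proposition \ref{prop:SSequivalence}, that $u$ is nondecreasing along rays from $\xs$. The key observation is that for a point $x' = \xs + s(x-\xs)$ with $s \in [0,1]$ lying between $\xs$ and $x$, the segment from $\xs$ to $x'$ is exactly the sub-segment $\{\xs + (st)(x-\xs) : t \in [0,1]\}$ of the segment from $\xs$ to $x$. Hence the feasible set defining $u(x')$ is contained in the feasible set defining $u(x)$, and taking a maximum over a smaller set yields $u(x') \le u(x)$, which is precisely monotonicity along rays. The inequality $u \ge g$ is then immediate by choosing $t = 1$ in the maximum.

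It remains to prove $u = SS^+(g)$, which I expect to be the only substantive step. One inequality is free: $u$ is itself a star-shaped function lying above $g$, so it is one of the competitors in the infimum defining $SS^+(g)$, giving $SS^+(g) \le u$. For the reverse inequality I would take an arbitrary star-shaped $v$ with $v \ge g$ and show $v \ge u$ pointwise. Fix $x \in \Omega$ and let $y_0 = \xs + t_0(x-\xs)$, $t_0 \in [0,1]$, be a maximizer, so that $u(x) = g(y_0)$. Since $y_0$ lies on the ray from $\xs$ to $x$ and is no farther out than $x$, monotonicity of $v$ along that ray (Proposition \ref{prop:SSequivalence}) gives $v(x) \ge v(y_0)$, while $v \ge g$ gives $v(y_0) \ge g(y_0) = u(x)$; chaining these yields $v(x) \ge u(x)$. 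Taking the infimum over all such $v$ gives $SS^+(g) \ge u$ and completes the equality. Alternatively, this last step can be packaged as a contradiction argument identical in spirit to the one used for $w$ in Proposition \ref{prop:LowerEnvExplicit}: assuming some star-shaped $v \ge g$ satisfies $v(x) < u(x)$ at a point forces $v$ to violate monotonicity along the ray through the maximizer $y_0$, contradicting Proposition \ref{prop:SSequivalence}.
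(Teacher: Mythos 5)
Your proposal is correct and takes essentially the same approach as the paper: star-shapedness follows from monotonicity along rays via Proposition \ref{prop:SSequivalence}, and the equality $u = SS^+(g)$ rests on the chain $v(x) \geq v(y_0) \geq g(y_0) = u(x)$ at a maximizer $y_0$, which is exactly the contrapositive of the paper's contradiction argument (as you yourself note at the end). The only differences are cosmetic: you argue directly rather than by contradiction, and you fill in details (well-definedness of the maximum, the nested-segment justification of monotonicity along rays) that the paper treats as immediate.
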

\begin{proof}
From the definition of $u$, it is clear that $u$ is increasing along rays from $\xs$ to $x$, and so, by Proposition \ref{prop:SSequivalence}, $u$ is star-shaped with respect to $\xs$. Moreover, $u \geq g$, again by definition of $u$.

We want to show that $u = SS^+(g)$. Suppose by contradiction that it is not. This means that there exists a star-shaped with respect to $\xs$ function $v$ with $v\geq g$ such that $v(x) < u(x)$ for some $x\in\Omega$. Without loss of generality assume $x\neq\xs$. We have $u(x) > g(x)$ and that there exists $y\in\Omega$ such that
\[
y \in \argmax \left\{g(y) \mid y = \xs+t(x-\xs)\in\Omega,t\in[0,1)\right\}.
\]
Hence $v(x) < u(x) = u(y) = g(y) \leq v(y)$, which means, just like in the proof of Proposition \ref{prop:LowerEnvExplicit}, that $v$ is not increasing along the ray from $\xs$ to $x$. Hence $v$ is not star-shaped with respect to $\xs$ according to Proposition \ref{prop:SSequivalence} and we have obtained our contradiction.
\end{proof}
\begin{remark}\label{rmk:prop_upper_ss}
This formula corresponds to the classic ray tracing algorithm to find the visibility set. We trace the values towards the boundary along rays from $\xs$ taking the maximum of $g$ along the way.
\end{remark}

\section{PDEs and Visibility}\label{sec:PDE}

In this section, we present the new PDE formulation of visibility sets from a single viewpoint and its extension to multiple viewpoints.
We start with a level set PDE interpretation for star-shaped envelopes.
Given that the inner star-shaped envelope of a set corresponds to its visibility set, the PDE obtained computes the visibility set for each sublevel set. We then generalize it to multiple viewpoints.

\subsection{Viscosity Solutions}
Viscosity solutions \cite{CIL} provide the correct notion of weak solution to a class of degenerate elliptic PDEs which includes the PDEs considered here.
We review it briefly here.

Let $S^n$ be the set of real symmetric $n\times n$ matrices, and take $N \leq M$ to denote the usual partial ordering on $S^n$, namely that $N-M$ is negative semi-definite.
\begin{definition}
The operator $F(x,r,p,M):\Omega\times \R\times \R \times S^n \to \R$ is degenerate elliptic if
\[
F(x, r, p, M) \leq F(x, s, p, N) \quad \text{whenever } r \leq s \text{ and } N \leq M.
\]

\begin{remark}
For brevity we use the notation $F[u](x) \equiv F(x, u(x), \grad u(x), D^2u(x))$.
\end{remark}
\end{definition}


\begin{definition}[Upper and lower semi-continuous envelopes]
The upper and lower semicontinuous envelopes of a function $u(x)$ are defined, respectively, by
\begin{align*}
u^*(x) & = \limsup_{y\to x} u(y),\\
u_*(x) & = \liminf_{y\to x} u(y).
\end{align*}
\end{definition}

\begin{definition}[Viscosity solutions]
Let $F:\Omega \times \R \times \Rn$. We say the upper semi-continuous (lower semi-continuous) function $u:\Omega\to\R$ is a viscosity subsolution (supersolution) of $F[u] = 0$ in $\Omega$ if for every $\phi \in C^1(\Omega)$, whenever $u-\phi$ has a local maximum (minimum) at $x\in\Omega$,
\[
F(x, u(x), \grad \phi(x)) \leq 0 \ (\geq 0).
\]
Moreover, we say u is a viscosity solution of $F[u] = 0$ if $u$ is both a viscosity sub- and supersolution.
\end{definition}

\begin{remark}
For brevity we use the notation $F[u](x) \equiv F(x,u(x),\grad u(x))$. In addition, when checking the definition of a viscosity solution we can limit ourselves to considering unique, strict, global maxima (minima) of $u-\phi$ with a value of zero at the extremum. See, for example, \cite[Prop 2.2]{KoikeViscosity}.
\end{remark}

\subsection{Regularity}

We briefly discuss the regularity of the star-shaped envelopes. We start by observing that the star-shaped functions need not be continuous.

\begin{example}
In one dimension, the function $u(x)=1$ for $x\neq 0$ and $u(0)=0$ is star-shaped with respect to $0$.
In two dimensions,  take $A = \{(x,y)\in\R^2: xy = 0\}$ and define $u$ as the characteristic function of the complement of $A$, i.e., $u(x) = 0$ if $x \in A$ and $u(x) = 1$ otherwise.
Once again $u$ is star-shaped with respect to the origin, but it is not continuous.
In fact, $u$ is lower semicontinuous.
\end{example}

As for the star-shaped envelopes of functions, the upper star-shaped envelope is continuous while the lower star-shaped envelope is only lower semicontinuous as it may be discontinuous at $\xs$.

\begin{proposition}\label{prop:continuity}
Let $g \in C(\Omega)$ be bounded by below. Then $w = SS^-(g)$ is lower semicontinuous in $\Omega$ and continuous in $\Omega$ except at $\xs$, while $u = SS^+(g)$ is continuous in $\Omega$. If $\xs$ is a global minimum of $g$ then $w = SS^-$ is also continuous in $\Omega$.
\end{proposition}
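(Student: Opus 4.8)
My plan is to work entirely from the explicit formulas \eqref{eq:lowerss_formula} and \eqref{eq:explicit_visibility} of Propositions \ref{prop:LowerEnvExplicit} and \ref{prop:UpperEnvExplicit}, reducing each claim to the behavior of an extremum of $g$ over a segment or a ray that moves with $x$. I would dispatch $u = SS^+(g)$ first, since it is the clean case. Writing $G(x,t) = g(\xs + t(x-\xs))$, star-shapedness of $\Omega$ guarantees that the whole segment $\{\xs + t(x-\xs) : t \in [0,1]\}$ lies in $\Omega$, so $u(x) = \max_{t \in [0,1]} G(x,t)$ with $G$ jointly continuous. The maximum of a jointly continuous function over the \emph{fixed} compact parameter set $[0,1]$ is continuous in $x$ (a routine uniform-continuity argument on compact subsets of $\Omega$), yielding continuity of $u$ on all of $\Omega$; at $\xs$ the segment collapses to $\{\xs\}$ and $u(\xs) = g(\xs)$, with no obstruction.

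For $w = SS^-(g)$ the situation is genuinely more delicate, because the relevant ray does \emph{not} collapse as $x \to \xs$. I would parametrize the points on the ray from $\xs$ through $x$ as $\xs + s(x-\xs)$, $s \ge 1$, and let $S(x)$ be the exit radius, i.e. the largest $s$ with $\xs + s(x-\xs) \in \closedOmega$, so that $w(x) = \min_{1 \le s \le S(x)} g(\xs + s(x-\xs))$. Away from $\xs$, the compact ray segment $R(x)$ has endpoints $x$ and $\xs + S(x)(x-\xs)$; if these vary continuously with $x$, then $R(x)$ varies continuously in the Hausdorff metric and $w(x) = \min_{R(x)} g$ is continuous at every $x_0 \neq \xs$. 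This is where I expect the main obstacle to sit: the continuity of the exit radius $S(x)$. For a genuinely arbitrary star-shaped $\Omega$ the radial function is only semicontinuous, and one can engineer $g$ so that a nearby ray reaches a deep minimum that the ray at $x_0$ just misses, so that $w$ fails continuity off $\xs$. The argument therefore needs (and I would assume, as is natural for the domains occurring in practice) that the radial exit function of $\Omega$ with respect to $\xs$ is continuous, e.g. $\Omega$ convex or with a star-shaped Lipschitz boundary.

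Finally I would treat the point $\xs$ and the semicontinuity statements together. Since $R(x) \subseteq \closedOmega$ for every $x$, we have $w(x) \ge \min_{\closedOmega} g = \min_\Omega g = w(\xs)$, hence $\liminf_{x \to \xs} w(x) \ge w(\xs)$, so $w$ is lower semicontinuous at $\xs$; combined with continuity on $\Omega \setminus \{\xs\}$ this gives lower semicontinuity of $w$ on all of $\Omega$. The failure of continuity at $\xs$ is then structural: the direction $(x-\xs)/\abs{x-\xs}$ has no limit as $x \to \xs$, so $w(x)$ depends on the approach direction. Under the extra hypothesis that $\xs$ is a global minimum of $g$, however, $w(\xs) = g(\xs)$ and one has the two-sided bound $g(\xs) \le w(x) \le g(x)$, the upper estimate because $x \in R(x)$ (the case $s=1$) and the lower because $g \ge g(\xs)$ throughout $\Omega \supseteq R(x)$. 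Since $g(x) \to g(\xs)$ by continuity of $g$, the squeeze forces $w(x) \to g(\xs) = w(\xs)$, upgrading $w$ to continuous at $\xs$ and hence on all of $\Omega$.
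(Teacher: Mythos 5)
Your proposal follows the same basic route as the paper---whose entire proof is a one-line appeal to the solution formulas \eqref{eq:lowerss_formula} and \eqref{eq:explicit_visibility}---but your execution is more careful, and the care turns out to matter. The argument for $u=SS^+(g)$ is exactly the intended one and is complete: star-shapedness of $\Omega$ puts the whole segment $\{\xs+t(x-\xs):t\in[0,1]\}$ inside $\Omega$, so $u(x)=\max_{t\in[0,1]}g(\xs+t(x-\xs))$ is the maximum of a jointly continuous function over a \emph{fixed} compact parameter set, hence continuous. The squeeze $g(\xs)\le w(x)\le g(x)$ when $\xs$ is a global minimizer, and the bound $w\ge\inf_\Omega g=w(\xs)$ giving lower semicontinuity at $\xs$, are likewise correct.

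The obstruction you flag for $w$ away from $\xs$ is genuine, and in fact the proposition as literally stated is \emph{false} for an arbitrary star-shaped domain, so the hypothesis you add (continuity of the radial exit function, e.g.\ $\Omega$ convex, as is the computational domain $[-1,1]^n$ used later in the paper) is a necessary repair rather than an artifact of your method. Concretely, in polar coordinates take $\xs$ the origin and
\[
\Omega=\{r<1\}\cup\{(r,\phi):\ 0<\phi<\pi,\ r<2\},
\]
an open bounded star-shaped set whose exit radius jumps from $2$ to $1$ at $\phi=0$; let $A=\{(1.5\cos\phi,1.5\sin\phi):0<\phi<\pi\}$ and $g(y)=-\max\{0,\,1-10\,\dist(y,A)\}$, which is Lipschitz on $\closedOmega$. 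The ray from $x_0=(0.5,0)$ exits $\Omega$ at $r=1$ and stays at distance at least $0.5$ from $A$, so $w(x_0)=0$; but for every $\phi\in(0,\pi)$ the ray from $x=(0.5\cos\phi,0.5\sin\phi)$ passes through $A$, so $w(x)=-1$. Letting $\phi\to0^+$ shows $w$ fails not only continuity but even lower semicontinuity at a point $x_0\neq\xs$; the paper's proof silently uses precisely the exit-radius continuity you isolate. One small blemish, inherited from the paper itself: since $g$ is only assumed to lie in $C(\Omega)$, you cannot evaluate $g$ at exit points on $\partial\Omega$ nor write $\min_{\closedOmega}g$; you should either phrase everything with infima over the open ray segments $\{y=x+t(x-\xs)\in\Omega,\ t\ge0\}$ (your squeeze and semicontinuity arguments go through verbatim) or add the assumption $g\in C(\closedOmega)$.
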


\begin{proof}
The proof follows from the solutions formulas \eqref{eq:lowerss_formula} and \eqref{eq:explicit_visibility} since we take the minimum and maximum of a continuous function $g$ along rays to and from $\xs$, respectively, as pointed out in Remarks \ref{rmk:prop_lower_ss} and \ref{rmk:prop_upper_ss}.
\end{proof}

\begin{example}\label{ex:regularity}
Let $g(x) = \abs{x+1}$ and let $\xs = 0$. Then the lower and upper star-shaped envelopes are given by
\[
w(x) = \begin{cases}
-x-1	& \text{if } x < -1,\\
0		& \text{if } -1 \leq x \leq 0,\\
x+1		& \text{if } x>0,
\end{cases}
\quad \text{and} \quad u(x) = \begin{cases}
-x-1	& \text{if } x < -2,\\
1		& \text{if } -2 \leq x \leq 0,\\
x+1		& \text{if } x>0.
\end{cases}
\]

A two-dimensional example is given in  Figure~\ref{fig:Ex1Envelope}: $g$ is given by the distance to two points and $\xs$ is chosen as a point on the $.3$ level set of $g$.
The upper and lower envelopes are pictured. 
The lower one is discontinuous at $\xs$.
Replacing $g$ with $\max(g,g(\xs)$ leads to a function whose global minimum is attained at $\xs$ and therefore both star-shaped envelopes are continuous in this case. This is depicted in Figure~\ref{fig:Ex1Envelope2}.
\end{example}

\begin{figure}[h]
\subfigure{\includegraphics[width=\widththreefigures]{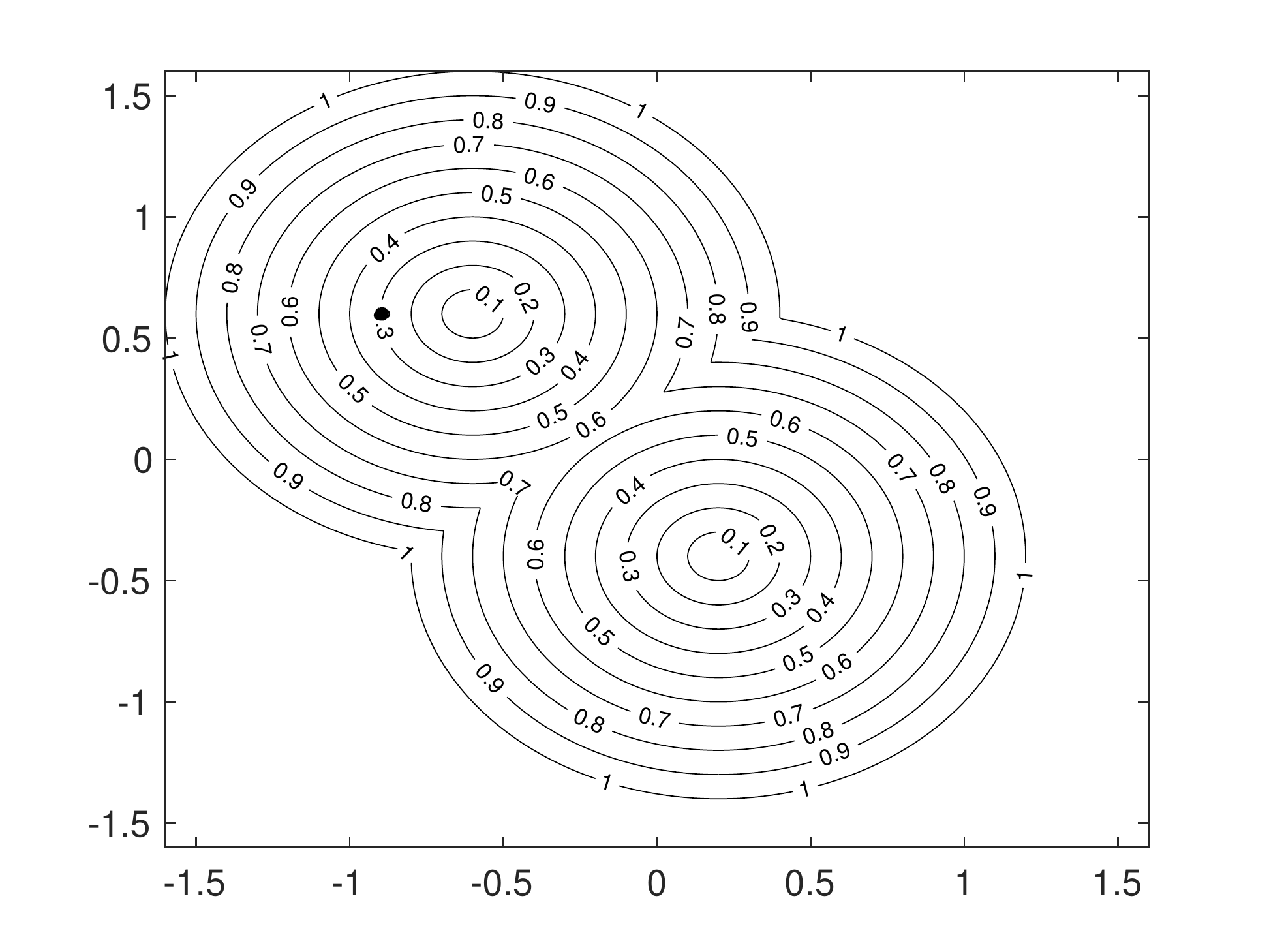}}
\subfigure{\includegraphics[width=\widththreefigures]{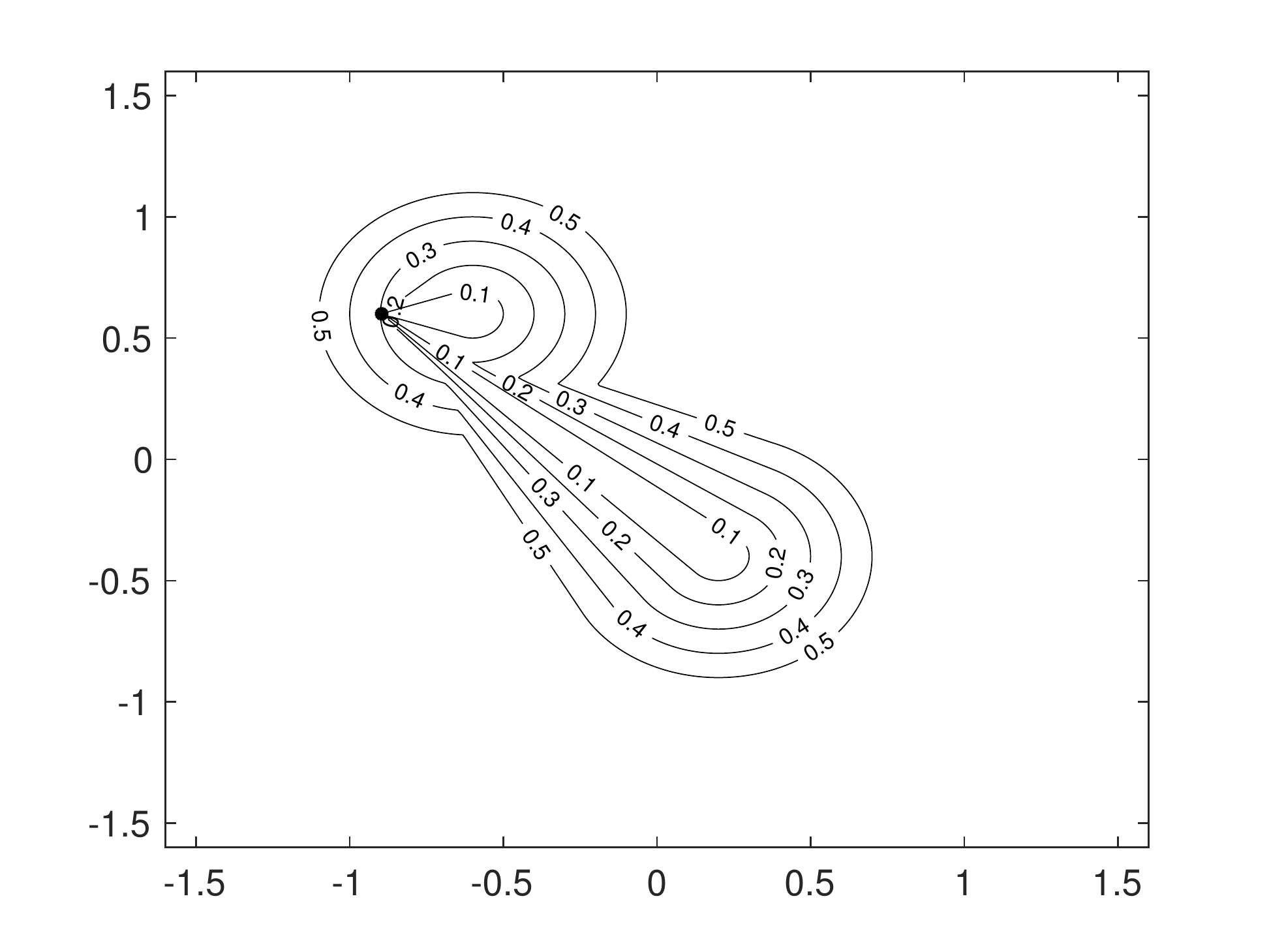}}
\subfigure{\includegraphics[width=\widththreefigures]{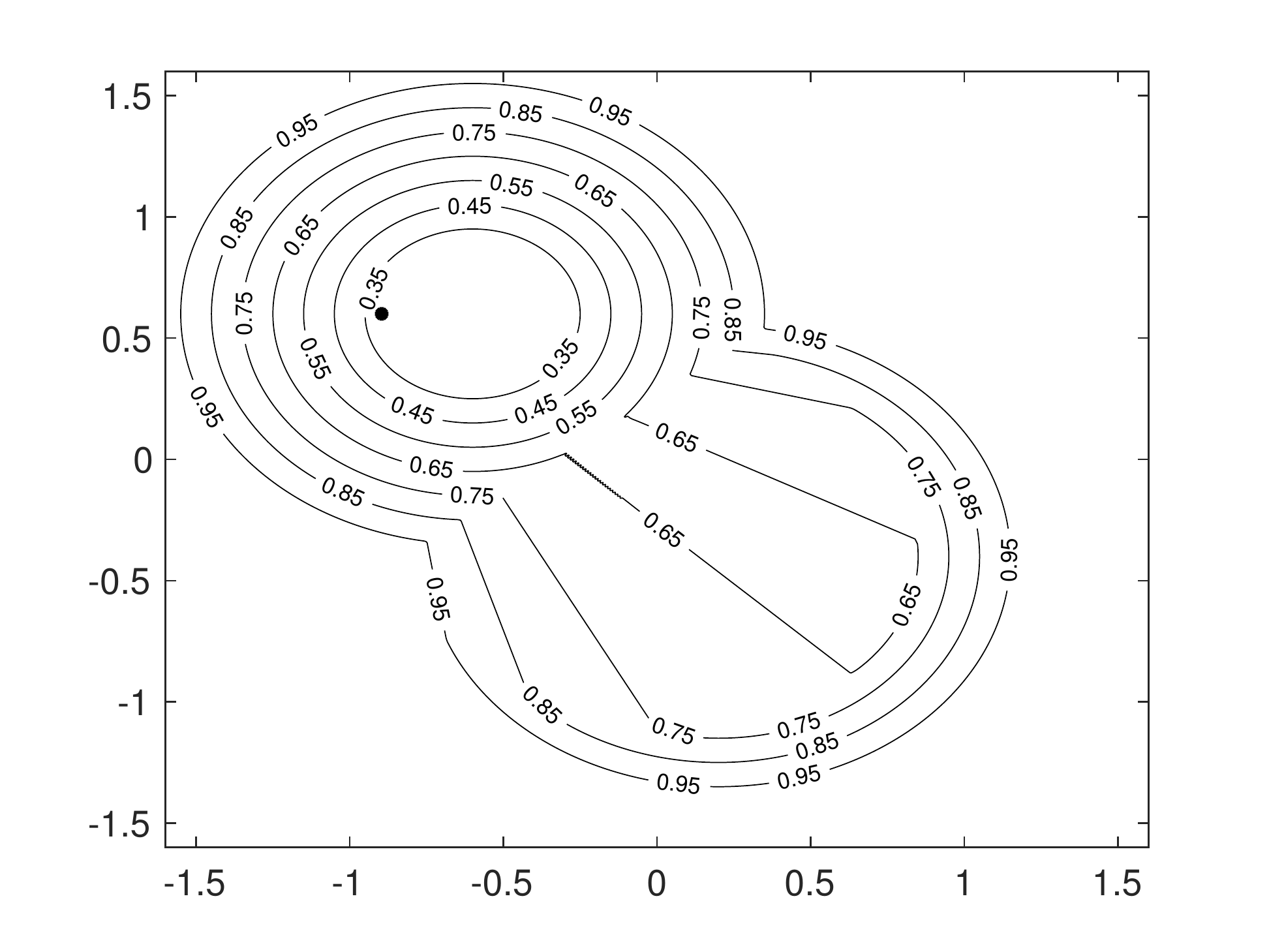}}
\caption{Contour plot of $g$ given in Example \ref{ex:regularity} (left), its lower star-shaped envelope $w$ (center) and its upper (visibility) envelope $u$ (right). The point $\xs$ is marked by $*$. The lower star-shaped envelope is discontinuous.}
    \label{fig:Ex1Envelope}
\end{figure}

\begin{figure}[h]
\subfigure{\includegraphics[width=\widththreefigures]{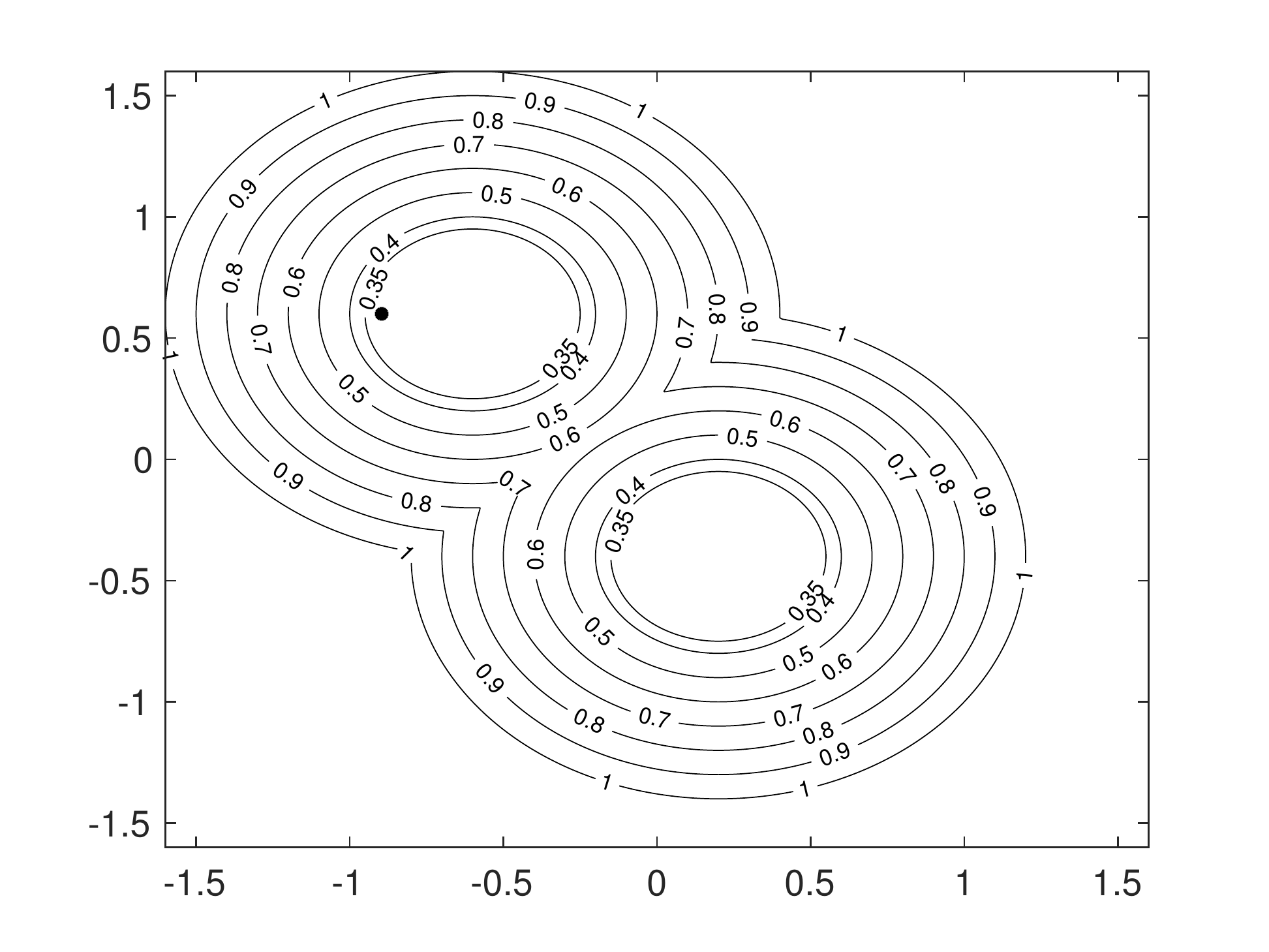}}
\subfigure{\includegraphics[width=\widththreefigures]{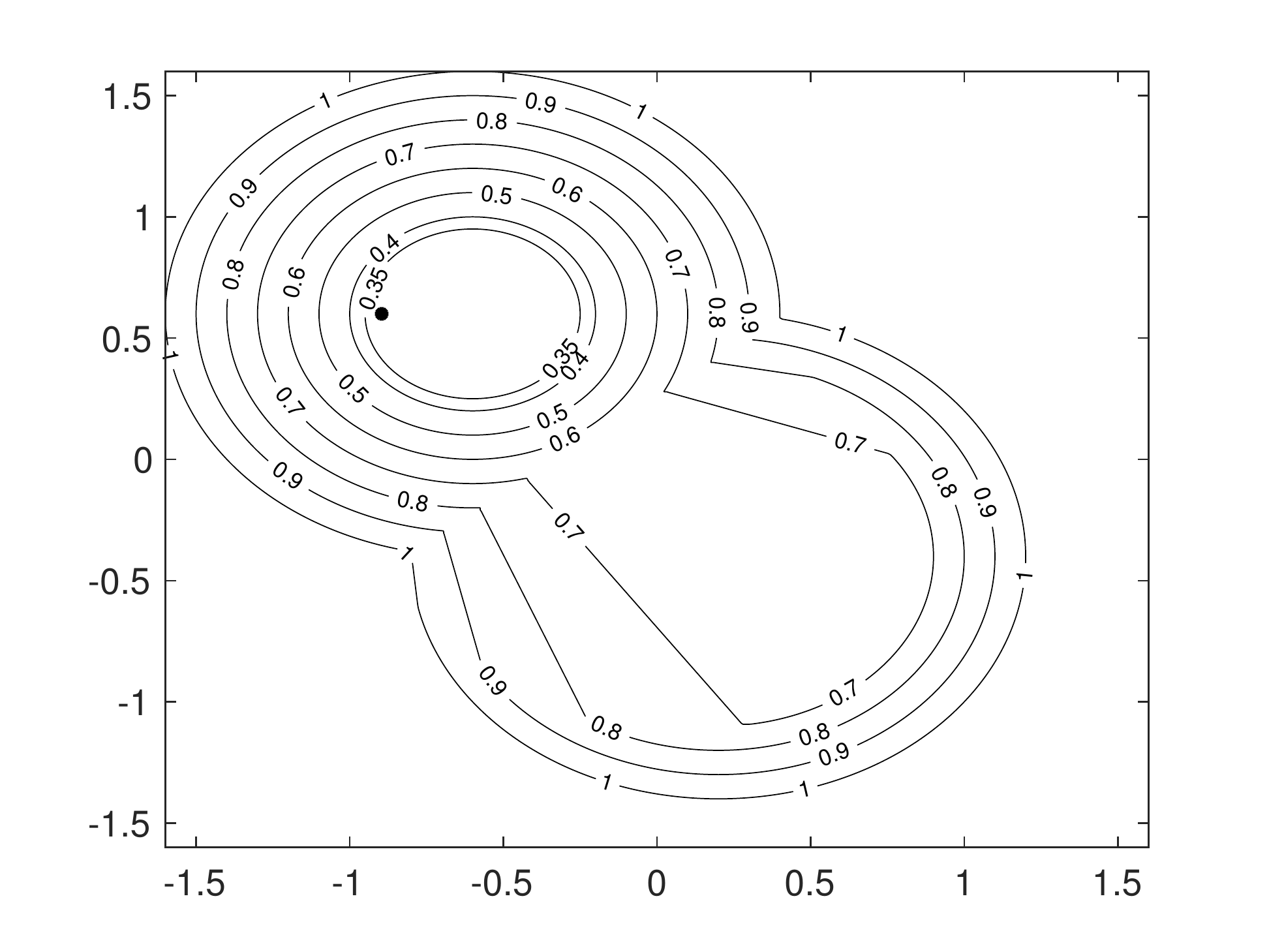}}
\subfigure{\includegraphics[width=\widththreefigures]{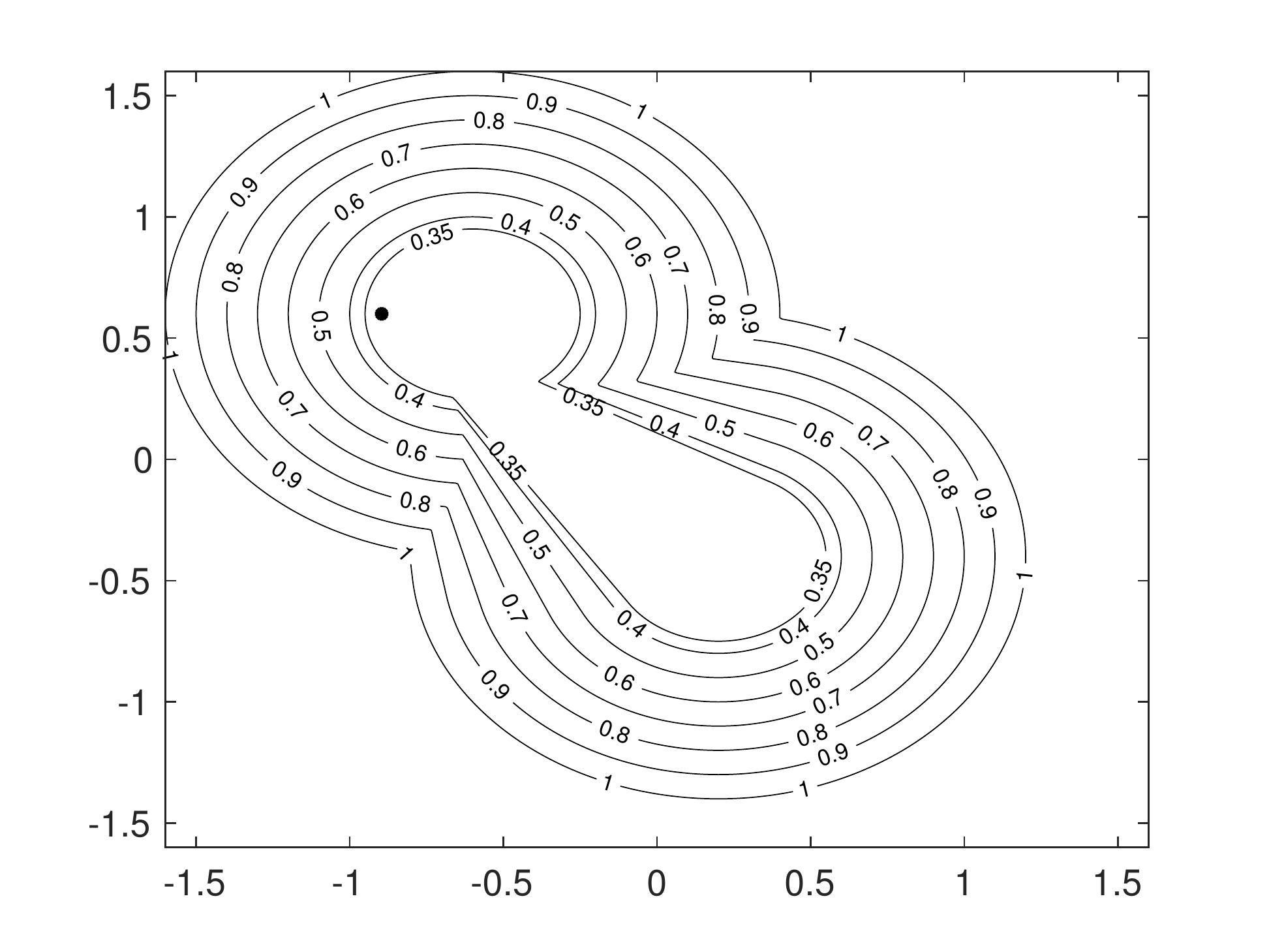}}
\caption{Contour plot of $\max(g,g(\xs)$ where $g$ given in Example \ref{ex:regularity} (left), its lower star-shaped envelope $w$ (center) and its upper (visibility) envelope $u$ (right). The point $\xs$ is marked by $*$. Since $\xs$ is a minimizer of $\max(g,g(\xs))$ both star-shaped envelopes are continuous.}
\label{fig:Ex1Envelope2}
\end{figure}

\subsection{A local PDE for star-shaped envelopes}

We start by establishing a first order condition for star-shaped functions.

\begin{proposition}\label{prop:SSfirstorder}
Suppose $u:\Rn\to\R$ is differentiable. Then $u$ is  star-shaped with respect to $\xs$ if and only if $\grad u(x) \cdot (x-\xs) \geq 0$ for all $x\in\R$.
\end{proposition}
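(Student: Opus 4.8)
The plan is to reduce the claim to a one-dimensional statement along rays from $\xs$, and then invoke the elementary fact that a differentiable function of one real variable is nondecreasing if and only if its derivative is nonnegative. The bridge to the geometry is Proposition \ref{prop:SSequivalence}, which tells us that $u$ is star-shaped with respect to $\xs$ precisely when $u$ is increasing along every ray emanating from $\xs$.

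First I would fix a point $x \neq \xs$ and parametrize the ray from $\xs$ through $x$ by setting $\phi(s) = u(\xs + s(x - \xs))$ for $s \geq 0$. By Proposition \ref{prop:SSequivalence}, star-shapedness of $u$ is equivalent to $\phi$ being nondecreasing for every such $x$. Since $u$ is differentiable, the chain rule gives $\phi'(s) = \grad u(\xs + s(x-\xs)) \cdot (x - \xs)$.

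Next comes the key bookkeeping step. Writing $y = \xs + s(x - \xs)$, so that $y - \xs = s(x-\xs)$, I would rewrite the derivative as $\phi'(s) = \tfrac{1}{s}\, \grad u(y)\cdot(y-\xs)$ for $s > 0$. Thus $\phi'(s)$ has the same sign as $\grad u(y)\cdot(y-\xs)$ at the corresponding point $y$ on the ray. This identity is what converts the radial directional derivative into the pointwise quantity $\grad u(x)\cdot(x-\xs)$ appearing in the statement. With it in hand, both implications follow quickly: for the forward direction, if $u$ is star-shaped then each $\phi$ is nondecreasing, so $\phi'(s)\geq 0$, and hence $\grad u(y)\cdot(y-\xs)\geq 0$ at every $y = \xs + s(x-\xs)$ with $s>0$; as $x$ ranges over $\Rn$ and $s$ over $(0,\infty)$ every point of $\Rn\setminus\{\xs\}$ is reached, while at $\xs$ the inequality is trivial, giving $\grad u(x)\cdot(x-\xs)\geq 0$ for all $x$. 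Conversely, if this inequality holds everywhere, then along any fixed ray $\phi'(s)\geq 0$ for all $s>0$, so $\phi$ is nondecreasing; thus $u$ is increasing along rays from $\xs$, and Proposition \ref{prop:SSequivalence} yields that $u$ is star-shaped.

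I do not expect a serious obstacle here, since the argument is essentially one-variable calculus. The only points requiring care are the bookkeeping that identifies $y - \xs$ as the positive multiple $s(x-\xs)$ of the ray direction, the harmless degenerate vertex $s = 0$ (equivalently $x = \xs$) where the condition holds automatically, and the use of the mean value theorem to pass between ``$\phi$ nondecreasing'' and ``$\phi' \geq 0$'', which is valid for merely differentiable --- not necessarily $C^1$ --- functions.
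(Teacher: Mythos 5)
Your proof is correct and follows essentially the same route as the paper's: both reduce the statement to monotonicity along rays from $\xs$ (via Lemma \ref{lemma:SSequivalence} / Proposition \ref{prop:SSequivalence}) and then apply one-variable calculus and the mean value theorem, with your $\tfrac{1}{s}$ rescaling identity playing the same bookkeeping role as the paper's substitution $x = y + s(\xs - y)$. The only cosmetic difference is that you prove the converse directly through the one-dimensional monotonicity criterion, whereas the paper phrases it as a proof by contradiction.
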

\begin{proof}
By the mean value theorem, given any $t\in[0,1]$ there exists $s\in (0,t)$ such that
\[
u(t\xs + (1-t)x)-u(x) = t \grad u(x+s(\xs-x)) \cdot (\xs-x).
\]
Thus if $u$ is star-shaped with respect to to $\xs$ we obtain using Lemma \ref{lemma:SSequivalence}
\[
\grad u(x+s(\xs-x)) \cdot (\xs-x) \leq 0.
\]
Taking the limit as $t\to 0$ leads to the desired inequality $\grad u(x) \cdot (x-\xs) \geq 0$.

Now, suppose that $\grad u(x) \cdot (x-\xs) \geq 0$ for all $x \in \Rn$. We argue by contradiction. Assume that $u$ is not star-shaped with respect to $\xs$. Hence, by Lemma \ref{lemma:SSequivalence}, there are $y \in \Omega$ and $t\in[0,1]$ such that
\[
u(t \xs + (1-t)y) >  u(y).
\]
Then, again by the mean value theorem,
\[
\grad u(y+s(\xs-y)) \cdot (\xs-y) > 0
\]
for some $s\in (0,t)$. However, taking $x = y+s(\xs-y)$ in $\grad u(x) \cdot (x-\xs) \geq 0$ leads to
\begin{align*}
& \grad u(y+s(\xs-y)) \cdot (y+s(\xs-y)-\xs) \geq 0\\
\Longleftrightarrow & \grad u(y+s(\xs-y)) \cdot (y-\xs)(1-s) \geq 0\\
\Longleftrightarrow & \grad u(y+s(\xs-y)) \cdot (y-\xs) \geq 0\\
\Longleftrightarrow & \grad u(y+s(\xs-y)) \cdot (\xs-y) \leq 0.
\end{align*}
We have the desired contradiction and so the proof is complete.
\end{proof}

We are interested in star-shaped functions that may not be differentiable since typically the visibility set has corners. 
Thus we need to characterize star-shaped functions in the weak sense. We will do so using viscosity solutions. Throughout the rest of this section let $\Omega$ be a bounded star-shaped domain with respect to $\xs \in \Rn$ and denote its boundary by $\partial \Omega$.

\begin{proposition}\label{prop:SSviscosityequivalence}
Suppose $u\in USC(\Omega)$. Then $u$ is star-shaped with respect to $\xs$ if and only if $u$ is a viscosity subsolution of $\grad u(x) \cdot (\xs-x) = 0$.
\end{proposition}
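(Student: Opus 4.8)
The statement is the viscosity-solution analogue of Proposition \ref{prop:SSfirstorder}, so the plan is to prove the two implications separately. The forward implication (``star-shaped $\Rightarrow$ subsolution'') is routine once one probes $u$ in the right direction; the converse is the substantive one.

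For the forward direction, assume $u$ is star-shaped and let $\phi\in C^1(\Omega)$ be such that $u-\phi$ has a local maximum at $x_0$. The key idea is to test $u$ along the ray through $x_0$ emanating from $\xs$, moving \emph{away} from $\xs$. For small $t>0$ set $w_t = x_0 + t(x_0-\xs)$; then $x_0$ lies on the segment $[\xs,w_t]$, in fact $x_0 = s\xs+(1-s)w_t$ with $s=t/(1+t)\in[0,1]$, so Lemma \ref{lemma:SSequivalence} gives $u(x_0)\le u(w_t)$. Combining this with the local maximum inequality $u(w_t)-\phi(w_t)\le u(x_0)-\phi(x_0)$ yields $\phi(w_t)-\phi(x_0)\ge u(w_t)-u(x_0)\ge 0$. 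Since $w_t-x_0 = t(x_0-\xs)$, dividing by $t$ and letting $t\to 0^+$ produces $\grad\phi(x_0)\cdot(x_0-\xs)\ge 0$, i.e. $\grad\phi(x_0)\cdot(\xs-x_0)\le 0$, which is exactly the required subsolution inequality.

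For the converse I would argue by contradiction. If $u$ is not star-shaped, Lemma \ref{lemma:SSequivalence} furnishes $y\in\Omega$ and $t\in(0,1)$ with $u(t\xs+(1-t)y)>u(y)$; writing $p_1 = t\xs+(1-t)y$ and $p_0=y$, both points lie on a common ray from $\xs$ (with $p_1$ strictly closer to $\xs$), the whole segment $[\xs,y]$ lies in $\Omega$ since $\Omega$ is star-shaped, and $u(p_1)>u(p_0)$. Parametrizing the segment by Euclidean radius as $x(\tau)=\xs+\tau e$ with $e=(y-\xs)/\abs{y-\xs}$, the function $v(\tau)=u(x(\tau))$ is upper semicontinuous with $v(\tau_1)>v(\tau_0)$ for the corresponding $\tau_1<\tau_0$. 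Since $\grad\phi(x)\cdot(\xs-x) = -\tau\,\psi'(\tau)$ for any test function with radial profile $\psi(\tau)=\phi(x(\tau))$, a violation of the subsolution property at an interior radius $\tau^*>0$ amounts to producing a local maximum of $u-\phi$ there with $\psi'(\tau^*)<0$. The plan is to take a radial profile with a small negative slope, $\psi(\tau)=-c(\tau-\tau^*)$, augmented by a quadratic penalization $\tfrac1\eta\abs{\zeta}^2$ in the directions $\zeta$ transverse to $e$, so that $\grad\phi(x^*)=\psi'(\tau^*)e=-c\,e$ and the off-ray penalization confines the maximizer near the ray.

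The hard part will be precisely this last step: because $u$ is only upper semicontinuous, I cannot simply read off an interior maximum on the segment, and I must ensure the touching point is a genuine two-sided \emph{interior} local maximum of $u-\phi$ in $\Omega$ at which the subsolution inequality can be invoked. I would handle this with the standard penalization/doubling-of-variables procedure: for each $\eta>0$ maximize $u-\phi_\eta$ over a fixed small closed ball about the segment, obtain maximizers $x_\eta$, and let $\eta\to0^+$; upper semicontinuity together with the strictness $v(\tau_1)>v(\tau_0)$ guarantees (after suitably tilting with the slope $c$) that the $x_\eta$ converge to an interior radius in $(\tau_1,\tau_0)$ and that $u-\phi_\eta$ attains a legitimate interior local maximum there. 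Applying the subsolution property at $x_\eta$ then gives $-\tau\,\psi'\le 0$ with $\psi'=-c<0$ and $\tau>0$, the desired contradiction. In effect this reduces the claim to the one-dimensional fact that an upper semicontinuous viscosity subsolution of $-v'=0$ is nondecreasing, the radial penalization being the device that transfers a one-dimensional test function into a legitimate $n$-dimensional one.
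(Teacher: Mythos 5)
Your forward implication is correct and is essentially the paper's own argument: both proofs probe the test function along the outgoing ray $t\mapsto x_0+t(x_0-\xs)$, use Lemma \ref{lemma:SSequivalence} (equivalently Proposition \ref{prop:SSequivalence}) to get $u(x_0)\le u(x_0+t(x_0-\xs))$, and pass to the limit in the difference quotient. Nothing to change there.

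The converse is where the proposal breaks, and precisely at the step you flag as ``the hard part.'' The localization mechanism you describe does not do what you claim. Take $u(x)=-\abs{x-\xs}$, which is strictly decreasing along every ray and thus maximally non-star-shaped; a correct proof must produce a violating touching point (and indeed here the subsolution inequality fails at \emph{every} $x\neq\xs$, since $u$ is smooth there and $\grad u(x)\cdot(\xs-x)=\abs{x-\xs}>0$). Your mechanism never finds one: for any admissible tilt (you need $c(\tau_0-\tau_1)<v(\tau_1)-v(\tau_0)$, hence here $c<1$, to exclude the outer cap) and for \emph{every} $\eta>0$, the maximum of $u-\phi_\eta$ over the tube $\{\tau\in[\tau_1,\tau_0],\,\abs{\zeta}\le r\}$ sits at the inner cap $\tau=\tau_1$, $\zeta=0$. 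The strictness $v(\tau_1)>v(\tau_0)$ only excludes the outer cap, and the tilt pushes maximizers outward too weakly whenever $u$ decreases along the ray at a rate exceeding $c$. An inner-cap maximizer is a boundary point of the constraint region, not a local maximum of $u-\phi_\eta$ in $\Omega$, so the subsolution property cannot be invoked there; and enlarging the tube inward only chases the maximizer toward and past $\xs$, where $\grad\phi_\eta(x)\cdot(\xs-x)=c\tau-\tfrac{2}{\eta}\abs{\zeta}^2\le 0$ holds automatically and yields no contradiction. The standard penalization lemma you invoke controls only the transverse term $\tfrac1\eta\abs{\zeta_\eta}^2$; it says nothing about the \emph{radial} location of the maximizer, which is the actual difficulty. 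Closing this gap needs a further device — e.g.\ a sup-convolution of $u$ along the ray, reducing to a Lipschitz subsolution for which ``derivative $\ge 0$ a.e.\ implies nondecreasing'' applies, or a careful largest-maximizer/jump-point selection — none of which appears in the proposal.

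There is a second, unfixable, gap at your very first step: the negation of Lemma \ref{lemma:SSequivalence} gives a violation with $t\in(0,1]$, not $t\in(0,1)$, and the case where the \emph{only} violations occur at $t=1$ cannot be recovered for USC functions. In fact the proposition as stated fails there: $u=\chi_{\{\xs\}}$ is upper semicontinuous and is a viscosity subsolution (at $\xs$ the factor $\xs-x$ vanishes, so every touching gradient satisfies the inequality; at $x\neq\xs$ the function is locally constant, forcing $\grad\phi(x)=0$), yet $\{u\le 1/2\}=\Omega\setminus\{\xs\}$ is not star-shaped with respect to $\xs$. For what it is worth, the paper's own proof of this direction is also only a sketch with the same blind spot: it tilts $u$ by a single global linear function and asserts, without justification, that $u-\phi$ has an interior local maximum at a point $x$ with $(x-\xs)\cdot(z-y)>0$; no such point exists for $\chi_{\{\xs\}}$, and that assertion is exactly the unproved localization step. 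So your route (reduction to the ray plus transverse penalization) genuinely differs in technique from the paper's (global linear tilt), but both leave the same essential step open; if $u$ is assumed continuous the $t=1$ issue disappears (a violation at $\xs$ propagates to $t<1$), but the inner-cap escape described above remains a real flaw in your argument even then.
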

\begin{proof}
Suppose $u$ is star-shaped with respect to $\xs$. Let $\phi \in C^1(\Omega)$ be such that $u-\phi$ has a local maximum at $x \in \Omega$. Without loss of generality assume that $\phi(x)=u(x)$. Then we have $u(y) \leq \phi(y)$ in a neighborhood of $x$. Since $u$ is star-shaped with respect to $\xs$, $u$ is increasing along arrays from $\xs$ to $x$ by Proposition \ref{prop:SSequivalence} and therefore
\[
u(x) \leq u(x+h(x-\xs))
\]
for $h > 0$. Hence, given the choice of $\phi$,
\[
\phi(x) \leq \phi(x+h(x-\xs))
\]
for $h > 0$ sufficiently small. Since
\[
\frac{\phi(x+h(x-\xs))-\phi(x)}{h} = \grad \phi(x+s(\xs-x)) \cdot (x-\xs) 
\]
for $s \in (0,h)$, we obtain $\grad\phi(x) \cdot (\xs-x) \leq 0$ as $h \to 0$. This shows that $u$ is a viscosity subsolution of $\grad u(x) \cdot (\xs-x) = 0$.

Suppose now that $u$ is a viscosity subsolution of $\grad u(x) \cdot (\xs-x) = 0$ and that $u$ is not star-shaped with respect to $\xs$.
Then there exists $y,z$ such that $u(y) \geq u(\xs)$ and $u(y) > u(z)$ with $y$ lying on the line segment from $\xs$ to $z$.
Without loss of generality assume that $y = \argmax_x u(x)$.
We can then construct a linear $\phi \in C^1(\Omega)$ such that $\grad \phi = (u(z)-u(y))(z-y)$ and $u-\phi$ has a local maximum at $x$ with $(x-\xs) \cdot (z-y) > 0$.
But then
\[
\grad\phi(x) \cdot (x-\xs) = (u(z)-u(y)) (z-y)\cdot (x-\xs) < 0
\]
which contradicts our assumption.
\end{proof}

We can now finally write the PDEs for the upper and lower star-shaped envelopes of $g$ with respect to $\xs$.

\begin{proposition}\label{prop:LowerEnv}
Let $g \in C(\Omega)$ be bounded by below and let $w = SS^-(g)$ be the lower star-shaped envelope $w$ of $g$ with respect to $\xs$. Assume $w$ is continuous. Then $w$ is the viscosity solution of the obstacle problem
\[
\max\{w(x)-g(x), (\xs-x)\cdot \grad w(x)\} = 0
\]
along with boundary conditions $w = g$ on $\partial \Omega$.
\end{proposition}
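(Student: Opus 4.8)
The plan is to verify the three ingredients separately: that $w$ is a viscosity subsolution, that it is a viscosity supersolution, and that it attains the boundary data. Throughout I would lean on the explicit formula \eqref{eq:lowerss_formula} from Proposition \ref{prop:LowerEnvExplicit}, which already tells me that $w \le g$ on $\Omega$ and that $w$ is star-shaped with respect to $\xs$. The boundary condition is then immediate: if $x \in \partial\Omega$, then because $\Omega$ is star-shaped with respect to $\xs$, every point $x + t(x - \xs) = \xs + (1+t)(x-\xs)$ with $t > 0$ lies outside $\Omega$, so the only admissible index in \eqref{eq:lowerss_formula} is $t = 0$ and hence $w(x) = g(x)$.

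For the subsolution inequality, let $\phi \in C^1(\Omega)$ be such that $w - \phi$ has a local maximum at $x$; I must show $\max\{w(x) - g(x),\ (\xs - x)\cdot\grad\phi(x)\} \le 0$, i.e. that both entries are nonpositive. The first, $w(x) - g(x) \le 0$, holds everywhere since $w \le g$. The second, $(\xs - x)\cdot\grad\phi(x) \le 0$, is exactly the conclusion of Proposition \ref{prop:SSviscosityequivalence}: $w$ is continuous, hence upper semicontinuous, and star-shaped, so it is a viscosity subsolution of $\grad w(x)\cdot(\xs - x) = 0$. Thus the subsolution property is essentially free.

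The supersolution inequality is where the real work lies. Let $\phi \in C^1(\Omega)$ with $w - \phi$ attaining a local minimum at $x$, normalized so that $\phi(x) = w(x)$ and $\phi \le w$ near $x$; I must show $\max\{w(x)-g(x),\ (\xs-x)\cdot\grad\phi(x)\} \ge 0$, i.e. that at least one entry is nonnegative. If $w(x) \ge g(x)$ there is nothing to prove, so suppose $w(x) < g(x)$. Then the minimum in \eqref{eq:lowerss_formula} is \emph{not} attained at $t = 0$, so there is a minimizer $y^* = x + t^*(x - \xs)$ with $t^* > 0$. The key geometric observation is that $w$ is then constant along the outward ray from $x$: writing $x_s = x + s(x-\xs)$, the admissible ray from $x_s$ in \eqref{eq:lowerss_formula} is the tail $\{x + \sigma(x-\xs) : \sigma \ge s\}$ of the ray from $x$, and since the global minimizer sits at $\sigma = t^* \ge s$ for $0 \le s \le t^*$, one gets $w(x_s) = w(x)$ on that segment. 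Feeding this into the local minimum gives $\phi(x_s) \le w(x_s) = w(x) = \phi(x)$ for all small $s > 0$, so the one-sided derivative of $s \mapsto \phi(x_s)$ at $s = 0$ is nonpositive; as $\phi$ is $C^1$ this yields $\grad\phi(x)\cdot(x-\xs) \le 0$, that is $(\xs - x)\cdot\grad\phi(x) \ge 0$, completing the supersolution check.

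The main obstacle, and the only step requiring care, is the constancy claim $w(x_s) = w(x)$ for $0 \le s \le t^*$: it rests on correctly identifying the admissible ray from $x_s$ as the tail of the ray from $x$ and on the fact that the minimizer persists under the small outward shift. The remaining points — the boundary identification and the two directional-derivative computations — are routine one-variable arguments, and the subsolution half is a direct appeal to Proposition \ref{prop:SSviscosityequivalence}.
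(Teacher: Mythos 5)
Your proof is correct, but it takes a genuinely different route from the paper's. The paper's proof is a two-line appeal to Perron's method: since Proposition \ref{prop:LowerEnvExplicit} shows $w$ is itself star-shaped and lies below $g$, it equals the supremum of all upper semicontinuous star-shaped functions below $g$; by Proposition \ref{prop:SSviscosityequivalence} that class is exactly the class of viscosity subsolutions of $(\xs-x)\cdot\grad u=0$ lying below the obstacle, and Perron's method identifies such a supremum with the viscosity solution of the obstacle problem. You share the subsolution half (the same appeal to Proposition \ref{prop:SSviscosityequivalence} together with $w\le g$), but you replace the Perron step by a direct verification of the supersolution inequality from the explicit formula: when $w(x)<g(x)$ the minimizer in \eqref{eq:lowerss_formula} sits at some $t^*>0$, the admissible ray from $x_s=x+s(x-\xs)$ is the tail of the ray from $x$ (indeed $x_s+\tau(x_s-\xs)=x+\bigl(s+\tau(1+s)\bigr)(x-\xs)$ parametrizes exactly $\sigma\ge s$), so the global minimizer $t^*\ge s$ remains admissible and $w(x_s)=w(x)$ for $0\le s\le t^*$; the one-sided derivative of the test function along this segment then gives $(\xs-x)\cdot\grad\phi(x)\ge 0$. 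This computation is sound. What your route buys: it is self-contained, avoiding Perron's method as a black box (whose supersolution half, the bump argument, is essentially the geometric computation you carried out explicitly), and it actually verifies the boundary condition $w=g$ on $\partial\Omega$, which the paper's proof never addresses. What the paper's route buys: brevity, and independence from the fine structure of the formula. Note that both proofs share the same unstated assumption, namely that the minimum in \eqref{eq:lowerss_formula} is attained (your minimizer $y^*$, the paper's $\argmin$ in Proposition \ref{prop:LowerEnvExplicit}), so your argument introduces no gap beyond what the paper itself tolerates.
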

\begin{proof}
By Proposition \ref{prop:LowerEnvExplicit}, $w$ is star-shaped with respect to $\xs$ and therefore we can write
\[
w(x) = \sup \{ v(x) \mid \text{ $v\in USC(\Omega)$ is star-shaped with respect to $\xs$ and $v \leq g$}\}.
\]
Now, according to Proposition \ref{prop:SSviscosityequivalence}, $w$ is precisely the supremum of all subsolutions of the PDE and so the proof follows directly from Perron's method.
\end{proof}

\begin{proposition}\label{prop:UpperEnv}
Let $g\in C(\Omega)$ and let $u = SS^+(g)$ be the upper star-shaped envelope (visibility) of $g$ with respect to $\xs$. Then $u$ is the viscosity solution of the obstacle problem
\bq\label{PDE:visibility}
\min\{u(x)-g(x), (x-\xs)\cdot \grad u(x)\} = 0,
\eq
along with $u(\xs) = g(\xs)$.
\end{proposition}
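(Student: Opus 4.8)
The plan is to verify directly that the explicit representation $u=SS^+(g)$ from Proposition \ref{prop:UpperEnvExplicit} is simultaneously a viscosity subsolution and supersolution of \eqref{PDE:visibility}, using the formula \eqref{eq:explicit_visibility} together with the monotonicity of $u$ along rays. Since the operator $F[u](x)=\min\{u(x)-g(x),(x-\xs)\cdot\grad u(x)\}$ is built from the obstacle term $u-g$ and the radial transport term, I will split every test-point argument according to whether $x$ lies in the contact set $\{u=g\}$ or in the free region $\{u>g\}$. Two structural facts are available from the outset: $u\ge g$ on $\Omega$ and $u$ is star-shaped with respect to $\xs$ (both from Proposition \ref{prop:UpperEnvExplicit}), and $u$ is continuous (Proposition \ref{prop:continuity}). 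The normalization $u(\xs)=g(\xs)$ is immediate from \eqref{eq:explicit_visibility}, since the only admissible $y$ when $x=\xs$ is $y=\xs$.

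For the supersolution property, let $\phi\in C^1(\Omega)$ be such that $u-\phi$ has a local minimum at $x$. The first entry of the minimum is nonnegative because $u\ge g$. For the second entry I would parametrize the ray by $r(s)=\xs+s(x-\xs)$ and move inward, $s<1$: monotonicity of $u$ along rays (Proposition \ref{prop:SSequivalence}) gives $u(r(s))\le u(x)$, and the local-minimum inequality then forces $\phi(r(s))\le\phi(x)$ for $s<1$ near $1$; dividing by $s-1<0$ and letting $s\to 1^-$ yields $(x-\xs)\cdot\grad\phi(x)\ge 0$. Hence $\min\{u(x)-g(x),(x-\xs)\cdot\grad\phi(x)\}\ge 0$. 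At $x=\xs$ the radial term vanishes identically and this is trivial.

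For the subsolution property, let $u-\phi$ have a local maximum at $x$. If $u(x)=g(x)$ the first entry of the minimum is zero and there is nothing to prove. The crux is the free region $\{u>g\}$, where I will use the explicit formula: if $u(x)>g(x)$ then along $r(s)=\xs+s(x-\xs)$ the maximum of $g$ over $s\in[0,1]$ is attained at some $\sigma_0<1$, and because $g(r(1))=g(x)$ is strictly below this maximal value, continuity of $g$ produces a whole (two-sided, for interior $x$) neighborhood of $s=1$ on which $s\mapsto u(r(s))$ is constant. The local-maximum inequality then makes $s=1$ a local minimum of $s\mapsto\phi(r(s))$, so $(x-\xs)\cdot\grad\phi(x)=0\le 0$, and again the minimum is $\le 0$.

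The main obstacle is precisely this last step: showing that $u$ is locally constant along rays throughout the free region, which is the viscosity incarnation of the complementarity condition $(x-\xs)\cdot\grad u=0$ where $u>g$. Everything else reduces to the two soft inputs $u\ge g$ and star-shapedness, and no difficulty is expected at the viewpoint $\xs$, where both the obstacle condition and the radial term are controlled automatically. In contrast with the lower envelope (Proposition \ref{prop:LowerEnv}), a symmetric Perron argument is not directly available, because star-shapedness corresponds to a \emph{subsolution} property of the radial equation (Proposition \ref{prop:SSviscosityequivalence}) rather than a supersolution one; this is why the direct verification via \eqref{eq:explicit_visibility} is the natural route. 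Uniqueness, if desired, would follow from a comparison principle for the degenerate elliptic operator $F$.
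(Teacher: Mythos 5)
Your proof is correct, but it is a genuinely different argument from the paper's. The paper proves Proposition \ref{prop:UpperEnv} by a sign-flip duality: writing $-u$ as the supremum of functions $V \leq -g$ whose negatives are star-shaped, it invokes the mirrored version of Proposition \ref{prop:SSviscosityequivalence} and then repeats the Perron argument of Proposition \ref{prop:LowerEnv} for the equation $\max\{U+g,\,(x-\xs)\cdot\grad U\}=0$, finally negating back to obtain \eqref{PDE:visibility}. You instead verify directly that the explicit ray formula \eqref{eq:explicit_visibility} is a viscosity sub- and supersolution: the supersolution half from $u\geq g$ plus one-sided monotonicity along rays (Proposition \ref{prop:SSequivalence}), and the subsolution half from the key observation that in the free region $\{u>g\}$ the maximum along the ray is attained at some $\sigma_0<1$, so continuity of $g$ forces $u$ to be constant along the ray near $x$, making the radial derivative of any test function vanish (in fact only the one-sided, inward constancy is needed: it already gives $(x-\xs)\cdot\grad\phi(x)\leq 0$). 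Your route is more self-contained and makes the complementarity structure transparent — it shows exactly where each branch of the $\min$ is active — at the cost of leaning on the explicit formula rather than the envelope definition; the paper's route is shorter given the machinery already built, and its Perron characterization of $SS^+(g)$ as an extremal solution is reused later in the proof of the comparison principle (Proposition \ref{prop:comparison}). One remark of yours is off the mark: you assert that a symmetric Perron argument ``is not directly available'' because star-shapedness is a subsolution-type property; this is precisely the obstruction the paper circumvents by negating, since $-v$ star-shaped (for $v\geq g$) is equivalent to $-v$ being a subsolution of the reflected radial equation, so the supremum $-u$ of such functions is again a Perron supremum of subsolutions. So the Perron route does work; it just requires the sign flip rather than a direct ``infimum of supersolutions'' formulation.
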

\begin{proof}
By definition the upper star-shaped envelope is given by
\[
u = \inf  \{ v(x) \mid v(y)  \geq g(y) \text{ for all }y, \text{ $v$ is star-shaped with respect to $\xs$}  \}.
\]
We observe that this is equivalent to
\[
-u = \sup  \{ v(x) \mid v(y)  \leq -g(y) \text{ for all }y, \text{ $-v$ is star-shaped with respect to $\xs$} \}\\
\]
and so $-u$ is the solution of
\[
\max\{U-(-g),-(\xs-x)\cdot \grad U(x)\} = 0,
\]
by a similar reasoning to the one in Proposition \ref{prop:LowerEnv}. Now, since the equation can be rewritten as
\[
\min\{-U-g,(\xs-x)\cdot \grad U(x)\} = 0,
\]
we conclude that $u$ is the solution of
\[
\min\{u-g,(x-\xs)\cdot \grad u(x)\} = 0
\]
as desired.
\end{proof}

\begin{remark}
Contrary to Proposition \ref{prop:LowerEnv} we do not need to assume the continuity of the start-shaped envelope as this follows directly from the assumptions on $g$ (see Proposition \ref{prop:continuity}).
\end{remark}

\subsection{Comparison Principle}

An important property of elliptic equations, from which uniqueness follows, is the comparison principle that states that subsolutions lie below supersolutions. In addition, it also plays a crucial role when establishing the convergence of approximation schemes using the the theory of Barles and Souganidis \cite{BSnum}, which we intend to do later on. However, in such setting, the comparison principle required is a strong comparison principle: The boundary conditions are satisfied in the viscosity sense. In general, such a comparison is only available when the solutions are continuous up to the boundary (see \cite{CIL} for more details).


We focus our attention in PDE \eqref{PDE:visibility} as it is the one we are most interested in: Its solution allows us to determine the visibility set. A strong comparison principle is however not satisfied: Notice that any function that is nonincreasing along any direction away from $\xs$ is a subsolution and thus given any supersolution we can always construct a subsolution that lies above it by adding a large enough constant. We can however circumvent this by requiring that the subsolution $u$ satisfies $u(\xs) \leq g(\xs)$. This will also prove to be enough to establish convergence of our numerical scheme. Intuitively, imposing that the subsolution $u$ satisfies $u(\xs) \leq g(\xs)$ guarantees that $u$ lies below $g$. This, together with the fact that the solution of \eqref{PDE:visibility} is the infimum of all supersolutions according to Perron's method, is enough to reach the desired conclusion.

\begin{proposition}\label{prop:comparison}
Let $g \in C(\Omega)$. Let $u$ be a viscosity subsolution of \eqref{PDE:visibility} such that $u(\xs) \leq g(\xs)$ and let $v$ be a supersolution of \eqref{PDE:visibility}. Then $u \leq v$ in $\Omega$.
\end{proposition}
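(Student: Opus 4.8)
The plan is to sidestep the failed strong comparison principle by comparing both $u$ and $v$ to the explicit solution $\bar u := SS^+(g)$, which by Proposition \ref{prop:UpperEnv} solves \eqref{PDE:visibility} and by Proposition \ref{prop:UpperEnvExplicit} equals $\bar u(x) = \max\{g(\xs + t(x-\xs)) : t \in [0,1]\}$. I would prove the two one‑sided bounds
\[
u \le SS^+(g) \le v \quad \text{on } \Omega
\]
and chain them. One caveat on reading the preceding heuristic: ``$u$ lies below $g$'' must be interpreted as $u \le SS^+(g)$, not $u \le g$. Indeed $SS^+(g) \ge g$, usually strictly, and $\bar u$ itself is a subsolution with $\bar u(\xs)=g(\xs)$ yet $\bar u \ge g$, so a subsolution satisfying the constraint need not lie below $g$.

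The upper bound $SS^+(g) \le v$ is the easy half. By Perron's method, exactly as in the proof of Proposition \ref{prop:UpperEnv}, the solution $SS^+(g)$ is the infimum of all supersolutions of \eqref{PDE:visibility}; since $v$ is one such supersolution, $SS^+(g)\le v$. Alternatively one argues directly from the supersolution inequality: $v \ge g$ everywhere (the standard obstacle property, obtained by minimizing $v(x) + |x-x_0|^2/\varepsilon$ to manufacture a smooth test function touching $v$ from below near any $x_0$), and $v$ is star‑shaped with respect to $\xs$, whence $v$ dominates the least star‑shaped majorant $SS^+(g)$ of $g$.

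The crux is the lower bound $u \le SS^+(g)$, and here the constraint $u(\xs)\le g(\xs)$ is essential: it supplies the data at the common origin $\xs$ of the characteristics (rays) of the transport part $(x-\xs)\cdot\grad u$. I would reduce to one dimension along each ray. Fixing a unit direction $\omega$ and writing $\phi(s)=u(\xs+s\omega)$, $\hat g(s)=g(\xs+s\omega)$, a transverse‑penalization argument --- extend a scalar test function $\psi$ to $\Phi(x)=\psi(\langle x-\xs,\omega\rangle)+C\,|x-\xs-\langle x-\xs,\omega\rangle\,\omega|^2$ and let $C$ be large so the two‑dimensional maximum localizes onto the ray --- shows that $\phi$ is a one‑dimensional viscosity subsolution of $\min\{\phi-\hat g,\ \phi'\}=0$ on $(0,R)$, with $\phi(0)=u(\xs)\le g(\xs)=\hat g(0)$. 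For this scalar obstacle problem the comparison is transparent: wherever $\phi>\hat g$ the subsolution property forces $\phi'\le 0$, so $\phi$ is nonincreasing there, and starting from $\phi(0)\le\hat g(0)$ the value $\phi(s)$ can never exceed $\max_{[0,s]}\hat g$, which by Proposition \ref{prop:UpperEnvExplicit} is exactly $SS^+(g)(\xs+s\omega)$. Since $\Omega$ is star‑shaped the whole segment lies in $\Omega$, so the argument runs on $[0,R]$ using only the vertex datum at $\xs$ --- which is why no boundary condition on $u$ is needed. Varying $\omega$ yields $u\le SS^+(g)$ on $\Omega$.

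I expect the main obstacle to be making this scalar comparison fully rigorous at the available regularity: $u$ is merely upper semicontinuous and $g$ merely continuous, so $\{\phi>\hat g\}$ need not be open and the free‑boundary point $s_0=\sup\{s:\phi(s)\le\hat g(s)\}$ needs care. The classical fact that an upper semicontinuous viscosity subsolution of $\phi'=0$ on an open interval is nonincreasing handles the interior, while the borderline behaviour at $s_0$ is dispatched by a standard viscosity perturbation (replace $\hat g$ by $\hat g+\delta$, or add $-\delta s$ to enforce strictness, and let $\delta\to 0$). A secondary technical point is the justification of the restriction‑to‑rays step, i.e.\ that the transverse penalization indeed confines the maximizer to the ray so that the scalar subsolution inequality is inherited.
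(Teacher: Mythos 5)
Your proposal is correct, and its easy half coincides with the paper's: the bound $SS^+(g)\le v$ is obtained there in exactly the same way, from Perron's characterization of $SS^+(g)$ as the infimum of supersolutions (Propositions \ref{prop:UpperEnv} and \ref{prop:UpperEnvExplicit}). Where you genuinely differ is the subsolution bound. The paper never isolates the inequality $u\le SS^+(g)$; it argues by contradiction: from $u(x)>v(x)\ge \max\{g(x),g(\xs)\}$ and the constraint $u(\xs)\le g(\xs)$ it deduces $u(x)>g(x)$ and $u(x)>u(\xs)$, and then asserts in a single line the existence of a linear test function $\phi$ such that $u-\phi$ has a local maximum at a point $y$ where simultaneously $u(y)>g(y)$ and $(y-\xs)\cdot\grad\phi(y)>0$, contradicting the subsolution property (the same device as in the proof of Proposition \ref{prop:SSviscosityequivalence}). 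Your route --- restricting $u$ to each ray through $\xs$ by transverse quadratic penalization, then running a one-dimensional comparison for $\min\{\phi-\hat g,\phi'\}=0$ with vertex datum $\phi(0)\le\hat g(0)$ --- instead proves the stronger intermediate inequality $u\le SS^+(g)$ and chains. Both arguments hinge on the same delicate point: the subsolution inequality only yields information at touching points where $u>g$, and one must arrange for the touching point to satisfy this; the paper's linear $\phi$ is asserted rather than constructed, whereas you confront the issue explicitly, and the perturbations you flag (the $-\delta s$ tilt for strictness, penalization to keep maximizers interior and on the ray) are standard and do close the gaps. So your argument is longer but more complete precisely where the paper is terse, and it makes the role of the vertex condition transparent. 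Your opening caveat is also apt: the paper's heuristic that the constraint forces ``$u$ to lie below $g$'' cannot be read literally (witness $SS^+(g)$ itself, a subsolution with $SS^+(g)(\xs)=g(\xs)$ but $SS^+(g)\ge g$); $u\le SS^+(g)$ is the correct interpretation, and it is what your proof delivers.
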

\begin{proof}
Suppose there exists $x \in \Omega$ such that $u(x) > v(x)$ by contradiction. Since $v$ is a supersolution \eqref{PDE:visibility},
\[
v(x) \geq SS^+(g)(x) = \max \{ g(y) \mid y = \xs + t(x-\xs) \in \Omega, t\in[0,1]\}
\]
where we used Perron's characterization of $SS^+(g)$ and Proposition \ref{prop:UpperEnvExplicit}. In particular, we have $v(x) \geq g(x)$ and $v(x) \geq g(\xs)$. Hence $u(x) > g(x)$ and $u(x) >~u(\xs)$ by assumption on $u$. Therefore there exists a linear function $\phi$ such that $u-\phi$ has a local maximum at $y$ with $u(y) > g(y)$ and $(y-\xs)\cdot \grad \phi(y) = u(x)-u(\xs)> 0$. We have derived a contradiction with the assumption that $u$ is a subsolution of \eqref{PDE:visibility} and the proof is complete.
\end{proof}

\subsection{Visibility from multiple viewpoints}

We are now interested in the visibility set from multiple viewpoints where a point is consider visible if is seen by at least one viewpoint. We follow the same ideas as before, starting by generalizing the definition of star-shaped set.

\begin{definition}\label{def:SSone}
We say that a set $S \subset \Rn$ is star-shaped with respect to $\{\xs_1,\ldots,\xs_r\}$ if
\[
y \in S \implies  \exists_{\xs\in\{\xs_1,\ldots,\xs_r\}} \forall_{t\in[0,1]} \: ty + (1-t)\xs \in S.
\]
\end{definition}

As in \autoref{sec:SS} we can define star-shaped functions with respect to $\{\xs_1,\ldots,\xs_n\}$ according to Definition \ref{def:SSone}. More importantly, Proposition \ref{prop:SSfirstorder} can be generalized.

\begin{proposition}
Suppose $u:\Rn\to\R$ is differentiable and bounded by below. Then $u$ is star-shaped with respect to $\{\xs_1,\ldots,\xs_r\}$ according to Definition \ref{def:SSone} if and only if
\[
\max_{i=1,\ldots,r} \min_{t\in[0,1]} \grad u(\xs_i+t(x-\xs_i)) \cdot (x-\xs_i) \geq 0.
\]
\end{proposition}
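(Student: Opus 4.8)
The plan is to reduce the first-order statement to a zero-order one, exactly mirroring the route from Lemma \ref{lemma:SSequivalence} to Proposition \ref{prop:SSfirstorder}, and only then to confront the new $\max_i$--$\min_t$ structure. First I would establish the multi-viewpoint analogue of Lemma \ref{lemma:SSequivalence}: $u$ is star-shaped with respect to $\{\xs_1,\dots,\xs_r\}$ if and only if for every $y$ there is an index $i$ with $u(\xs_i+t(y-\xs_i))\le u(y)$ for all $t\in[0,1]$. This follows by unwinding Definition \ref{def:SSone} at the single level $\alpha=u(y)$ and observing, as in the single-point case, that star-shapedness of every sublevel set is equivalent to this inequality holding at the threshold $\alpha=u(y)$ for each $y$ (if it holds there, it holds for all $\alpha\ge u(y)$, and is vacuous below).

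Next I would record the analytic meaning of the inner quantity. Writing $h_i(t)=u(\xs_i+t(x-\xs_i))$, the chain rule gives $h_i'(t)=\grad u(\xs_i+t(x-\xs_i))\cdot(x-\xs_i)$, so
\[
\min_{t\in[0,1]}\grad u(\xs_i+t(x-\xs_i))\cdot(x-\xs_i)\ge 0
\]
holds if and only if $h_i$ is nondecreasing on $[0,1]$, i.e. if and only if $u$ is nondecreasing along the entire segment from $\xs_i$ to $x$. Thus the displayed condition reads: for every $x$ there is an index $i$ for which $u$ is nondecreasing along $[\xs_i,x]$. The direction ``condition $\Rightarrow$ star-shaped'' is then immediate, since $h_i$ nondecreasing gives $h_i(t)\le h_i(1)=u(x)$, which is precisely the zero-order inequality of the first step. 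Boundedness below is used only to keep the relevant envelopes well defined, as in Proposition \ref{prop:LowerEnvExplicit}, and plays no further role once the zero-order condition is available.

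The converse is the heart of the matter and where I expect the real work. Fixing $x$, the zero-order condition supplies an index $i_0$ with $h_{i_0}\le u(x)=h_{i_0}(1)$, so $u$ attains its maximum over $[\xs_{i_0},x]$ at the far endpoint; the task is to upgrade this to genuine monotonicity of $h_{i_0}$ (or of $h_j$ for some other $j$). I would argue by contradiction: if $h_{i_0}$ is not nondecreasing, then differentiability together with $h_{i_0}(1)=\max h_{i_0}$ forces an interior minimizer $c\in(0,1)$ of $h_{i_0}$ with a strictly larger value earlier on the segment; applying the zero-order condition at $y_c=\xs_{i_0}+c(x-\xs_{i_0})$ and showing its witnessing viewpoint must again be $i_0$ yields the contradiction.

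The genuinely delicate point -- and the main obstacle -- is exactly this last step. A priori the zero-order condition at $y_c$ could be witnessed by a different viewpoint $\xs_j$, reaching $y_c$ along a transversal direction, in which case no contradiction is produced and one only learns that $y_c$ ``sees'' $\xs_j$; indeed the $\max_i$--$\min_t$ structure makes the value-along-segment bound (staying below the endpoint) strictly weaker than segment monotonicity, and the two coincide only after the index is pinned down consistently. Ruling this out -- or iterating to a viewpoint that does provide monotonicity -- is where the global (for all $x$) nature of the hypothesis and the differentiability of $u$ must be used in an essential way, and where a naive pointwise limit of the single-point argument (taking $t\to0$ as in Proposition \ref{prop:SSfirstorder}) does not suffice. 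I would therefore concentrate most of the effort on organizing this reduction so that the chosen index is forced to be consistent along the whole segment, while checking the boundary cases $t=0$ (the vertex $\xs_i$, where $h_i'(0)=\grad u(\xs_i)\cdot(x-\xs_i)$ enters the minimum) and $x=\xs_i$ separately.
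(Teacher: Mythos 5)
The paper offers no proof of this proposition to compare against: it is stated bare, followed only by the heuristic remark about the role of $\min_t$. Your reductions are correct as far as they go --- the multi-viewpoint zero-order lemma, the observation that $\min_{t\in[0,1]}\grad u(\xs_i+t(x-\xs_i))\cdot(x-\xs_i)\geq 0$ says exactly that $u$ is nondecreasing along the whole segment $[\xs_i,x]$, and the implication (gradient condition) $\Rightarrow$ (star-shaped) are all fine. But the converse, which you explicitly leave open, is a genuine gap, and it is not one that more careful bookkeeping of the kind you sketch will close.

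The witness-switching problem you identify is the whole problem. The zero-order property (every sublevel set star-shaped with respect to $\{\xs_1,\dots,\xs_r\}$) is equivalent to the identity $u=\min_i SS^+_{\xs_i}(u)$, i.e. to $u$ being a pointwise minimum of functions each star-shaped with respect to a single viewpoint; this is precisely the structure behind the paper's own formula $SS^+_{\{\xs_1,\ldots,\xs_r\}}(g)=\min_i SS^+_{\xs_i}(g)$. For such minima the witnessing index really does change from point to point: with two viewpoints $A,B$ in the plane, take $v_A$ increasing along rays from $A$ but with a steep angular ``valley'' (and similarly $v_B$ for $B$), arranged so that the segment $[A,x]$ passes through a region where $\min(v_A,v_B)=v_B$ and $v_B$ decreases along it, while $[B,x]$ passes through a region where the minimum equals $v_A$ and $v_A$ decreases along it. Then $u=\min(v_A,v_B)$ is bounded below and star-shaped with respect to $\{A,B\}$ (each point of $S_\alpha(u)=S_\alpha(v_A)\cup S_\alpha(v_B)$ sees $A$ or $B$ inside the set), yet $u$ is monotone along neither segment ending at $x$, so the displayed condition fails at $x$. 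The single defect of this example is nondifferentiability on the branch-switching locus, and that can be engineered away by making $v_A$ and $v_B$ coincide on an open transition band (so that the minimum is smooth there) while keeping both dips away from the band. So the direction you are struggling with is not merely delicate: no pointwise contradiction argument that tries to pin the witness at an interior minimizer can succeed, because the hypothesis genuinely does not pin the witness, and the statement itself appears to be false as written. In the single-viewpoint case (Proposition \ref{prop:SSfirstorder}) the witness is forced, which is why that proof works; an honest multi-viewpoint write-up must either add hypotheses forcing a consistent witness along the segment or flag this direction as unproven.
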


\begin{remark}
The presence of the minimum in $t$ may not appear obvious at first, but it is crucial here. In order for a point $x$ to be visible from $\xs_i$ then $u$ must be increasing along the ray from $\xs_i$ to $x$ which guarantees that all the points along the ray will be in the visible set. Without the minimum in $t$ a point $x$ could be consider visible by first moving along a ray towards $\xs_i$ and then follow a different viewpoint.
\end{remark}

In this case, $u = SS^+_{\{\xs_1,\ldots,\xs_r\}}(g)$ is the solution of the following PDE
\bq\label{PDE:visibility_alo}
\min\{u(x)-g(x), \max_{i = 1,\ldots,r}\min_{t\in[0,1]} \grad u(\xs_i+t(x-\xs_i)) \cdot (x-\xs_i)\} = 0.
\eq
Despite being a non-local PDE, unlike the previous ones with a single viewpoint $\xs$, we still have a fast solver available: The solution $u$ is given by
\[
u = \min_{i = 1,\ldots,r} u_i,
\]
where $u_i$ is the solution of 
\[
\min\{u(x)-g(x), (x-\xs_i)\cdot \grad u(x)\} = 0.
\]
This follows from noticing that
\[
SS^+_{\{\xs_1,\ldots,\xs_r\}}(g) = \min_{i = 1,\ldots,r} SS^+_{\xs_i}(g).
\]
In addition, the following solution formula is also available
\[
u(x) = \min_{i = 1,\ldots,r}\max \{ g(y) \mid y \in \xs_i + t(x-\xs_i) \in \Omega, t\in[0,1]\}.
\]

At this point, a natural question to ask is what other visibility definitions will lead to PDEs following the approach taken here. For instance, given $3$ viewpoints $\{\xs_1,\xs_2,\xs_3\}$ a point can be considered visible if (i) it is seen by at least two viewpoints; (ii) it is seen by $\xs_1$ or both $\xs_2$ and $\xs_3$ (iii) it is seen by all viewpoints. All these can be computed efficiently computed by first determining the visibility of each viewpoint and then taking the appropriate combination of maximums and minimums. However, determining a corresponding PDE following the approach considered here will in general fail. Consider for instance case (iii) where a point is visible if it is seen by all viewpoints. In this setting, one can no longer define star-shaped like sets as the visibility set may be disconnected (see Figure \ref{fig:ex_multiple_viewpoints} for a simple example with two viewpoints). The fundamental difference is that it is no longer true that if a point $x$ is visible then all points along the ray from $x$ to the viewpoint are visible.

\begin{figure}[htp]
\centering
\subfigure{\includegraphics[width=\widthtwofigures]{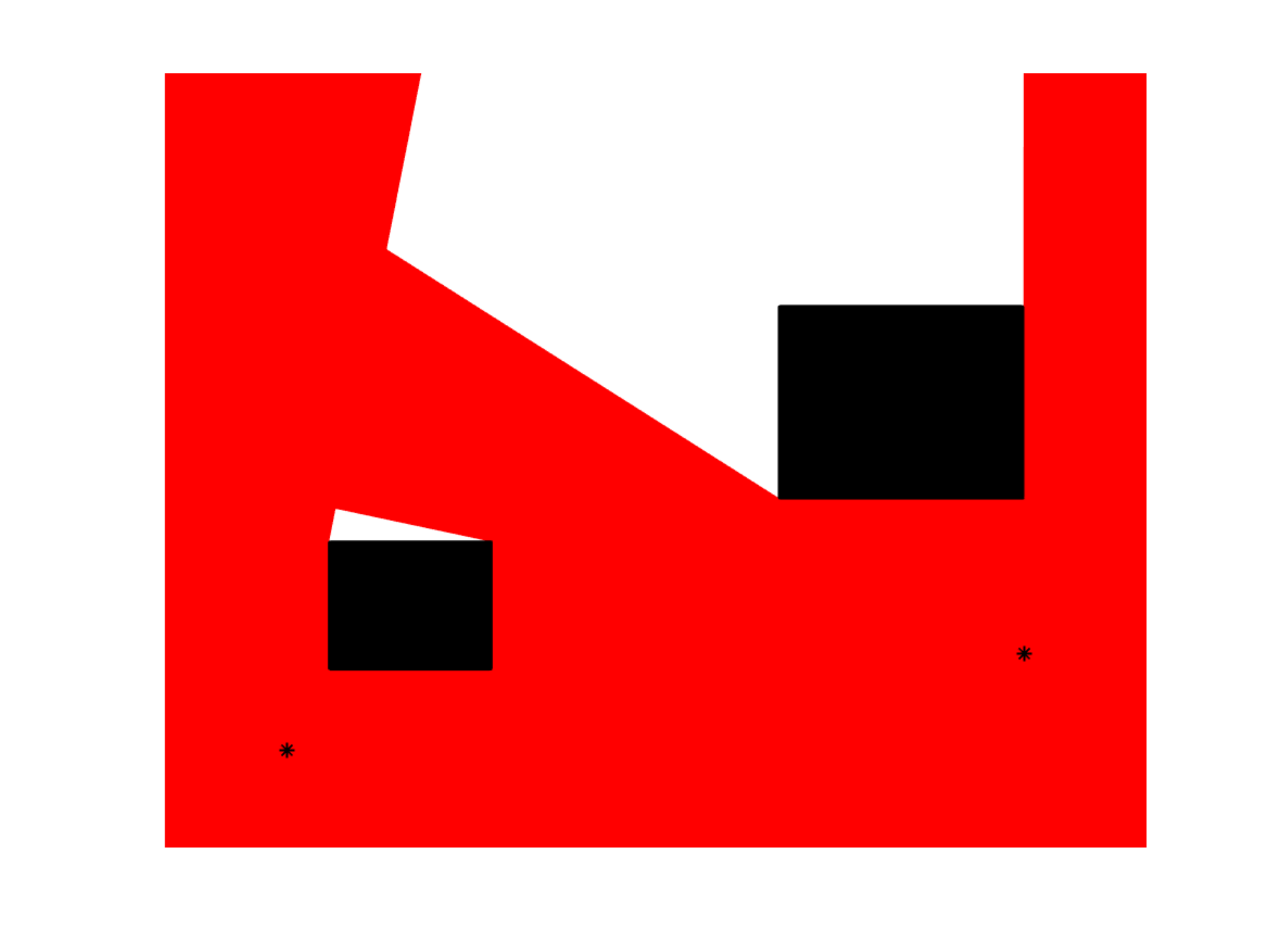}}
\subfigure{\includegraphics[width=\widthtwofigures]{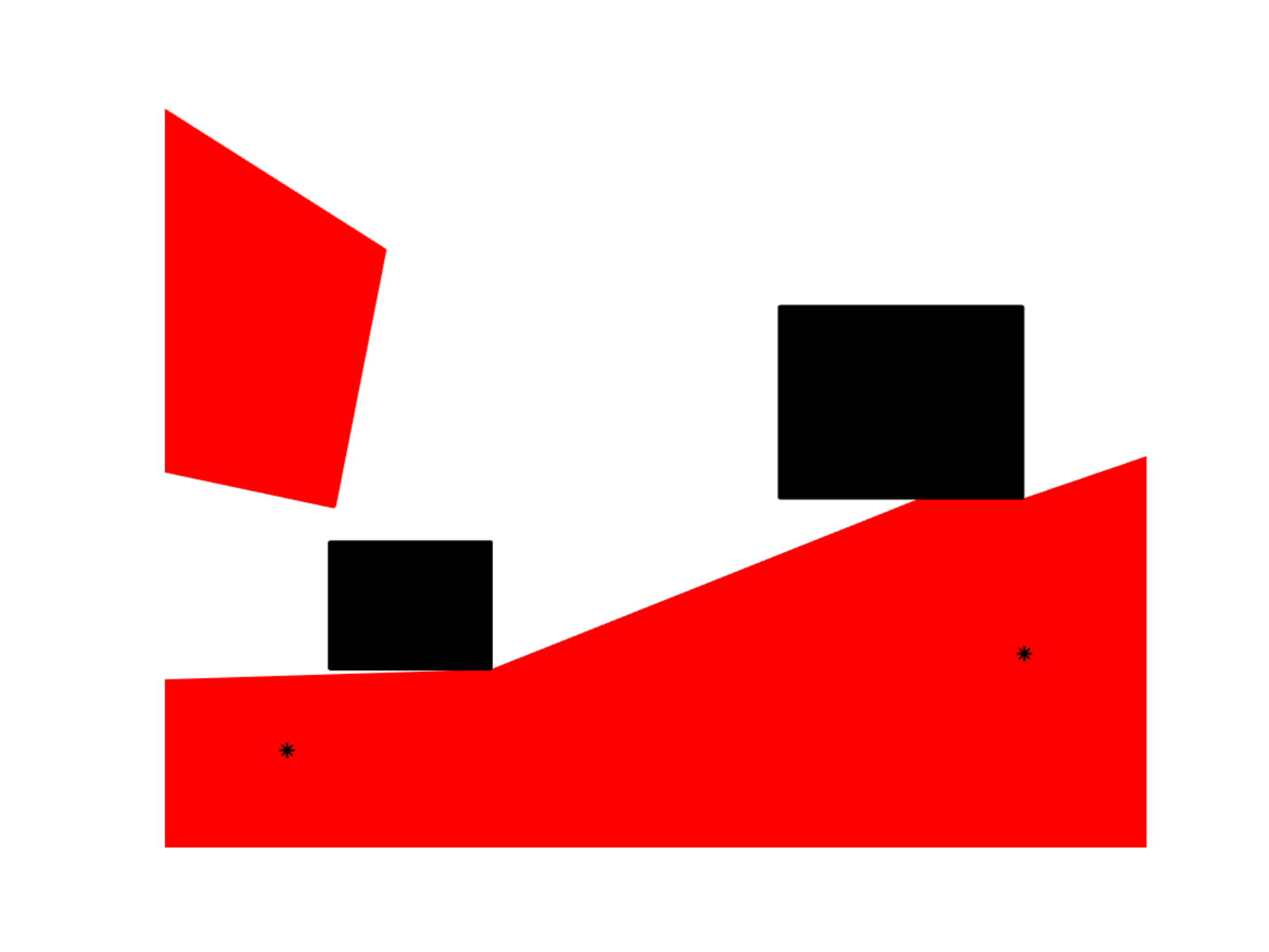}}
\caption{Visibility set from multiple viewpoints when a point is visible if it seen by at least one viewpoint (left) and by all the viewpoints (right).}
\label{fig:ex_multiple_viewpoints}
\end{figure}

\section{Convergent finite difference schemes}\label{sec:scheme}

In this section, we discuss the numerical schemes used to solve the PDEs introduced in the previous section, focusing our attention on the visibility PDE \eqref{PDE:visibility}.
As we will see, the scheme proposed here is degenerate elliptic finite difference schemes for which there exists a well established convergence framework.

Before we begin we introduce some notation. For simplicity, we will assume we are working on the hypercube $[-1, 1]^n \subseteq \Rn$. We write $x = (x_1, \ldots, x_n) \in [-1,1]^n$. The domain is discretized with a uniform grid, resulting in the following spatial resolution:
\[
h \equiv \frac{2}{N-1},
\]
where $N$ is the number of grid points used to discretize $[-1, 1]$. We denote by $\Omega^h$ the computational domain which in our case reduces to $[-1, 1]^n \cap h\Z^n$.

Our schemes are written as the operators $F^h[u] : C(\Omega^h) \to C(\Omega^h)$, where $C(\Omega^h)$ is the set of grid functions $u : \Omega^h \to \R$. We assume they have the following form:
\[
F^h[u](x) = F^h(u(x), u(x) - u(\cdot)) \quad \text{for } x \in \Omega^h
\]
where $u(\cdot)$ corresponds to the value of u at points in $\Omega^h$.

\subsection{Finite difference for the visibility PDE}

For simplicity, we present the scheme in the two-dimensional setting. The generalization to higher dimensions is straightforward.

Let $x \in \Omega^h$. Define the vector $\vs = x-\xs$ for $x\in\Omega^h$. If $x$ is one of the four grid points enclosing $\xs$, let $\tilde{x} = \xs$. Otherwise, let $\tilde{x}$ denote the intersection between the line that passes through $x$ and $\xs$ and a line segment formed by 2 of the 8 neighbors of $x$. 
Then $u_{\vs}(x) = (x-\xs) \cdot \grad u(x)$ and its upwind approximation is given by
\[
u^h_v(x) \equiv \frac{u(x)-\mathcal{I}_h u(\tilde{x})}{\abs{x-\tilde{x}}},
\]
where $\mathcal{I}_h$ is the piecewise linear Lagrange interpolant. The numerical scheme for the visibility PDE \eqref{PDE:visibility} is then given by
\bq\label{scheme:visibility}
F^h[u](x) = \min\{u(x)-g(x), u^h_v(x)\},	 \quad x \in \Omega^h.
\eq

\subsection{Fast sweeping solver}

We implement a fast sweeping solver to compute the solutions of \eqref{scheme:visibility}. Solving the equation $F^h[u](x) = 0$ for the reference variable, $u(x)$, leads to the update formula
\[
u(x) = \max\{g(x),\mathcal{I}_h u(\tilde{x})\},
\]
where $\tilde{x}$ was defined in the previous section. Since all characteristic are straight lines and flow away from $\xs$, the domain $\Omega^h$ only needs to be swept once (but in a very specific ordering). For simplicity, we will assume that $\xs \in \Omega^h$. Let $u_{i,j}$ denote the solution at $x_{i,j}$ where
\[
x_{i,j} = (-1+(i-1)h,-1+(j-1)h).
\]
Set $(i^*,j^*)$ such that $x_{i^*,j^*} \leq \xs < x_{i^*+1,j^*+1}$ (here the inequalities are interpreted component-wise). Set as well $u_{i^*,j^*} = g(\xs)$. The domain is then divided into four quadrants and each is swept in the following way:
\begin{itemize}
	\item $i = i^*,\ldots,N$, $j = j^*,\ldots, 1$ (sweeping bottom right square)
	\item $i = i^*,\ldots,N$, $j = j^*,\ldots, N$ (sweeping top right square) 
	\item $i = i^*+1,\ldots,1$, $j = j^*,\ldots, 1$ (sweeping bottom left square)
	\item $i = i^*+1,\ldots,1$, $j = j^*,\ldots, N$ (sweeping top left square)
\end{itemize}

\begin{remark}
Alternatively, the solver can be seen as a direct consequence of the solution formula \eqref{eq:explicit_visibility}.
\end{remark}

\section{Convergence of Numerical Solutions}\label{sec:convergence}

In this section we recall the notion of degenerate elliptic schemes and show that the solutions of the proposed numerical scheme converges to the solutions of \eqref{PDE:visibility} as the discretization parameter tends to zero. The standard framework used to establish convergence is that of Barles and Souganidis \cite{BSnum}, which we state below. In particular, it guarantees that the solutions of any monotone, consistent, and stable scheme converge to the unique viscosity solution of the PDE.

\subsection{Degenerate elliptic schemes}

Consider the Dirichlet problem for the degenerate elliptic PDE, $F[u] = 0$, and recall its corresponding finite difference formulation:
\[
\begin{cases}
F_h(x,u(x), u(x)-u(\cdot)) = 0,	& x \in \Omega^h,\\
u(x)-g(x) = 0,				& x \in \boundary^h,
\end{cases}
\]
where $h$ is the discretization parameter.

\begin{definition}
$F_h[u]$ is a degenerate elliptic scheme if it is non-decreasing in each of its arguments.
\end{definition} 

\begin{remark}
Although the convergence theory in \cite{BSnum} is originally stated in terms of monotone approximation schemes (schemes with non-negative coefficients), ellipticity is an equivalent formulation for finite difference operators \cite{ObermanSINUM}.
\end{remark}

\begin{definition}
The finite difference operator $F_h[u]$ is consistent with $F[u]=0$ if for any smooth function $\phi$ and $x\in\Omega$,
\begin{align*}
\lim_{h \to 0,y \to x, \xi \to 0} F_h(y,\phi(y)+\xi,\phi(y)-\phi(\cdot)) = F(x,\phi(x),\nabla \phi(x)).
\end{align*}
\end{definition}

\begin{definition}
The finite difference operator $F_h[u]$ is stable if there exists $M > 0$ independent of $h$ such that if $F_h[u] = 0$ then $\norm{u}_\infty \leq M$.
\end{definition}
\begin{remark}[Interpolating to the entire domain]
The convergence theory assumes that the approximation scheme and the grid function are defined on all of $\Omega$. Although the finite difference operator acts only on functions defined on $\Omega^h$, we can extend such functions to $\Omega^h$ via piecewise linear interpolation. In particular, performing piecewise linear interpolation maintains the ellipticity of the scheme, as well as all other relevant properties. Therefore, we can safely interchange $\Omega$ and $\Omega^h$ in the discussion of convergence without any loss of generality
\end{remark}

\subsection{Convergence of numerical approximations}

Next we will state the theorem for convergence of approximation schemes, tailored to elliptic finite difference schemes, and demonstrate that the proposed scheme fits in the desired framework. In particular, we will show that the schemes are elliptic, consistent, and have stable solutions.

\begin{proposition}[Convergence of approximation schemes \cite{BSnum}]\label{prop:convergence}
Let $u$ denote the unique viscosity of the degenerate elliptic PDE $F[u]=0$ with Dirichlet boundary conditions for which there exists a strong comparison principle. For each $h$, let $u^h$ denote the solutions of $F^h[u]=0$, where the finite difference scheme $F_h[u]$ is a consistent, stable and elliptic scheme. Then $u^h \to u$ locally uniformly on $\Omega$ as $h \to 0$.
\end{proposition}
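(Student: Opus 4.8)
The plan is to run the classical half-relaxed limit argument of Barles and Perthame, which is the standard engine behind the Barles--Souganidis theorem. First I would introduce the upper and lower relaxed limits
\[
\overline{u}(x) = \limsup_{h \to 0, \, y \to x} u^h(y), \qquad \underline{u}(x) = \liminf_{h \to 0, \, y \to x} u^h(y).
\]
Stability of the scheme guarantees that the grid functions $u^h$ are uniformly bounded independently of $h$, so both limits are finite on $\closedOmega$. By construction $\overline{u}$ is upper semicontinuous, $\underline{u}$ is lower semicontinuous, and $\underline{u} \le \overline{u}$ everywhere. The goal is to squeeze these two envelopes together.

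The core of the proof is to show that $\overline{u}$ is a viscosity subsolution and $\underline{u}$ a viscosity supersolution of $F[u]=0$; I would carry out the subsolution case, the supersolution case being symmetric. Let $\phi \in C^1$ and suppose $\overline{u} - \phi$ has a strict global maximum at $x_0 \in \Omega$, normalized so that $\phi(x_0) = \overline{u}(x_0)$. From the definition of $\overline{u}$ together with stability one extracts a sequence $h \to 0$ and grid points $x_h \to x_0$ at which $u^h - \phi$ attains its maximum over $\Omega^h$ and $u^h(x_h) \to \overline{u}(x_0)$. Setting $\xi_h = u^h(x_h) - \phi(x_h) \to 0$, maximality at $x_h$ gives $u^h(x_h) - u^h(\cdot) \ge \phi(x_h) - \phi(\cdot)$ at the neighboring grid points; ellipticity (monotonicity of $F_h$ in each difference argument) applied to the scheme relation $F_h[u^h](x_h) = 0$ then yields $F_h(x_h, \phi(x_h)+\xi_h, \phi(x_h)-\phi(\cdot)) \le 0$. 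Passing to the limit and invoking consistency produces $F(x_0, \overline{u}(x_0), \grad \phi(x_0)) \le 0$, which is precisely the subsolution inequality.

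With $\overline{u}$ a subsolution and $\underline{u}$ a supersolution, both up to the boundary, I would invoke the strong comparison principle to conclude $\overline{u} \le \underline{u}$ on $\closedOmega$. Combined with the reverse inequality $\underline{u} \le \overline{u}$, this forces $\overline{u} = \underline{u}$; the common value is then a continuous function that is simultaneously a sub- and supersolution, hence equals the unique viscosity solution $u$. Finally, the equality $\overline{u} = \underline{u} = u$ of the two relaxed limits is exactly equivalent to locally uniform convergence $u^h \to u$ on $\Omega$, which completes the argument.

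I expect the main obstacle to lie in the extraction and monotonicity step of the second paragraph: carefully producing the maximizing grid points $x_h$ with the correct convergence of the values $u^h(x_h)$, and then exploiting ellipticity in each difference argument to transfer the scheme from $u^h$ to the perturbed test function $\phi + \xi_h$. The boundary treatment is the other delicate point, since the conclusion relies on the Dirichlet condition holding in the viscosity sense and on comparison extending all the way up to $\boundary$; this is precisely why the hypothesis of a \emph{strong} comparison principle, rather than a purely interior one, is built into the statement.
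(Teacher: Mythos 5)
Your proposal is correct and follows exactly the classical half-relaxed-limit (Barles--Perthame) argument: the paper itself states this proposition without proof, citing \cite{BSnum}, and the argument in that reference is the one you reproduce, down to the use of stability for finiteness of $\overline{u},\underline{u}$, ellipticity plus consistency for the sub/supersolution properties, and strong comparison to force $\overline{u}\leq\underline{u}$. Indeed, the paper's own convergence theorem later exploits precisely this structure, observing that the strong comparison hypothesis enters only in that final squeezing step and can be replaced by an ad hoc verification of $\overline{u}\leq\underline{u}$.
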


Now, we check that our scheme is consistent, stable and elliptic.

\begin{lemma}[Consistency]\label{lemma:consistency}
The scheme is consistent.
\end{lemma}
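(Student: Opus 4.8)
The plan is to verify the consistency definition directly for the scheme
\[
F^h[u](x) = \min\{u(x)-g(x), u^h_v(x)\},
\]
against the PDE operator $F[u](x) = \min\{u(x)-g(x), (x-\xs)\cdot\grad u(x)\}$. Since $\min$ is continuous, it suffices to show that each of the two arguments is consistent. The zeroth-order term $u(x)-g(x)$ is trivially consistent: replacing $u$ by $\phi(y)+\xi$ and sending $y\to x$, $\xi\to 0$ recovers $\phi(x)-g(x)$ by continuity of $\phi$ and $g$. So the real content is showing that the directional difference quotient $u^h_v(x)$ is consistent with $(x-\xs)\cdot\grad u(x)$.

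First I would fix a smooth test function $\phi$ and a point $x\in\Omega$, and examine
\[
\phi^h_v(x) = \frac{\phi(x)-\mathcal{I}_h\phi(\tilde{x})}{\abs{x-\tilde{x}}},
\]
recalling from the scheme's construction that $\tilde{x}$ is the intersection of the ray through $x$ and $\xs$ with a segment joining two of the eight grid neighbors of $x$, so that $\tilde{x}$ lies on the line through $x$ in the direction $\vs = x-\xs$ and satisfies $\abs{x-\tilde{x}} = \bO(h)$. The key geometric facts I would record are: (i) $\tilde{x} = x - \abs{x-\tilde{x}}\,\frac{x-\xs}{\abs{x-\xs}}$, i.e. $\tilde{x}$ is displaced from $x$ in the direction $-\vs/\abs{\vs}$; and (ii) the interpolation error of the piecewise linear Lagrange interpolant on a smooth function is $\mathcal{I}_h\phi(\tilde{x}) = \phi(\tilde{x}) + \bO(h^2)$. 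Substituting (ii) and a first-order Taylor expansion of $\phi$ at $x$ along the ray gives
\[
\phi^h_v(x) = \frac{\abs{x-\tilde{x}}\,\grad\phi(x)\cdot\frac{x-\xs}{\abs{x-\xs}} + \bO(h^2)}{\abs{x-\tilde{x}}} = \frac{x-\xs}{\abs{x-\xs}}\cdot\grad\phi(x) + \bO(h).
\]
This recovers the direction $(x-\xs)$ only up to the normalizing factor $\abs{x-\xs}$, so I would confirm that the intended limit operator is the \emph{normalized} directional derivative (as written in the upwind formula $u^h_v(x)=(u(x)-\mathcal{I}_hu(\tilde x))/\abs{x-\tilde x}$), and note that multiplying a degenerate elliptic operator by the positive scalar $\abs{x-\xs}$ leaves the zero level set — and hence the viscosity solution — unchanged away from $\xs$. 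Finally, to match the consistency definition verbatim, I would carry the $+\xi$ perturbation and the $y\to x$ limit through the same computation, which changes nothing since the constant $\xi$ cancels in the difference $\phi(x)-\mathcal{I}_h\phi(\tilde x)$ and the evaluation point dependence is continuous.

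The main obstacle I anticipate is the behavior of $\tilde{x}$ and the difference quotient \emph{near the viewpoint} $\xs$. When $x$ is one of the four grid points enclosing $\xs$ the scheme sets $\tilde{x}=\xs$, so $\abs{x-\tilde x}$ need not be $\bO(h)$ in a uniform sense relative to $\abs{x-\xs}$, and the factor $\abs{x-\xs}$ degenerates to zero; one must check that consistency still holds in the limiting sense of the definition (where $y\to x$ may be taken with $x$ bounded away from $\xs$, the solution being discontinuous or the operator degenerate exactly at $\xs$). A secondary technical point is ensuring the geometric identity $\abs{x-\tilde x}=\bO(h)$ and the direction alignment hold uniformly as $x$ ranges over $\Omega^h$, which follows from the fixed stencil of eight neighbors but should be stated carefully. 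Apart from this near-$\xs$ degeneracy, the argument is a routine Taylor expansion combined with the second-order accuracy of piecewise linear interpolation.
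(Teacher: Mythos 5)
Your proposal is correct and follows the same basic route as the paper --- the paper's proof is a two-line assertion that $(x-\xs)\cdot\grad u(x) = u^h_v(x) + \bO(h)$ ``follows immediately from a Taylor expansion argument and the definition of $u^h_v$,'' which is exactly the computation you carry out in detail. However, your treatment is more careful on a point the paper glosses over, and your care is warranted: since $u^h_v$ is normalized by $\abs{x-\tilde{x}}$ and $\tilde{x}$ lies on the ray from $x$ toward $\xs$, the difference quotient converges to the \emph{unit} directional derivative $\grad\phi(x)\cdot\frac{x-\xs}{\abs{x-\xs}}$, so the paper's displayed identity is off by the positive factor $\abs{x-\xs}$ as literally stated. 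Your resolution --- observing that scaling the gradient term of the $\min$ by a continuous positive factor changes neither the sign of the operator nor its viscosity sub/supersolutions away from $\xs$, so the scheme is consistent with an equivalent form of \eqref{PDE:visibility} --- is the right way to close this gap, and it matters because the Barles--Souganidis argument only ever uses the sign of the limiting operator. Your concern about the degeneracy near $\xs$ also resolves cleanly: in the consistency definition the point $x\in\Omega$ is fixed while $h\to 0$, so for $x\neq\xs$ the factor $\abs{x-\xs}$ is bounded away from zero and $\abs{x-\tilde{x}}=\bO(h)$ uniformly, while the point $\xs$ itself is handled by the condition $u(\xs)=g(\xs)$ rather than by the equation. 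In short, your proof fills in, and quietly repairs, the paper's argument.
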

\begin{proof}
It is sufficient to show that
\[
(x-\xs) \cdot \grad u(x) = u^h_v(x) + \bO(h)
\]
which follows immediately from a Taylor expansion argument and the definition of $u^h_v$.
\end{proof}

\begin{lemma}[Stability]\label{lemma:stability}
Suppose $g$ is bounded. Then the scheme is stable.
\end{lemma}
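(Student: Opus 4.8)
The plan is to show that any grid function $u$ satisfying $F^h[u] = 0$ is bounded in $\ell^\infty$ by a constant depending only on $\|g\|_\infty$, and crucially independent of $h$. The natural way to do this is to exploit the explicit update formula $u(x) = \max\{g(x), \mathcal{I}_h u(\tilde{x})\}$ derived in the fast sweeping subsection, which is equivalent to $F^h[u](x) = 0$. Since the piecewise linear interpolant $\mathcal{I}_h$ is a convex combination of nodal values, it satisfies $\min u \le \mathcal{I}_h u(\tilde{x}) \le \max u$, so interpolation never exceeds the range of the grid data.

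First I would establish the lower bound. From the update formula, $u(x) = \max\{g(x), \mathcal{I}_h u(\tilde{x})\} \ge g(x) \ge -\|g\|_\infty$ at every grid point, giving $u(x) \ge -\|g\|_\infty$ immediately and for free. The upper bound is the substantive direction. Here I would argue along the characteristic ordering used by the sweeping solver: each point $x$ traces back along the ray toward $\xs$ through a sequence of interpolation points $\tilde{x}$, terminating at $\xs$ where $u(\xs) = g(\xs)$. Unrolling the recursion $u(x) = \max\{g(x), \mathcal{I}_h u(\tilde{x})\}$ along this ray, and using that each interpolation step is a convex combination of already-computed neighboring values, one sees that $u(x)$ is a maximum of finitely many values of $g$ taken at (interpolated) points along the ray. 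Consequently $u(x) \le \|g\|_\infty$. Thus $\|u\|_\infty \le \|g\|_\infty =: M$, which is manifestly independent of $h$.

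The main obstacle is making the interpolation-along-rays argument fully rigorous: one must verify that the backward characteristic chain from any grid point reaches $\xs$ in finitely many steps and that at each step the interpolation point $\tilde{x}$ lies strictly closer to $\xs$ (so the induction is well-founded), and that the interpolated value is genuinely a convex combination of values at points already processed in the sweep ordering. This is precisely what the quadrant-by-quadrant sweep guarantees, since all characteristics flow away from $\xs$ along straight lines. A clean alternative that sidesteps the recursion entirely is to invoke Remark following the solver: the solver reproduces the explicit formula \eqref{eq:explicit_visibility}, so the computed solution is $u(x) = \max\{g(y) \mid y = \xs + t(x-\xs) \in \Omega,\ t \in [0,1]\}$ up to interpolation error, from which $-\|g\|_\infty \le u(x) \le \|g\|_\infty$ follows at once. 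I would present the monotone-interpolation bound as the primary argument and note that stability therefore holds with $M = \|g\|_\infty$.
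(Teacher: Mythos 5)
Your proof is correct and takes essentially the same approach as the paper: the paper's one-line proof also reads the bound off the update formula $u^h(x) = \max\{g(x),\mathcal{I}_h u^h(\tilde{x})\}$, relying implicitly on the convex-combination property of $\mathcal{I}_h$ and the well-founded sweep ordering from $\xs$ that you make explicit. The only cosmetic difference is that your lower bound is $-\norm{g}_\infty$ where the paper states $g(\xs)$; either one gives stability with $M = \norm{g}_\infty$, independent of $h$.
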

\begin{proof}
Since the solution $u^h$ satisfies
\[
u^h(x) = \max\{g(x),\mathcal{I}_h u^h(\tilde{x})\}, \quad x \in \Omega^h,
\]
it follows that $g(\xs) \leq u^h(x) \leq \norm{g}$.
\end{proof}

\begin{lemma}[Ellipticity]\label{lemma:ellipticity}
The scheme is elliptic.
\end{lemma}
\begin{proof}
The term $u(x)-g(x)$ is trivially elliptic. Since $\tilde{x}$ belongs to the line segment of two grid points, $\mathcal{I}_h u(\tilde{x})$ is a convex combination of neighboring grid values and therefore $u^h_v$ is elliptic. Hence $F^h$, being the minimum of two elliptic schemes, is also elliptic.
\end{proof}

\begin{theorem}
Suppose $g \in C(\Omega)$. Then the solutions of \eqref{scheme:visibility} converge locally uniformly on $\Omega$ as $h\to 0$ to the unique viscosity solution of \eqref{PDE:visibility} along with $u(\xs) = g(\xs)$.
\end{theorem}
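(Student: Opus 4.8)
{\bf Proof proposal.}
The plan is to verify that the scheme \eqref{scheme:visibility} satisfies the three hypotheses of the Barles--Souganidis framework (Proposition \ref{prop:convergence}) and that the limiting PDE \eqref{PDE:visibility} admits a strong comparison principle in the appropriate class, so that convergence follows immediately. The three scheme properties are already established: consistency is Lemma \ref{lemma:consistency}, stability is Lemma \ref{lemma:stability} (using that $g \in C(\Omega)$ is bounded on the closed hypercube), and ellipticity is Lemma \ref{lemma:ellipticity}. So the first task is simply to assemble these and invoke Proposition \ref{prop:convergence}. The existence and uniqueness of the viscosity solution of \eqref{PDE:visibility} with $u(\xs) = g(\xs)$ is provided by Proposition \ref{prop:UpperEnv}, which identifies it with $SS^+(g)$, together with its continuity (Proposition \ref{prop:continuity}).

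The substantive issue is the comparison principle that the framework requires. As the authors note earlier, a na\"ive strong comparison principle fails for \eqref{PDE:visibility} because one may add a large constant to any subsolution. The resolution is to work in the restricted class of subsolutions $u$ satisfying the pointwise constraint $u(\xs) \leq g(\xs)$; within this class Proposition \ref{prop:comparison} gives $u \leq v$ for every supersolution $v$. First I would argue that the discrete scheme respects this constraint: the update formula $u(x) = \max\{g(x), \mathcal{I}_h u(\tilde x)\}$ together with the initialization $u_{i^*,j^*} = g(\xs)$ forces the numerical solution to satisfy $u^h(\xs) = g(\xs)$ (and more generally $u^h \geq g(\xs)$ everywhere, as in the stability proof). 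Hence the half-relaxed limits $\overline u = \limsup{}^* u^h$ and $\underline u = \liminf{}_* u^h$ inherit $\overline u(\xs) \le g(\xs)$ and $\underline u(\xs) \ge g(\xs)$, placing them in the class where comparison is available.

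The standard half-relaxed limit argument then runs as follows. By consistency and ellipticity, the upper half-relaxed limit $\overline u$ is a viscosity subsolution of \eqref{PDE:visibility} and the lower half-relaxed limit $\underline u$ is a viscosity supersolution, while stability guarantees both are finite. Since $\overline u$ is a subsolution with $\overline u(\xs) \le g(\xs)$ and $\underline u$ is a supersolution, Proposition \ref{prop:comparison} yields $\overline u \le \underline u$ on $\Omega$; the reverse inequality $\underline u \le \overline u$ holds trivially from the definitions of the half-relaxed limits. Therefore $\overline u = \underline u =: u$ is continuous, is the unique viscosity solution of \eqref{PDE:visibility} with $u(\xs) = g(\xs)$, and the equality of the two half-relaxed limits upgrades pointwise to locally uniform convergence $u^h \to u$ on $\Omega$.

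The main obstacle, and the step requiring the most care, is the boundary behavior. Proposition \ref{prop:comparison} as stated compares subsolutions and supersolutions on $\Omega$ without invoking the Dirichlet data, and the half-relaxed limit method ordinarily needs the boundary condition to hold in the viscosity sense at $\partial\Omega$; here the geometry is favorable because all characteristics flow outward from $\xs$, so information propagates from the interior toward $\boundary$ rather than inward, and the value at $\xs$ (not the boundary) is what pins down the solution. I would therefore need to confirm that the point constraint $u(\xs) = g(\xs)$, rather than a boundary constraint, is exactly what the comparison in Proposition \ref{prop:comparison} exploits, and that the discrete solution honors this constraint at every scale $h$ via the sweeping initialization. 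Once this compatibility between the discrete constraint and the hypothesis of Proposition \ref{prop:comparison} is nailed down, the convergence conclusion follows directly.
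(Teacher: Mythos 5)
Your overall strategy is the same as the paper's: assemble Lemmas \ref{lemma:consistency}, \ref{lemma:stability} and \ref{lemma:ellipticity}, note that a strong comparison principle fails, and replace it inside the Barles--Souganidis argument by the restricted comparison of Proposition \ref{prop:comparison} applied to the half-relaxed limits $\overline{u}$ and $\underline{u}$, which requires verifying $\overline{u}(\xs) \leq g(\xs)$. However, there is a genuine gap at exactly this verification. You claim that because the sweeping initialization forces $u^h(\xs) = g(\xs)$ for every $h$, the upper half-relaxed limit ``inherits'' $\overline{u}(\xs) \leq g(\xs)$. This inference is invalid: $\overline{u}(\xs) = \limsup_{h\to 0,\, y\to \xs} u^h(y)$ is computed from the values of $u^h$ at \emph{nearby} points $y$, not only at $\xs$ itself, and a pointwise constraint at a single grid point never passes to a half-relaxed limit. (As an abstract illustration, grid functions equal to $g(\xs)$ at the initialization node but equal to a huge constant $M$ at all neighboring nodes would satisfy your constraint and yet have $\overline{u}(\xs) = M$.) Your parenthetical observation that $u^h \geq g(\xs)$ everywhere only gives the lower bound $\underline{u}(\xs) \geq g(\xs)$, which is not the inequality Proposition \ref{prop:comparison} needs; the proposition requires the \emph{subsolution} $\overline{u}$ to satisfy $\overline{u}(\xs) \leq g(\xs)$.

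What is actually needed --- and what the paper proves --- is an upper bound on $u^h(y)$ for all $y$ near $\xs$, uniformly in small $h$, and this must come from the structure of the scheme rather than from the value at $\xs$: by the update formula $u^h(y) = \max\{g(y), \mathcal{I}_h u^h(\tilde{y})\}$, the value $u^h(y)$ is a maximum of interpolated values of $g$ at points lying along the discrete ray from $\xs$ to $y$, all of which lie in $B(\xs,2\epsilon)$ whenever $y \in B(\xs,\epsilon)$ and $h$ is small compared to $\epsilon$. Hence $\sup\{u^h(y) : y \in B(\xs,\epsilon),\ 0 < h < \epsilon\} \leq \sup_{B(\xs,2\epsilon)} g$, and letting $\epsilon \to 0$ and using the continuity of $g$ yields $\overline{u}(\xs) \leq g(\xs)$. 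With this step inserted your argument closes and coincides with the paper's proof; your final paragraph's concern about the boundary is resolved exactly as you suspect, since only the interior comparison of Proposition \ref{prop:comparison} together with the point constraint at $\xs$ is used.
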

\begin{proof}
By Lemmas \ref{lemma:consistency}, \ref{lemma:stability} and \ref{lemma:ellipticity}, we see that $F^h$ \eqref{scheme:visibility} is consistent, stable and elliptic.

In order to apply Proposition \ref{prop:convergence}, our PDE must satisfy a strong comparison principle which is not the case here. However, inspecting the proof of Proposition \ref{prop:convergence} in \cite{BSnum}, one notices that it is enough to show that $\overline{u} \leq \underline{u}$ in $\Omega$, where
\[
\overline{u}(x) = \limsup_{h\to 0,y\to x} u^h(x) \text{ and } \underline{u}(x) = \liminf_{h\to 0,y\to x} u^h(x).
\]
This will follow from Proposition \ref{prop:comparison} by proving that $\overline{u}(\xs) \leq g(\xs)$. Indeed, due to the continuity of $g$,
\begin{align*}
\overline{u}(\xs)	& = \limsup_{h\to 0,y\to \xs} u^h(\xs)\\
					& = \lim_{\epsilon\to 0} \sup\{u^h(y): y \in B(\xs,\epsilon)\setminus\{\xs\}, 0 < h < \epsilon\}\\	
					& \leq \lim_{\epsilon\to 0} \sup_{B(\xs,\epsilon)} g(x)\\
					& = g(\xs).
\end{align*}


\end{proof}

\section{Numerical results}\label{sec:numerics}

In this section we present the numerical results. We focus on solving equation \eqref{PDE:visibility} as it is the one we are mainly interested since the sublevel sets of its solution provide us with the visibility set. We present results both in two and three dimensions.

\begin{example}\label{ex_convergence}
We start with a simple example to show a numerical convergence test. We take $\xs=(-1,-1)$ as the viewpoint and consider as the obstacle function the cone $g(x) = -\sqrt{x_1^2+x_2^2}$ and therefore each level-set of $g$ corresponds to a circle with center at the origin and a different radius. The exact solution was obtained using \eqref{eq:explicit_visibility}. The difference between the numerical solution and the exact solution in the $l_\infty$ norm is presented in Table \ref{table:ex_convergence}, where we also confirm the expected first order convergence. Moreover, in Figure \ref{fig:ex_convergence}, we plot the level sets of both $g$ and the numerical solution, as well as their respective surface plots. As we can see, the visibility set is computed for each level set of $g$.
\end{example}

\begin{figure}[htp]
\centering
\subfigure{\includegraphics[width=\widththreefigures]{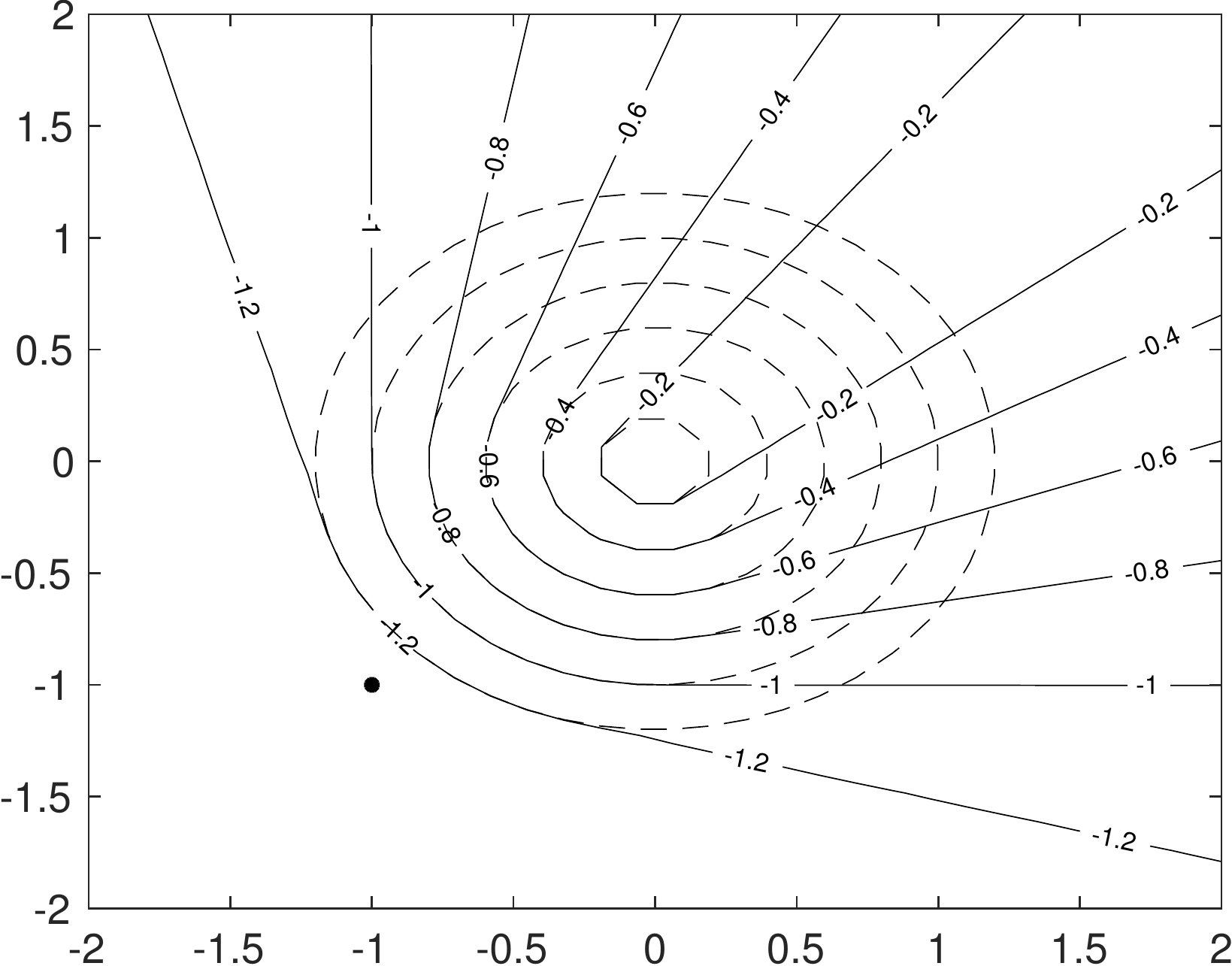}}
\subfigure{\includegraphics[width=\widththreefigures]{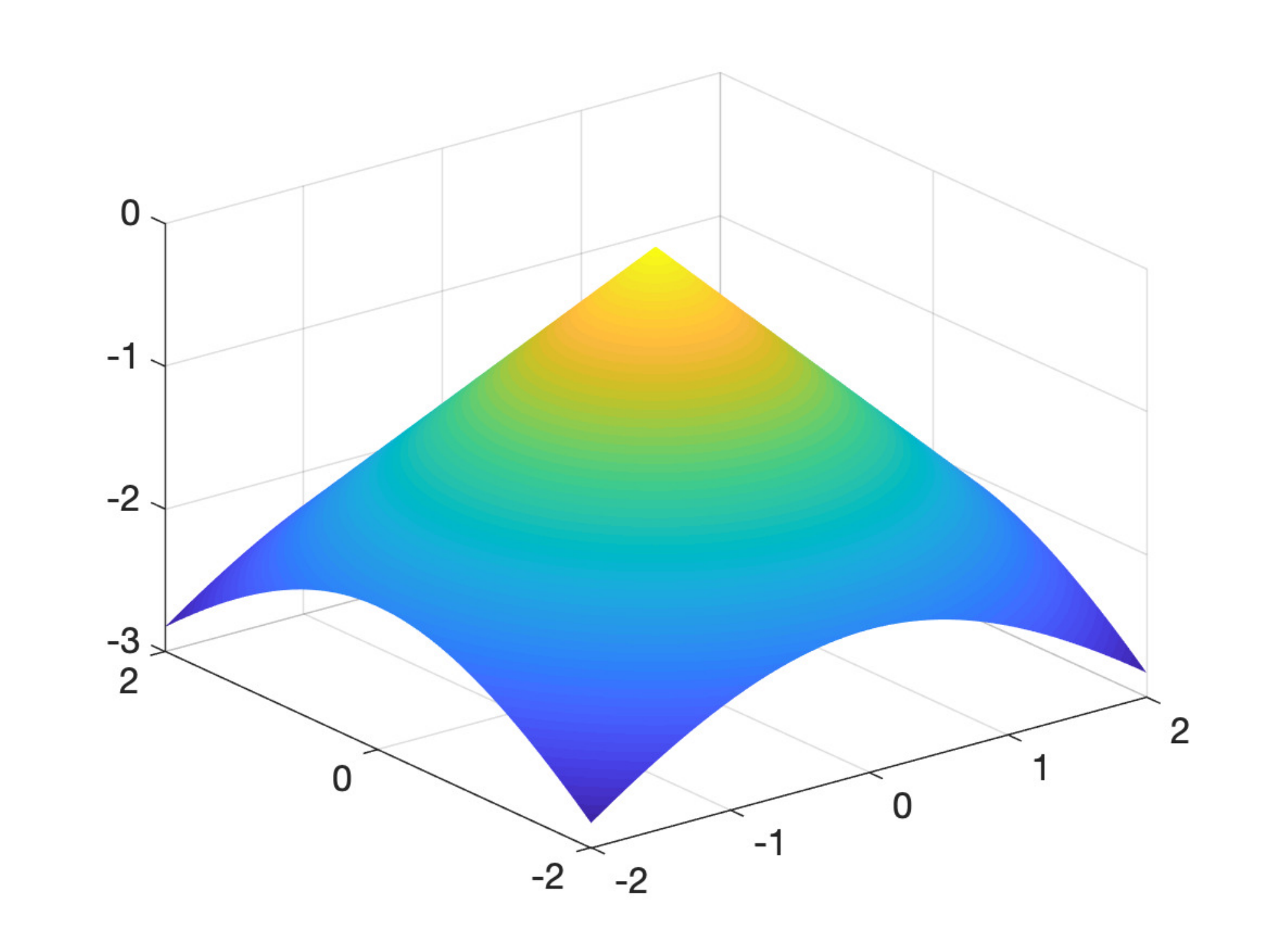}}
\subfigure{\includegraphics[width=\widththreefigures ]{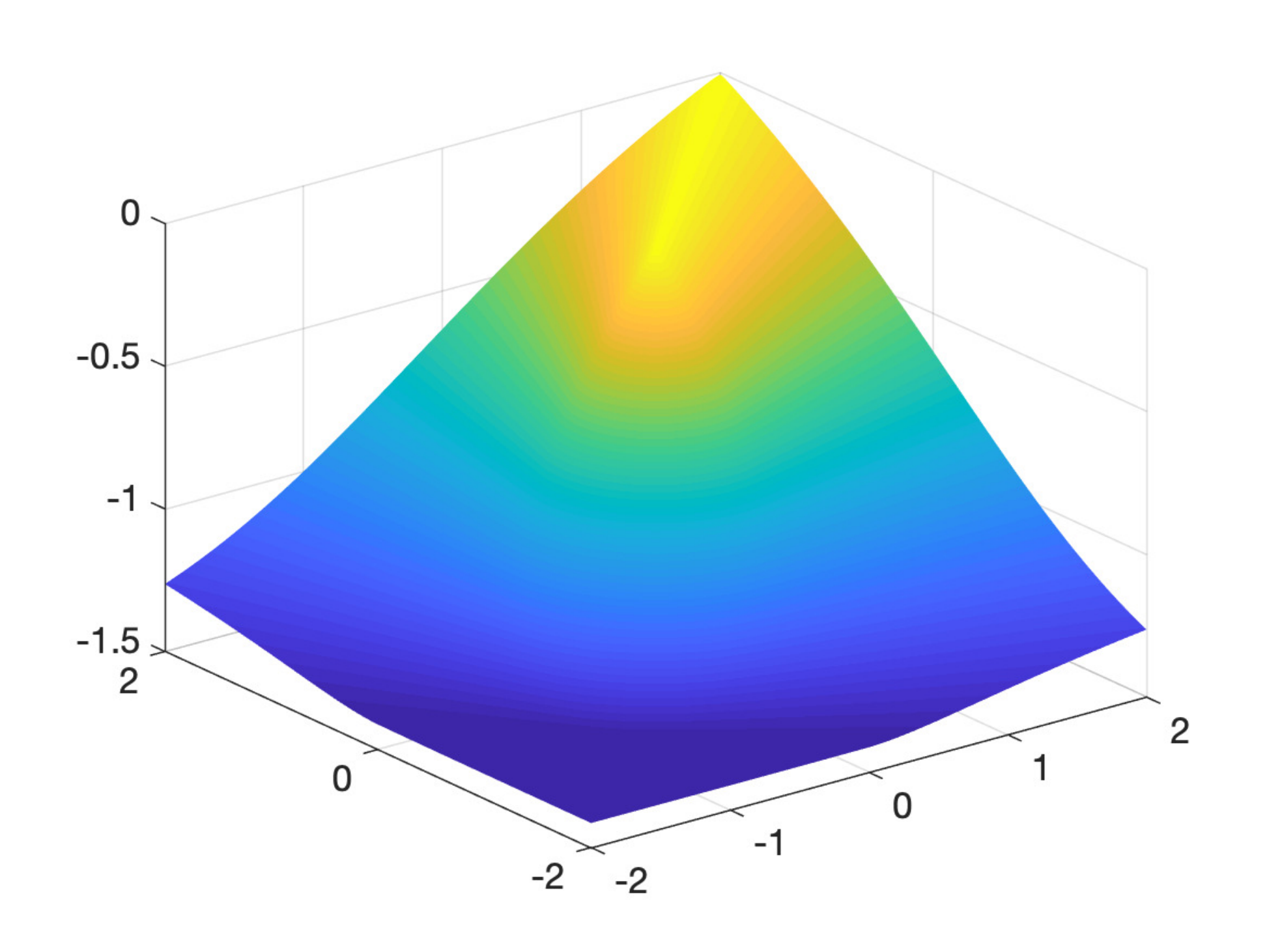}}
\caption{Level sets of the obstacle $g$ and the solution $u$ of \eqref{PDE:visibility} (left) and the respective surface plots (center and right).}
\label{fig:ex_convergence}
\end{figure}

\begin{table}[h]
\centering
\begin{tabular}{cccc}
N & $h$ & Error & Order\\
\hline
32 & \num{1.29e-01} & \num{9.12e-02} & - \\
64 & \num{6.35e-02} & \num{4.49e-02} & 1.02 \\
128 & \num{3.15e-02} & \num{2.23e-02} & 1.01 \\
256 & \num{1.57e-02} & \num{1.11e-02} & 1.01 \\
512 & \num{7.83e-03} & \num{5.54e-03} & 1.00 \\
1024 & \num{3.91e-03} & \num{2.76e-03} & 1.00 \\
2048 & \num{1.95e-03} & \num{1.38e-03} & 1.00 \\
4096 & \num{9.77e-04} & \num{6.91e-04} & 1.00 \\
\hline
\end{tabular}
\caption{Errors and order of convergence for Example \ref{ex_convergence}.}
\label{table:ex_convergence}
\end{table}


\begin{example}\label{ex2}
In this example we are interested in computing the visibility set where we have four different obstacles: two squares with centers $(-1.5,-0.2)$, $(0,0.3)$ and side lengths $0.5$, $1$ respectively and two circles with origins $(-0.3,1.5)$, $(-0.3,-1.4)$ and radius $0.5$. We achieve this by considering the obstacle function
\[
g(x) = -\min\left\{g_1(x),g_2(x),g_3(x),g_4(x)\right\}
\]
where
\begin{align*}
g_1(x) & = 2\max\{\abs{x_1+1.5},\abs{x_2+0.2}\},\\
g_2(x) & = \max\{\abs{x_1},\abs{x_2-0.3}\},\\
g_3(x) & = \sqrt{(x_1+0.3)^2+(x_2-1.5)^2},\\
g_4(x) & = \sqrt{(x_1+0.3)^2+(x_2+1.4)^2},
\end{align*}
and looking into the $0.5$ level set. We first solve  \eqref{PDE:visibility} with $\xs = (-1.5,-1.4)$ and $\xs = (1.5,-0.3)$. We compute as well the visibility set with respect to $\{\xs_1,\xs_2\} = \{(-1.5,-1.4),(1.5,-0.3)\}$ when a point is consider visible if it is seen by any of the viewpoints and by both viewpoints simultaneous. The former corresponds to the solution of \eqref{PDE:visibility_alo}. All results are displayed in Figure \ref{fig:ex2}.

\end{example}


\begin{figure}[htp]
\centering
\subfigure{\includegraphics[width=\widthtwofigures]{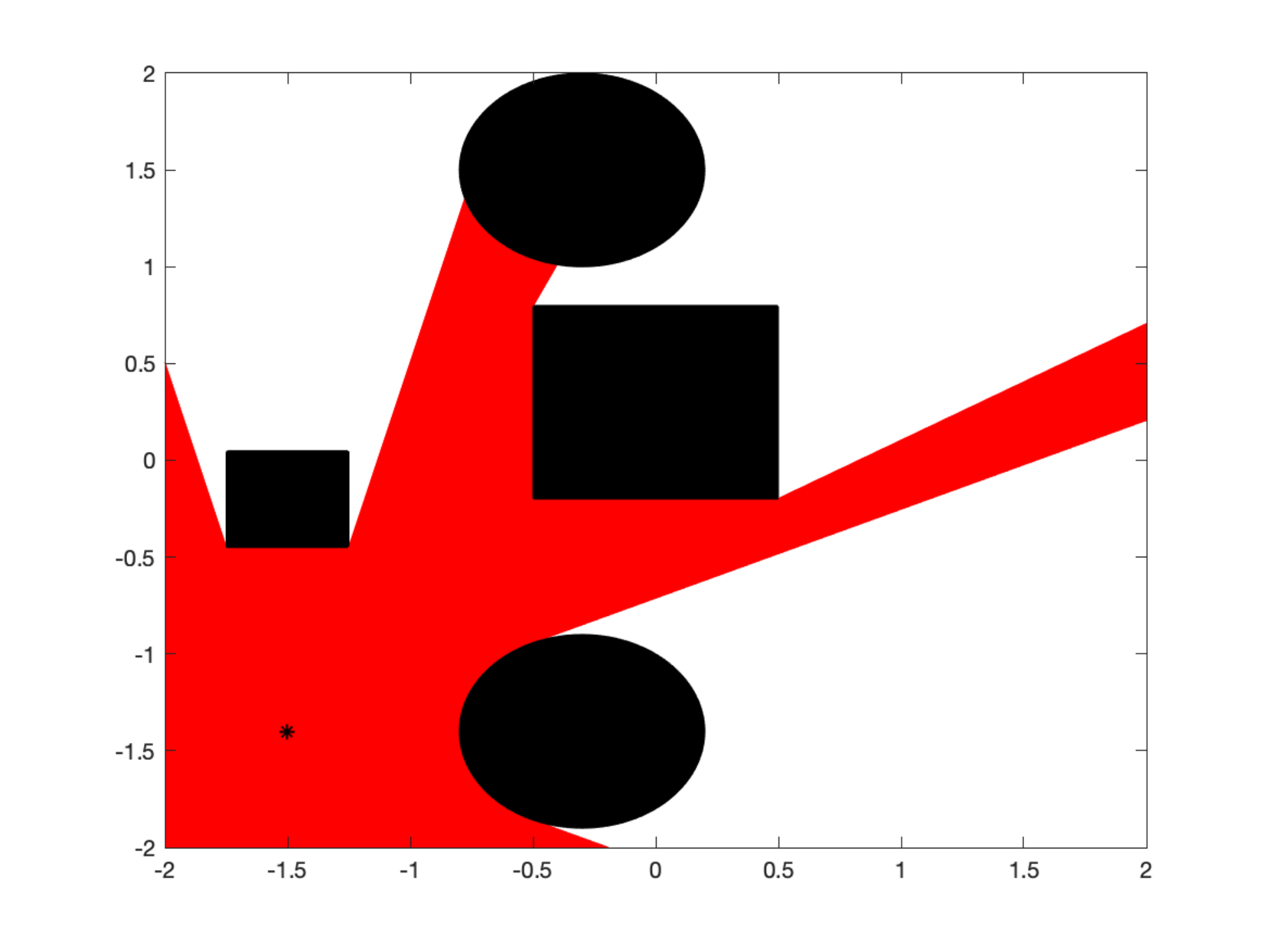}}
\subfigure{\includegraphics[width=\widthtwofigures]{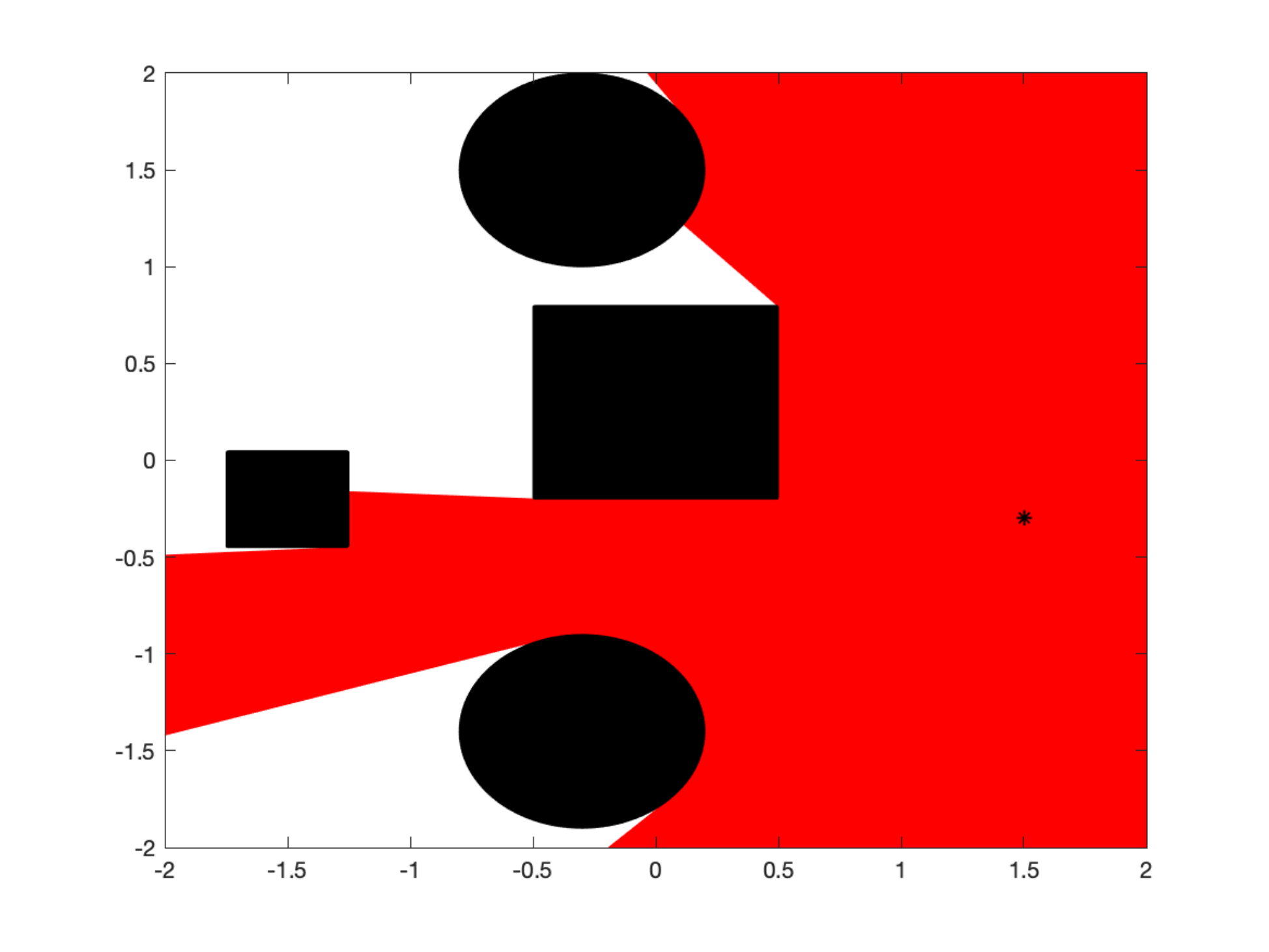}}
\subfigure{\includegraphics[width=\widthtwofigures]{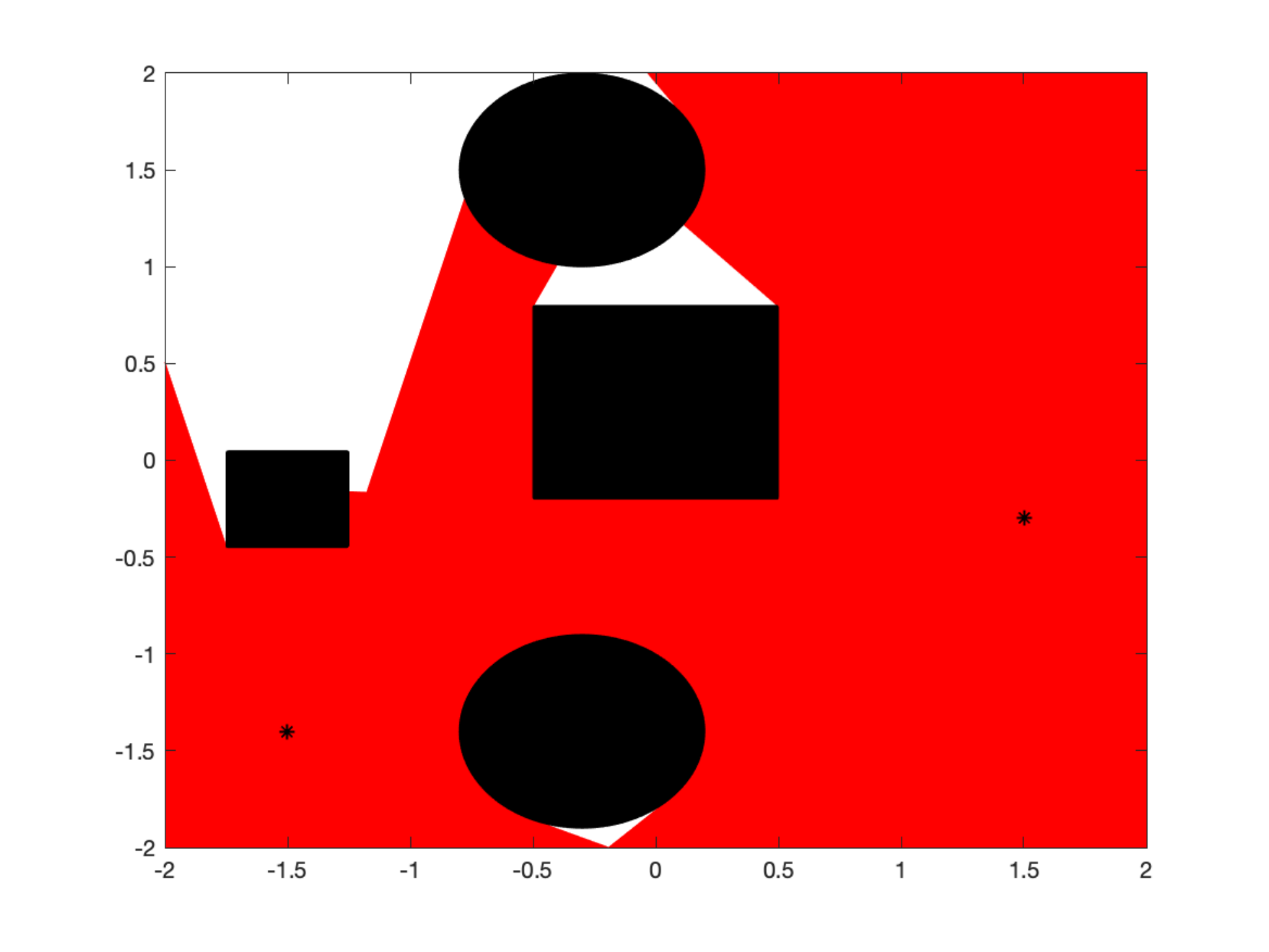}}
\subfigure{\includegraphics[width=\widthtwofigures]{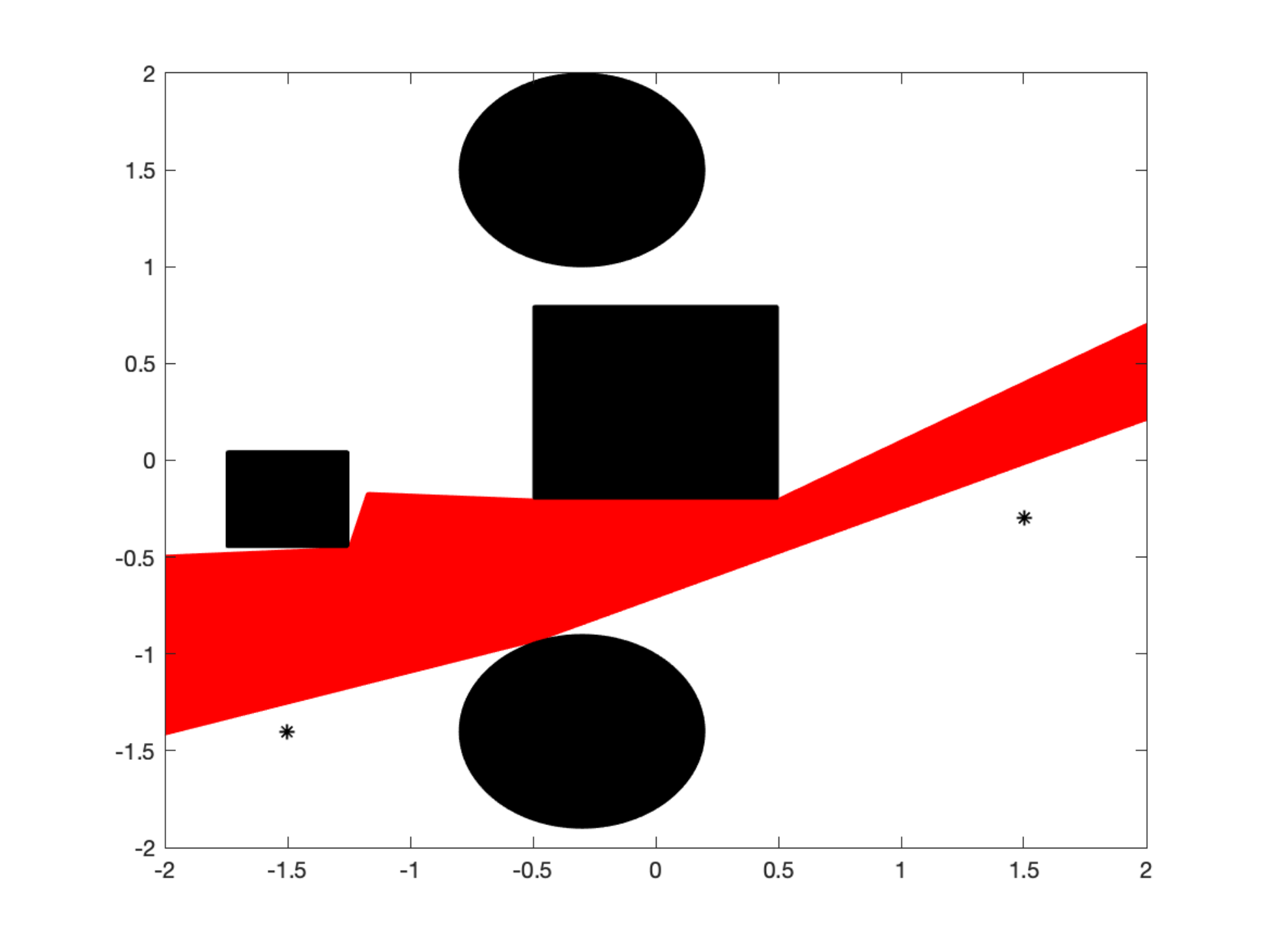}}
\caption{Results for Example \ref{ex2}: solution to \eqref{PDE:visibility} with $\xs = (-1.5,-1.4)$ (top-right); solution to \eqref{PDE:visibility} with $\xs = (1.5,-0.3)$ (top-left); solution to \eqref{PDE:visibility_alo} with $\{\xs_1,\xs_2\} = \{(-1.5,-1.4),(-1.5,-1.4)\}$ (bottom-left), i.e., set of points visible by any of the viewpoints; set of points visible by both viewpoins (bottom-right). All visibility sets are displayed in red, while the obstacles are displayed in black.}
\label{fig:ex2}
\end{figure}

\begin{example}\label{ex3}
We consider a simple three-dimensional example where the obstacle function is given by
\[
g(x) = -\min\{g_1(x),g_2(x)\}
\]
where
\[
g_1(x) = \max(\abs{x_1+2},\abs{x_2},x_3)-1 \quad\text{and}\quad g_2(x) = \max(\abs{x_1-3}),\abs{x_2-4},x_3)-2.
\]
It can be interpreted as the visibility set of a $360^\circ$ camera in the middle of two buildings.
The results are displayed in Figure \ref{fig:ex3}.
\end{example}

\begin{figure}[htp]
\centering
\subfigure{\includegraphics[width=\widthtwofigures]{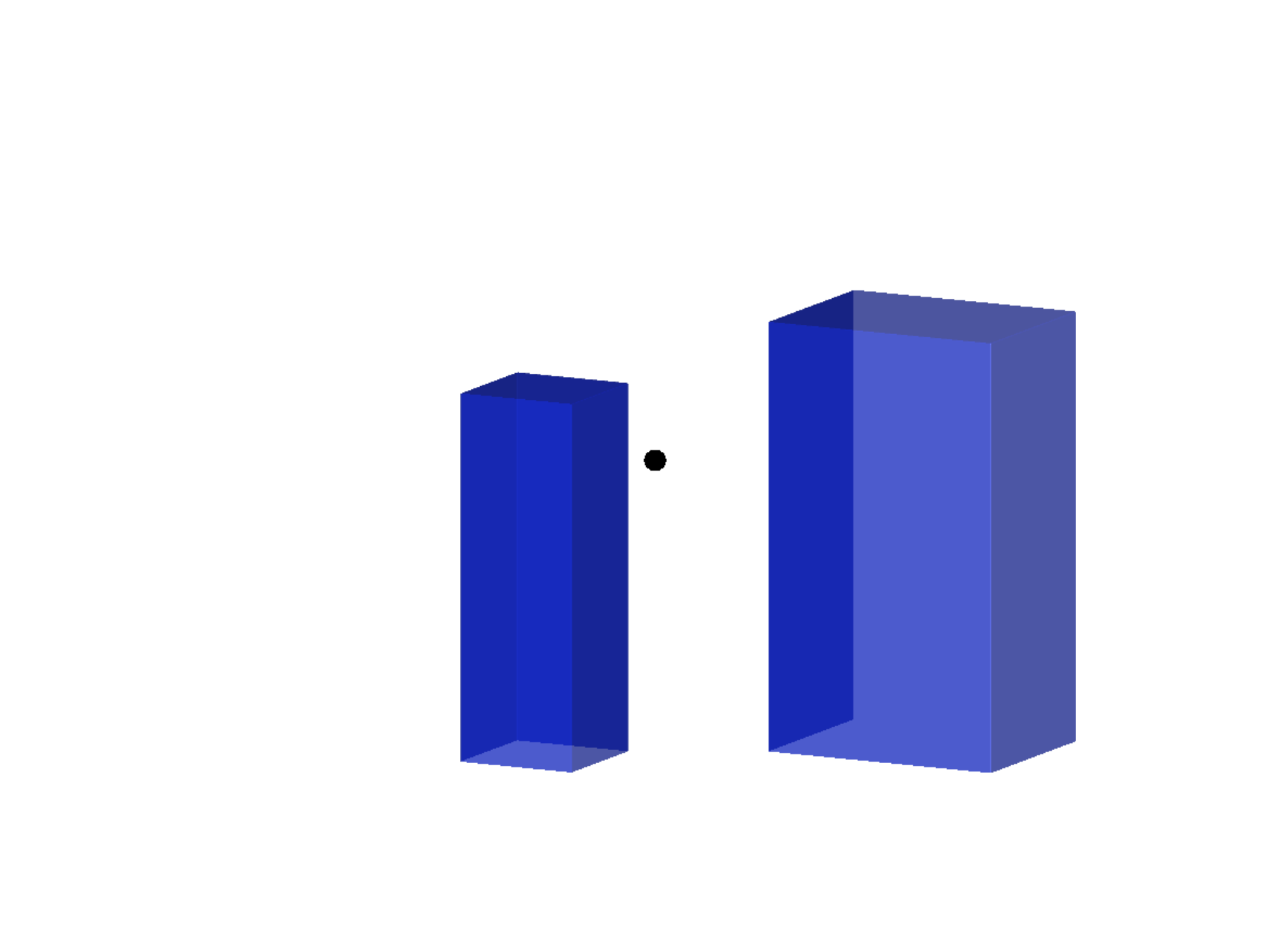}}
\subfigure{\includegraphics[width=\widthtwofigures]{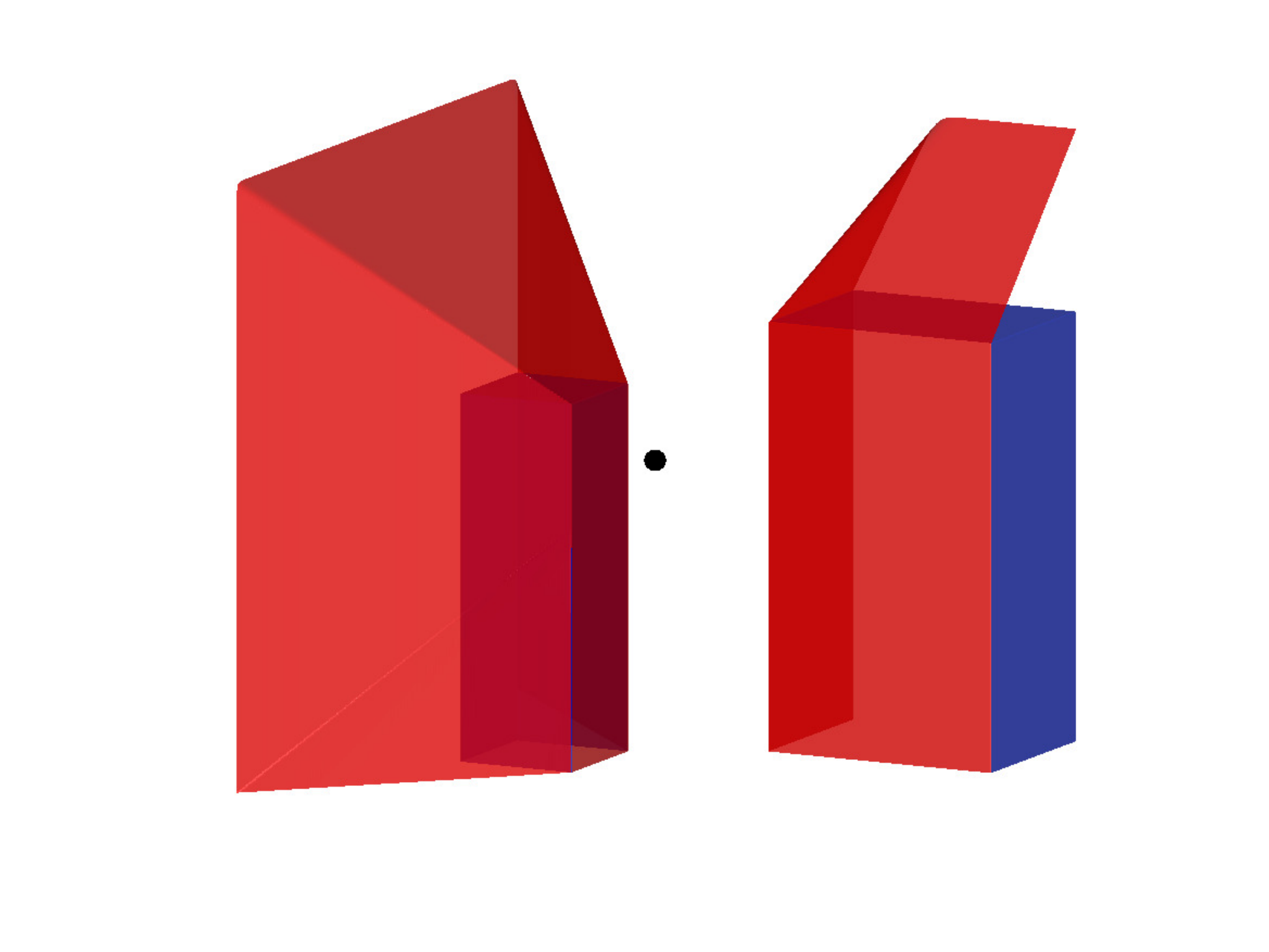}}
\caption{Results for Example \ref{ex3}. The obstacles are displayed in blue, while the contour of the visibility set is displayed in red.}
\label{fig:ex3}
\end{figure}

\begin{example}\label{ex4}
The Stanford 3D Scanning Repository provides a dataset of 35947 distinct points that form what is known as the ``Stanford Bunny''. In this example we considered it as the obstacle by taking the function $g$ to be the signed distance function to the dataset points. The results are displayed in Figure \ref{fig:ex4}.
\end{example}

\begin{figure}[htp]
\centering
\subfigure{\includegraphics[width=\widthtwofigures,trim={2cm 3cm 5cm 1cm},clip]{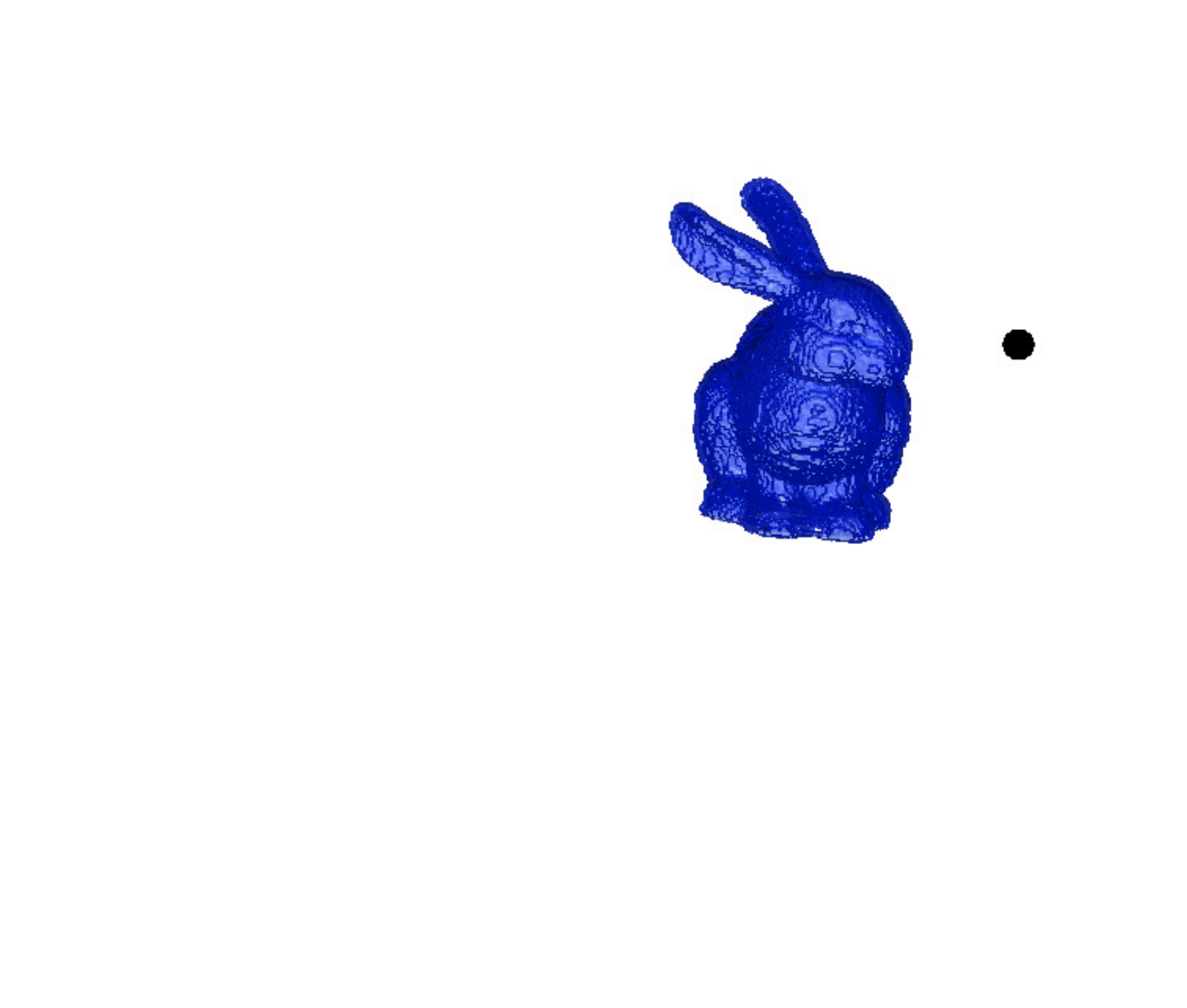}}
\subfigure{\includegraphics[width=\widthtwofigures,trim={2cm 3cm 5cm 1cm},clip]{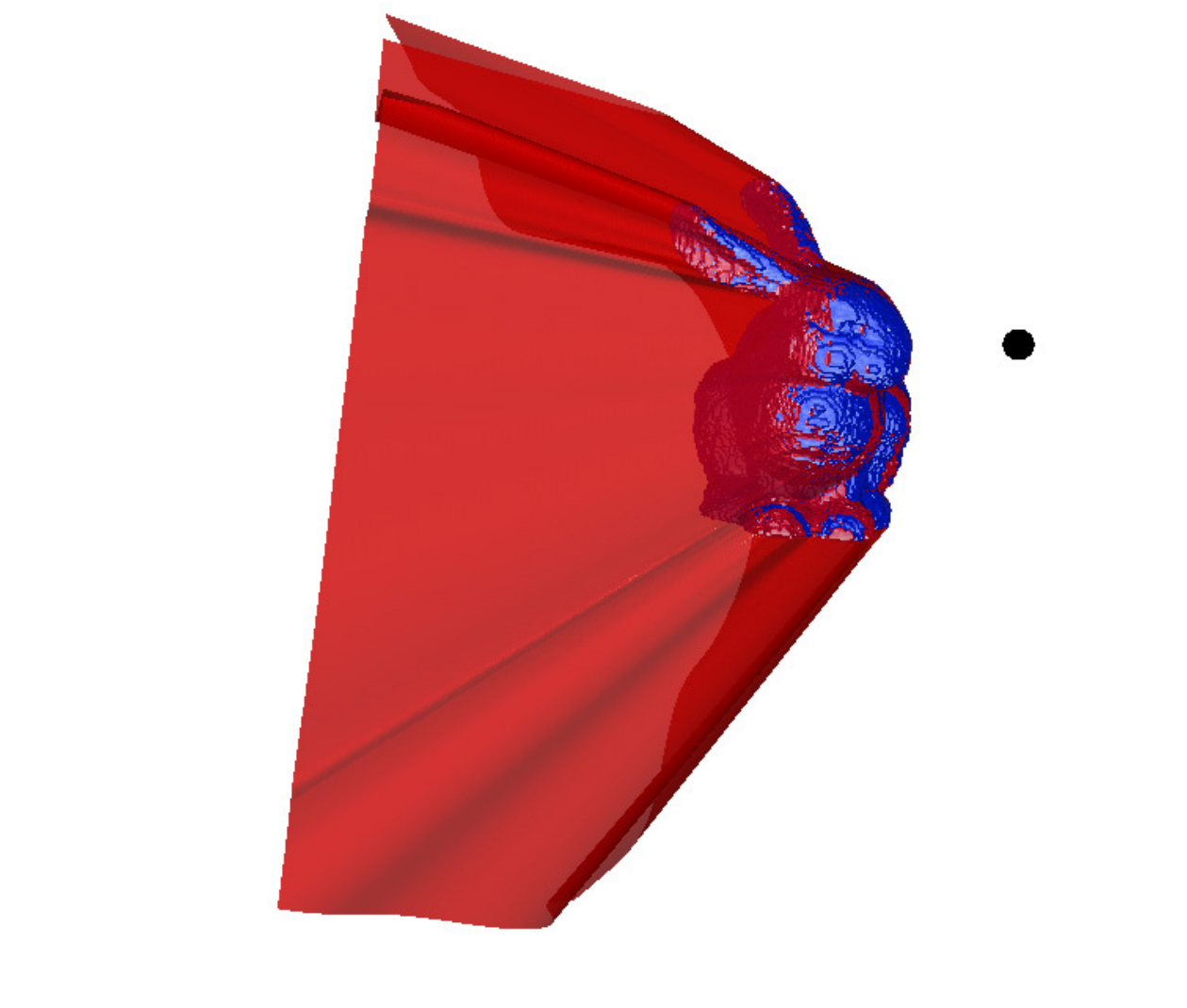}}
\caption{Results for Example \ref{ex4}. The obstacles are displayed in blue, while the contour of the visibility set is displayed in red.}
\label{fig:ex4}
\end{figure}

\section{Conclusions}

In this article, we described a new simpler PDE to compute the visibility set from a given viewpoint given a set of known obstacles. We proposed a finite difference numerical scheme to compute its solution and showed its convergence. We discuss the generalization of the result to multiple viewpoints and present a PDE for the visibility set where a point is visible if it is seen by at least one of many viewpoints. Numerical examples of different visibility sets computed as the solution of the new proposed PDE in both two and three dimensions are presented.

\section*{Acknowledgments}
This material is based upon work supported by the Air Force Office of Scientific Research under award number FA9550-18-1-0167.
The second author thanks the hospitality of the Mathematics and Statistics department of McGill University during its visit where the work for this paper was carried out.

\bibliographystyle{amsalpha}
\bibliography{bibliography}

\newcommand{\etalchar}[1]{$^{#1}$}
\providecommand{\bysame}{\leavevmode\hbox to3em{\hrulefill}\thinspace}
\providecommand{\MR}{\relax\ifhmode\unskip\space\fi MR }
\providecommand{\MRhref}[2]{%
  \href{http://www.ams.org/mathscinet-getitem?mr=#1}{#2}
}
\providecommand{\href}[2]{#2}
\begin{thebibliography}{TCO{\etalchar{+}}04}

\bibitem[AS96a]{AgarwalRayShooting2D}
Pankaj~K. Agarwal and Micha Sharir, \emph{Ray shooting amidst convex polygons
  in 2d}, Journal of Algorithms \textbf{21} (1996), no.~3, 508 -- 519.

\bibitem[AS96b]{AgarwalRayShooting3D}
Pankaj~K. Agarwal and Micha Sharir, \emph{Ray shooting amidst convex polyhedra
  and polyhedral terrains in three dimensions}, SIAM J. Comput. \textbf{25}
  (1996), no.~1, 100--116. \MR{1374052}

\bibitem[BS91]{BSnum}
Guy Barles and Panagiotis~E. Souganidis, \emph{Convergence of approximation
  schemes for fully nonlinear second order equations}, Asymptotic Anal.
  \textbf{4} (1991), no.~3, 271--283. \MR{92d:35137}

\bibitem[CIL92]{CIL}
Michael~G. Crandall, Hitoshi Ishii, and Pierre-Louis Lions, \emph{User's guide
  to viscosity solutions of second order partial differential equations}, Bull.
  Amer. Math. Soc. (N.S.) \textbf{27} (1992), no.~1, 1--67. \MR{92j:35050}

\bibitem[CT97]{Coorg1997}
Satyan Coorg and Seth Teller, \emph{Real-time occlusion culling for models with
  large occluders}, Proceedings of the 1997 Symposium on Interactive 3D
  Graphics (New York, NY, USA), I3D '97, ACM, 1997, pp.~83--ff.

\bibitem[CT05]{TsaiVisibilityExtensions}
Li-Tien Cheng and Yen-Hsi Tsai, \emph{Visibility optimization using variational
  approaches}, Commun. Math. Sci. \textbf{3} (2005), no.~3, 425--451.

\bibitem[DDTP00]{Durand2000}
Fr\'edo Durand, George Drettakis, Jo\"elle Thollot, and Claude Puech,
  \emph{Conservative visibility preprocessing using extended projections},
  Proceedings of SIGGRAPH 2000 (July 2000), Held in New Orleans, Louisiana.

\bibitem[HZ00]{VisualizationReference}
Aaron Hertzmann and Denis Zorin, \emph{Illustrating smooth surfaces}, 2000.

\bibitem[Koi04]{KoikeViscosity}
Shigeaki Koike, \emph{A beginner's guide to the theory of viscosity solutions},
  MSJ Memoirs, vol.~13, Mathematical Society of Japan, Tokyo, 2004. \MR{2084272
  (2005d:35002)}

\bibitem[KT08]{TsaiPDEProof}
Chiu-Yen Kao and Richard Tsai, \emph{Properties of a level set algorithm for
  the visibility problems}, Journal of Scientific Computing \textbf{35} (2008),
  no.~2, 170--191.

\bibitem[LT18]{TsaiDeepLearning}
Louis Ly and Yen{-}Hsi~Richard Tsai, \emph{Autonomous exploration,
  reconstruction, and surveillance of 3d environments aided by deep learning},
  CoRR \textbf{abs/1809.06025} (2018).

\bibitem[LTC06]{LandaTsaiVisibilityPointClouds}
Yanina Landa, Richard Tsai, and Li-Tien Cheng, \emph{Visibility of point clouds
  and mapping of unknown environments}, Advanced Concepts for Intelligent
  Vision Systems (Berlin, Heidelberg) (Jacques Blanc-Talon, Wilfried Philips,
  Dan Popescu, and Paul Scheunders, eds.), Springer Berlin Heidelberg, 2006,
  pp.~1014--1025.

\bibitem[Obe06]{ObermanSINUM}
Adam~M. Oberman, \emph{Convergent difference schemes for degenerate elliptic
  and parabolic equations: {H}amilton-{J}acobi equations and free boundary
  problems}, SIAM J. Numer. Anal. \textbf{44} (2006), no.~2, 879--895
  (electronic). \MR{MR2218974 (2007a:65173)}

\bibitem[OS88]{OSnum}
Stanley Osher and James~A. Sethian, \emph{Fronts propagating with
  curvature-dependent speed: algorithms based on {H}amilton-{J}acobi
  formulations}, J. Comput. Phys. \textbf{79} (1988), no.~1, 12--49.

\bibitem[SA97]{EtchingReference}
J.~A. Sethian and D.~Adalsteinsson, \emph{An overview of level set methods for
  etching, deposition, and lithography development}, IEEE Transactions on
  Semiconductor Manufacturing \textbf{10} (1997), no.~1, 167--184.

\bibitem[Set99]{SethianBook}
J.~A. Sethian, \emph{Level set methods and fast marching methods}, second ed.,
  Cambridge Monographs on Applied and Computational Mathematics, vol.~3,
  Cambridge University Press, Cambridge, 1999, Evolving interfaces in
  computational geometry, fluid mechanics, computer vision, and materials
  science. \MR{MR1700751 (2000c:65015)}

\bibitem[TCO{\etalchar{+}}04]{TsaiVisibilityPDE}
Y.-H.~R. Tsai, L.-T. Cheng, S.~Osher, P.~Burchard, and G.~Sapiro,
  \emph{Visibility and its dynamics in a pde based implicit framework}, J.
  Comput. Phys. \textbf{199} (2004), no.~1, 260--290.

\bibitem[VTS14]{TsaiInformationVisibility}
Luca Valente, Yen-Hsi~R. Tsai, and Stefano Soatto, \emph{Information-seeking
  control under visibility-based uncertainty}, J. Math. Imaging Vis.
  \textbf{48} (2014), no.~2, 339--358.

\end{thebibliography}

\end{document}